\newtheorem{theorem}{Theorem}[section]
\newtheorem{corollary}[theorem]{Corollary}
\newtheorem{lemma}[theorem]{Lemma}
\theoremstyle{definition}
\newtheorem{definition}[theorem]{Definition}
\theoremstyle{remark}
\newcommand{\ol}{\overline}
\def\scp{\mathcal{SCP}}
\def\M{\mathcal{M}}
\def\H{\mathcal{H}}
\def\C{\mathbb{C}}
\def\Dbar{\overline{\mathbb{D}}}
\def\Cbar{\overline{\mathbb{C}}}
\def\D{\mathbb{D}}
\def\sm{\setminus}
\def\lam{\mathcal{L}}
\def\uc{\mathbb{S}^1}
\def\gcd{\mathrm{GCD}}
\def\mo{\mathrm{mod}}
\def\co{\ol{\mathrm{co}}}
\def\Q{\mathbb{Q}}
\def\pr{\mathfrak{pr}}
\newcommand{\R}{\mathbb{R}}
\newcommand{\ch}{\mathrm{CH}}
\newcommand{\thl}{\mathrm{TH}}
\newcommand{\V}{\mathcal{V}}
\newcommand{\cdisk}{\overline{\mathbb{D}}}
\newcommand{\sh}{\mathrm{SH}}
\newcommand{\bc}{\overline{c}}
\newcommand{\hb}{\mathbf{h}}
\newcommand{\al}{\alpha}
\newcommand{\be}{\beta}
\newcommand{\ga}{\gamma}
\newcommand{\ta}{\theta}
\newcommand{\tro}{\tilde\rho}
\newcommand{\si}{\sigma}
\def\Ac{\mathcal{A}}                \def\Cc{\mathcal{C}}
\def\Hc{\mathcal{H}}
\def\Rc{\mathcal{R}}
\def\Cc{\mathcal{C}}
\def\Uc{\mathcal{U}}       \def\Wc{\mathcal{W}}
\def\Tc{\mathcal{T}}
\def\Oc{\mathcal{O}}
\def\Z{\mathbb{Z}}
\def\0{\varnothing}
\def\Rf{\mathfrak{R}}
\title{Symmetric cubic polynomials}
\begin{document}

\date{May 10, 2023}

\author[A.~Blokh]{Alexander Blokh}

\author[L.~Oversteegen]{Lex Oversteegen}

\author[N.~Selinger]{Nikita Selinger}

\author[V.~Timorin]{Vladlen Timorin}

\author[S.~Vejandla]{Sandeep Chowdary Vejandla}

\address[Alexander~Blokh, Lex~Oversteegen, Nikita Selinger, and Sandeep Chowdary Vejandla]
{Department of Mathematics\\ University of Alabama at Birmingham\\
Birmingham, AL 35294}

\address[Vladlen~Timorin]
{Faculty of Mathematics\\
National Research University Higher School of Economics\\
6 Usacheva str., Moscow, Russia, 119048}

\email[Alexander~Blokh]{ablokh@math.uab.edu}
\email[Lex~Oversteegen]{overstee@uab.edu}
\email[Nikita~Selinger]{selinger@uab.edu}
\email[Sandeep-Vejandla]{vsc4u@uab.edu}
\email[Vladlen~Timorin]{vtimorin@hse.ru}

\thanks{The second named author was partially
supported by NSF grant DMS--1807558.
The work of the fourth named author was supported by
 the Russian Science Foundation under grant no. 22-11-00177.}

\subjclass[2010]{Primary 37F20; Secondary 37F10, 37F50}

\keywords{Complex dynamics; laminations; Mandelbrot set; Julia set}

\begin{abstract}
We describe a model $\M_3^{comb}$ for the boundary of the connectedness locus $\M^{sy}_3$ of the parameter space
of cubic symmetric polynomials $p_c(z)=z^3-3c^2z$. We show that there exists a monotone continuous function $\pi:\partial \M_c^{sy}\to \M_3^{comb}$
which is a homeomorphism if $\M^{sy}_3$ is locally connected.
\end{abstract}

\maketitle

\section{Introduction}

A central problem of Complex Dynamics is to describe parameter spaces of holomorphic maps. Investigating the deceptively simple
\emph{quadratic family} $f_c(z)=z^2+c$ led to an explosion of activity in the field. Aided by
computer graphics capabilities, mathematicians were able to
 make and justify many interesting discoveries concerning
the connected\-ness locus of the quadratic family, the celebrated \emph{Man\-del\-brot set} $\M_2$.

One of the first such results by Douady and Hubbard \cite{DH} was 
 that 
$\M_2$ is connected.
Later results on the combinatorial description of the structure of the Mandelbrot set were largely carried out in the language
of 
\emph{laminations} introduced by Thurston \cite{Th} (see Section \ref{s:lami} for precise definitions
and other details).
Douady constructed a topological \emph{pinched disk} model 
 of $\M_2$; Thurston made this model more explicit and described it in terms of laminations.
If $\M_2$ is locally connected (which is still an open conjecture), then it is homeomorphic to its model.
The local connectivity of the Mandelbrot set is one of the most important long standing conjectures in the field; if true, it will imply 
a more profound density of hyperbolicity 
property of the quadratic family.

Here is a quick reminder of what laminations are, and how they produce pinched disk models.
For any compact, connected, locally connected, and full subset $K$ of the complex plane $\C$,
a Riemann map $\psi \colon \Cbar \sm \Dbar \to \Cbar \sm K$ can be used to obtain a
family of external rays (the images of rays $\arg(z)=\text{const}$) and equipotentials (the images
of circles $|z|=\text{const}$); here $\D$ is the open unit disk and $\Cbar$ is the complex sphere. Since $K$ is assumed to be locally
connected, every external ray lands on a point of $K$ by Caratheodory's theorem, and the landing point
depends continuously on the ray's angle. We can define an equivalence relation by declaring two external
rays (or arguments thereof) equivalent if they land on the same point.
The edges 
(\emph{leaves})
of convex hulls of equivalence classes (finite \emph{gaps} or stand alone leaves) in the closed unit disk form a \emph{lamination} $\lam_K$.
It is easy to see that $K$ can be reconstructed by starting with $\lam_K$ and collapsing convex hulls of classes of the just
defined equivalence relation to points (in particular, one has to collapse every finite gap or stand alone leaf to a point).
Informally this process can be called ``pinching the unit disk''.


\begin{figure}
   \centering
        \includegraphics[height=6.2cm]{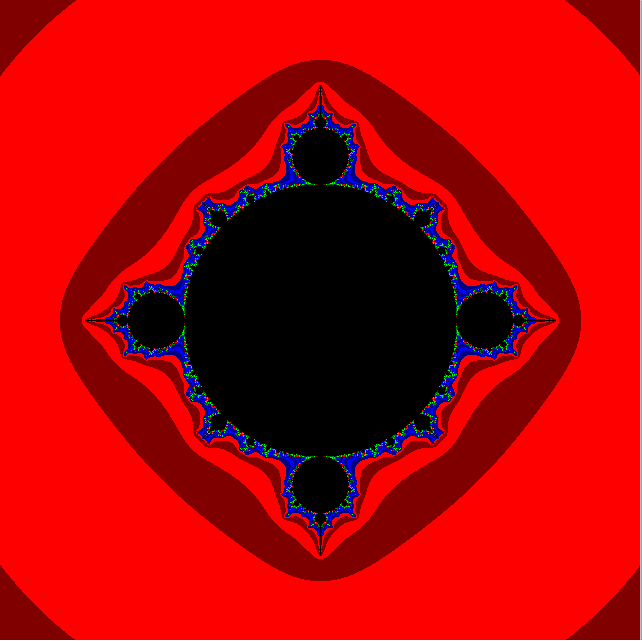}
        \includegraphics[height=6.2cm]{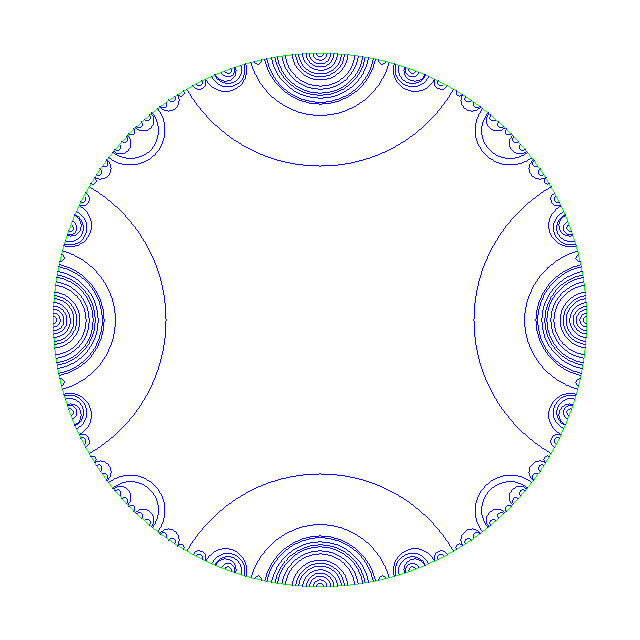}
     \caption{The parameter space of symmetric cubic  polynomials $\mathcal{SCP}$ on the left and the Symmetric Cubic Comajor Lamination $C_sCL$ on the right.}
\end{figure}

Now 
 assume that
$K$ 
is a connected and locally connected Julia set of a monic polynomial.
We can then choose a Riemann map $\psi$ as above so that $\psi(\infty)=\infty$ and $\psi'(\infty)=1$.
Everything said above applies; 
 additionally, the obtained lamination is 
 invariant with respect to the angle $d$-tupling map (multiplication of the angles by $d$),
 where $d$ is the degree of the polynomial.
Indeed, the map $\psi$ conjugates the given polynomial to $z \mapsto z^d$ on $\Cbar \sm K$.

Thus, the study of parameter spaces of polynomials is deeply related to the study of the spaces of possible laminations
of these polynomials. In \cite{Th}, a thorough description of the space of quadratic laminations was used to produce a
topological model of the Mandelbrot set $\M_2$. In the prequel to this article \cite{paper1,paper2}, we investigated
the space of \emph{symmetric cubic laminations} and constructed the \emph{cubic symmetric comajor lamination} $C_sCL$
that parameterizes them (see Section \ref{s:lami}). In the present
article, we show that $\Dbar / C_sCL$ is a monotone model for the connectedness locus $\M^{sy}_3$ in
the space $\scp$ of \emph{symmetric cubic polynomials} $p_c(z)=z^3-3c^2z$ with \emph{marked critical point} $c$.

\subsection*{Acknowledgments.} The figures in the article have been produced with a modified version of \emph{Mandel}, a software written by Wolf Jung.

\section{Notation and preliminaries}

We assume knowledge of basic facts and concepts of complex dynamics.
We also use standard notation (such as $J(P)$ for the Julia set of a polynomial $P$, etc).

Consider the space $\scp$ of symmetric cubic polynomials $p_c(z)=z^3-3c^2z$ with marked critical point $c$.
Every polynomial (except for $p_0(z)=z^3$) shows in $\scp$ twice, first with $c$ as the marked critical point,
and second with $-c$ as the marked critical point; strictly speaking we are considering the space of pairs
(a polynomial, a critical point). 
Thus, this space is a (branched) two-to-one cover of the moduli space of all odd cubic polynomials, where the moduli
space means the quotient space with respect to complex linear conjugacy. The critical
points of $p_c$ are $c$ and $-c$, the corresponding critical values are $- 2 c^3$ and $2c^3$. The \emph{marked cocritical point} of $p_c$,
i.e. the other preimage of $-2c^3=p_c(c)$, is $-2c$. 
Call sets $A$ and $B$ \emph{mutually symmetric} if $B=-A$. If $A=-A$ we call a set $A$
\emph{symmetric}. Since $p_c$ is odd, the Julia set $J(p_c),$ the filled Julia set $K(p_c)$, and their complements are symmetric.
Observe that $p_c(0)=0$ and $p'_c(z)=p'_c(-z)=3z^2-3c^2$.

Let $\M^{sy}_3$ be the \emph{connectedness locus} of $\scp$, i.e. the set of all $c$ for which the Julia set of $p_c$  is connected.
It is known that the Julia set of $p_c$ is connected if and only if all forward orbits of critical points of $p_c$ are bounded.
Since $p_c$ has 
 mutually symmetric critical orbits, we conclude that 
$c \in \M^{sy}_3$ if and only if the orbit of $c$ or, equivalently of $-2c$ or $2c$, is bounded.

For any $r>0$ set $\D_r=\{z\in\C\,|\,|z|<r\}$, and write $\D$ for $\D_1$.
Let $\uc$ be the unit circle.
For a set $A\subset \C$, let $\overline{A}$ be its closure and $\partial A$ be its boundary.
We use the terms \emph{periodic orbit} and \emph{cycle} interchangeably. External rays to the Julia set of a polynomial $P$ are denoted
$R_\ta(P)$ where $\ta$ is the argument of the ray (if there is no ambiguity we may omit the polynomial from our notation).

Let $X$, $Y$ be topological spaces and $f:X\rightarrow Y$
be continuous. Then $f$ is said to be {\em monotone} if $f^{-1}(y)$ is
connected for each $y \in Y$.
It is known that if $f$ is monotone and $X$ is a continuum then $f^{-1}(Z)$
is connected for every connected $Z\subset f(X)$.

\section{Symmetric cubic laminations} 
\label{s:lami}
Invariant laminations were introduced in \cite{Th}; they play a major role in polynomial dynamics.
The two preceding papers \cite{paper1, paper2} of this series contain an overview,
 which relies upon \cite{Th} and 
 \cite{bmov13}.
The reader is advised to consult Section 2 of \cite{paper1} for more detailed discussion.

If a monic polynomial $P$ has a locally connected Julia set $J(P)$, then $P|_{J(P)}$
 is topologically conjugate to a suitable quotient of the $d$-tupling map $\si_d:\uc\to\uc$
 (where $\si_d(z)=z^d$ if $\uc$ is viewed as the unit circle in $\C$ and as
 $\si_d(\ta)=d\ta$ if $\uc$ is identified with $\R/\Z$).
The quotient is with respect to an equivalence relation $\sim_P$;
 the \emph{leaves} of the corresponding lamination $\lam_P$ are by definition the edges of
 the convex hulls of all $\sim_P$-classes.

A chord of $\cdisk$ with endpoints $a, b$ is denoted by $\ol{ab}$; it is \emph{critical} if
$\si_d(a)=\si_d(b)$ while $a\ne b$.
A lamination is normally denoted by $\lam$ while the union of all its leaves and $\uc$ by $\lam^*$.
A {\em gap} $G$ of a lamination $\lam$ is the closure of a component of
$\D\sm\lam^{\ast}$. A gap $G$ is said to be
\emph{critical} if its image is not a gap or the degree of $\si_d|_{\partial G}$ is greater than 1.
A \emph{critical set} is a critical leaf or a critical gap. For $A\subset \overline{\D}$, denote $A \cap \uc$ by $\V(A)$
and call the elements of $\V(A)$ \emph{vertices} of $G$.
If $G$ is a leaf or a gap of $\lam$, then $G$ coincides with the convex hull of
$\V(G)$. A gap $G$ is called \emph{infinite (finite)} if and only if $\V(G)$ is
infinite (finite).

Let $\lam$ be a lamination. The equivalence relation $\sim_\lam$ induced by
$\lam$ is defined by declaring that $x\sim_\lam y$ if and only if there
exists a finite concatenation of leaves of $\lam$ joining $x$ to $y$.
A lamination $\lam$ is called a
\emph{q-lamination} if the equivalence relation $\sim_\lam$ is laminational
and $\lam$ consists of the edges of the convex hulls of $\sim_\lam$-classes.
Two distinct chords are called
($\si_d$-)\emph{siblings} if they have the same $\si_d$-image.

\subsection{Symmetric laminations}\label{ss:symlam}
Here we follow \cite{paper1} and \cite{paper2}. Let $\tau$ be the rotation of
$\cdisk$ (or of $\uc$) by $180^{\degree}$ around its center $\Oc$. Also, given
a map $f:X\to X$ we call $x\in X$ \emph{preperiodic of preperiod $k>0$} or
\emph{$k$-preperiodic} if $f^k(x)$
is $f$-periodic while $x, f(x), \dots, f^{k-1}(x)$ are not periodic.

\begin{definition}[Symmetric laminations]\label{cs-lam} A $\sigma_3$-invariant
lamination $\lam$ is called a \emph{symmetric (cubic) lamination} if
$\ell\in \lam$ implies $\tau(\ell)\in \lam$.
\end{definition}{}

\begin{definition}[length and majors]
Given a non-diameter chord $\ell$ in $\ol{\D}$, define the arc $h(\ell)$ as the shortest
arc of $\uc$ joining the endpoints of $\ell$. If $\ell'$ and $\ell''$ are chords such that
$h(\ell')\subset h(\ell'')$, then we say that $\ell'$ is \emph{under} $\ell''$.
Define the \emph{length} $|\ell|$ of $\ell$
as the length of $h(\ell)$ divided by $2\pi$ in the case when $\ell$ is not a diameter; if $\ell$ is a diameter,
set $|\ell|=\frac12$. Given a symmetric lamination $\lam$, call a leaf $M$ a \emph{major of $\lam$} if there are no leaves
of $\lam$ closer in length to $\frac13$ than $M$.
\end{definition}

Let $\Gamma:[0, \frac12]\to [0, \frac12]$ be a piecewise linear function with slope $\pm 3$ defined as $\Gamma(x)=3x$
if $0\le x\le \frac16$ and as $\Gamma(x)=|3x-1|$ if $\frac16\le x\le \frac12$. Then $|\si_3(\ell)|=\Gamma(|\ell|)$. Simple analysis
of the dynamics of $\Gamma$ shows that for any leaf $\ell$ an eventual image of $\ell$ has length $0$, or length $\frac12$, or length
which is between $\frac14$ and $\frac{5}{12}$.


Suppose that $\frac14\le |\ell|\le \frac{5}{12}$. Then there exists a sibling chord $\ell'$ of $\ell$ such that
the strip $S$ of $\cdisk$ between $\ell$ and $\ell'$ has two circle arcs on its boundary, and each arc is at most $\frac16$ long.
We   also consider chords $\tau(\ell)$ and $\tau(\ell')$ as well as the $\cdisk$-strip $\tau(S)$ between them.
The union $S\cup \tau(S)$, denoted $\sh(\ell)$, is called the \emph{short strips set} of $\ell$.

A major $M$ of a symmetric lamination $\lam$ can be critical; in this case there is a unique point $x\in \uc$
that is not an endpoint of $M$ but has the same $\si_3$-image as $M$. This point $x$ is called a \emph{comajor (of $\lam$)}. If $M$ is not
critical, then a leaf $M'$ (similar to $\ell'$ from the above) and leaves $\tau(M)$ and $\tau(M')$ are also majors of $\lam$.
Then we set $\sh(\lam)=\sh(M)$ and call this set the \emph{short strips set} of $\lam$.
The third sibling $\bc$ of $M$ that is disjoint from $M\cup M'$, is of length at most $\frac16$. It is called a \emph{comajor (of $\lam$)}.
Similarly we define a \emph{cocritical set} $\co(U)$ of a critical set $U$ as the gap, or the leaf, or the point disjoint from $U$
but with the same image as $U$.

Note that the two majors  $M_{\bc}, M'_{\bc}$ can be easily constructed from a comajor $\bc$
Because of the symmetry, comajors, majors, etc., come in pairs.
A pair of comajors $\bc, \tau(\bc)$ of a symmetric
lamination $\lam$ is called a symmetric \emph{comajor pair}. It is \emph{degenerate} if its elements are points
and \emph{non-degenerate} otherwise. Considering a symmetric lamination $\lam$ we often assume that one
of its majors is marked; we denote this major by $M_\lam$ and denote the corresponding comajor by $\co_\lam$.
Observe that if $\lam_1$ and $\lam_2$ are symmetric laminations such that $\sh(\lam_1)\subset \sh(\lam_2)$
(e.g., if $\lam_2\subset \lam_1$) then the comajors of $\lam_1$ are located under the comajors of $\lam_2$.

\begin{lemma}[\cite{paper1}]\label{l:sh}
Let $\ell$ be a leaf of a symmetric lamination $\lam$ with $|\ell|\ge \frac14$.
If 
$\si_3^n(\ell)\subset \sh(\ell)$ for some $n>0$, then, for the smallest such $n$, the leaf $\si_3^n(\ell)$ 
 non-strictly separates (in $\cdisk$) either $\ell$ from $\ell',$ or $\tau(\ell)$ from $\tau(\ell')$. Thus, either
$\si_3^n(\ell)$ equals one of the leaves $\ell, \ell', \tau(\ell), \tau(\ell'),$ or it is closer to $\frac13$ in length
than $\ell$. In particular, forward images of  majors/comajors of 
 $\lam$ never enter the open circle
arcs on the boundary of the set $\sh(\lam)$.
\end{lemma}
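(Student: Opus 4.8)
I would start by transferring everything to the single strip $S$ between $\ell$ and its sibling $\ell'$. Let $\delta\ge 0$ be the distance from $|\ell|$ to $\tfrac13$, so $\delta\le\tfrac1{12}$ by hypothesis. Unwinding $\sh(\ell)=S\cup\tau(S)$, one checks that $S$ is a quadrilateral whose vertices are the endpoints of $\ell$ and of $\ell'$, whose two ``long'' sides are the leaves $\ell$ and $\ell'$ themselves --- with $|\ell|=\tfrac13+\delta$ and $|\ell'|=\tfrac13-\delta$ after possibly renaming, so $\ell$ and $\ell'$ lie at the same distance $\delta$ from $\tfrac13$ on opposite sides of it --- and whose remaining two sides are circle arcs $A_1,A_2\subset\uc$ of length $\delta$; moreover $\ell$ and $\ell'$ are $\si_3$-siblings, so $\si_3(\ell)=\si_3(\ell')=:m$, and $\si_3$ maps each of $A_1,A_2$ homeomorphically onto the short arc $h(m)$. (That $\ell'$ is itself a leaf of $\lam$ is automatic, since $\ell'$ is one of the three $\si_3$-siblings of $\ell$ and $\lam$ is backward invariant; likewise $\tau(\ell),\tau(\ell')$.) Since $S$ and $\tau(S)$ have disjoint closures and $\si_3^n(\ell)$ is a single chord, it lies in one of them; by the $\tau$-symmetry of the configuration --- $\tau(S)$ being the strip of $\tau(\ell),\tau(\ell')$ --- I may assume $\si_3^n(\ell)\subset S$.

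The crux is the claim that \emph{$\si_3^n(\ell)$ joins a point of $\overline{A_1}$ to a point of $\overline{A_2}$}, i.e. it ``crosses'' $S$. Granting this, $\si_3^n(\ell)$ cuts $S$, hence $\cdisk$, into a piece containing $\ell$ and a piece containing $\ell'$, the separation being only non-strict because $\si_3^n(\ell)$ may share a vertex with, or coincide with, $\ell$ or $\ell'$. For the length statement: a chord in $S$ joining $a\in\overline{A_1}$ to $b\in\overline{A_2}$ has length equal to the length of the arc it subtends on the side of $A_1$ and $A_2$, which is $|\ell'|$ plus the two arc-distances from $a$ and $b$ to the corresponding endpoints of $\ell'$; this quantity ranges over the whole interval $[\,|\ell'|,|\ell|\,]$, attaining $|\ell|$ only when $\si_3^n(\ell)=\ell$ and $|\ell'|$ only when $\si_3^n(\ell)=\ell'$. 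Hence either $\si_3^n(\ell)\in\{\ell,\ell'\}$ or $|\si_3^n(\ell)|\in(\,|\ell'|,|\ell|\,)=(\tfrac13-\delta,\tfrac13+\delta)$, and this open interval is precisely the set of lengths strictly closer to $\tfrac13$ than $|\ell|$. (If instead $\si_3^n(\ell)\subset\tau(S)$, replace $\ell,\ell'$ by $\tau(\ell),\tau(\ell')$; the length conclusion is unchanged since $|\tau(\ell)|=|\ell|$.) So the whole displayed statement reduces to the claim.

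To prove the claim I would exploit the minimality of $n$. Suppose it fails; by symmetry both endpoints of $\si_3^n(\ell)$ then lie in $\overline{A_1}$, so $\si_3^n(\ell)$ is a short chord, of length at most $\delta$, lying over $A_1$. A short computation with $|m|=\Gamma(|\ell|)=3\delta$ disposes of $n=1$: either $\delta<\tfrac1{12}$, and then $m=\si_3(\ell)$ is too long to fit in $\overline{A_1}$ or $\overline{A_2}$ yet too short to cross $S$, so $m\not\subset\sh(\ell)$; or $\delta=\tfrac1{12}$ and $\si_3(\ell)=\ell'$, for which the conclusion already holds. So $n\ge 2$, and $c:=\si_3^{n-1}(\ell)$ is a leaf of $\lam$ with $\si_3(c)=\si_3^n(\ell)$; by backward invariance, $c$ belongs to a three-element sibling collection over $\si_3^n(\ell)$. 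As $\si_3^n(\ell)$ is short and noncritical, its six $\si_3$-preimages form three tiny clusters of two points, and in any admissible sibling collection each chord is either again ``short'' (length $|\si_3^n(\ell)|/3$) or of length within $|\si_3^n(\ell)|/3$ of $\tfrac13$. I would then argue by cases on $c$. If $c$ is one of the near-$\tfrac13$ chords, inspecting its location shows it either crosses one of the leaves $\ell,\ell'$ of $\lam$ --- impossible, since leaves of a lamination do not cross --- or lies inside $S$, hence in $\sh(\ell)$, contradicting the minimality of $n$. Thus $c$ is a short chord; iterating the pullback, each step on the ``short'' branch divides the length by $3$, so if no fork to a near-$\tfrac13$ chord ever occurred we would reach $|\ell|=|\si_3^0(\ell)|\le|\si_3^n(\ell)|\cdot 3^{-n}\le\tfrac1{12}\cdot 3^{-n}<\tfrac14$, against $|\ell|\ge\tfrac14$. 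Hence a fork must occur. Making this last step airtight --- showing that whenever the descent forks, the resulting near-$\tfrac13$ leaf is genuinely forced to cross $\ell$ or $\ell'$ or to fall into $S$ --- is the delicate point, and is best organized as an induction on $n$, with the analysis of the first pullback above serving as the inductive step.

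Finally, the ``in particular'' follows formally. For a major $M$ no leaf of $\lam$ is strictly closer in length to $\tfrac13$ than $M$, so the dichotomy just proved forces $\si_3^{n_0}(M)\in\{M,M',\tau(M),\tau(M')\}$ for the first $n_0>0$ with $\si_3^{n_0}(M)\subset\sh(M)$; in particular the orbit of $M$ is finite. Now if some $\si_3^n(M)$ with $n\ge 1$ had an endpoint in one of the four open boundary arcs of $\sh(\lam)=\sh(M)$, then, since $\si_3^n(M)$ cannot cross the leaves $M,M',\tau(M),\tau(M')$, it would be forced to lie in $S$ or $\tau(S)$, so $\si_3^n(M)\subset\sh(M)$ and $n\ge n_0$; using $\si_3^{n_0}(M)\in\{M,M',\tau(M),\tau(M')\}$ together with $\si_3\circ\tau=\tau\circ\si_3$ and $\si_3(M')=\si_3(M)$, one rewrites $\si_3^n(M)$ as $\si_3^k(M)$ or $\tau(\si_3^k(M))$ with $k=n-n_0<n$, which closes an induction on $n$ (base case: $\si_3(M)$ avoids the open arcs, a direct check). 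A comajor $\bc$ satisfies $\si_3(\bc)=\si_3(M)$, so $\si_3^n(\bc)=\si_3^n(M)$ for $n\ge 1$, and the comajor case reduces to the major one.
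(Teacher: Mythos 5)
First, a caveat: the present paper does not prove Lemma \ref{l:sh} at all --- it is imported from \cite{paper1} --- so there is no in-text proof to compare yours against. Judged on its own terms, your strategy (first-entry time, dichotomy ``crosses the strip'' versus ``short chord over one boundary arc'', then a pullback descent in which each step either stays short or forks to a near-$\frac13$ chord, with the length bound $|\ell|\ge\frac14$ killing the all-short branch) is the right one, and your reduction of the length statement to the crossing claim is correct. The delicate step you flag is genuinely fillable, and here is the cleanest way to close it: at a fork, the chord $\si_3^{n-k-1}(\ell)$ joins two of the three preimage clusters, so its length lies within $\delta/3^{k+1}\le\delta/3$ of $\frac13$; among the complementary regions of the six chords $\ell,\ell',\bc,\tau(\ell),\tau(\ell'),\tau(\bc)$, only $S$ and $\tau(S)$ have boundary circle arcs that can subtend a chord of such length (the central gap forces length at least $\frac13+\delta$, and every other complementary region forces length at most $\frac13-\delta$ or at most $\frac16-\delta$); hence a forked chord that crosses no leaf must land in $S\cup\tau(S)$ and cross the strip, contradicting minimality of $n$, since $|\ell|$ itself is at distance exactly $\delta>\delta/3$ from $\frac13$ and so cannot be the forked chord. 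Note, though, that your case list must allow crossings with $\tau(\ell)$, $\tau(\ell')$, $\bc$, $\tau(\bc)$ and landings in $\tau(S)$, not only with $\ell$, $\ell'$, $S$: already at the second pullback the relevant collision is with $\tau(\ell')$ or $\tau(S)$, depending on the size of $\delta$.

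The one genuine gap is your parenthetical claim that $\ell'$ (and, implicitly, $\bc$ and the $\tau$-images) are automatically leaves of $\lam$ ``by backward invariance''. Sibling invariance only guarantees that $\ell$ extends to \emph{some} collection of three pairwise disjoint leaves with the same image; $\ell$ has eight sibling chords and most of them need not be leaves. When $|\ell|>\frac13$ the disjoint completion is unique and equals $\{\ell,\ell',\bc\}$, so your claim holds there; but when $\frac14\le|\ell|<\frac13$ there are two admissible completions, and the one realized in $\lam$ need not contain the long sibling $\ell'$ that bounds the short strip. Since your contradictions repeatedly invoke ``crosses $\ell'$ (or $\bc$), impossible for leaves of a lamination'', this case needs a separate argument --- or the lemma should be applied only to leaves whose full strip-bounding sibling collection is known to lie in $\lam$, as it is for majors, which is the only situation in which the present paper uses the statement.
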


Lemma \ref{l:sh} 
 motivates the next definition.

\begin{definition}[Legal pairs]\label{d:legal} If a symmetric pair
$\{\bc,\tau(\bc)\}$ is either degenerate or satisfies the following conditions:

\begin{enumerate}
    \item[(a)] no two iterated forward images of $\bc, \tau(\bc)$ cross, and

    \item[(b)] no forward image of $\bc$ crosses the interior of $\sh(M_{\bc})$,
\end{enumerate}

\noindent then $\{\bc, \tau(\bc)\}$ is said to be a \emph{legal pair}.

\end{definition}


\begin{lemma}
[\cite{paper1}]
\label{pull-back-lam1}
A legal pair $\{c,\tau(c)\}$ is the comajor pair of the symmetric lamination
$\lam(c)$. A symmetric pair $\{c, \tau(c)\}$ is a comajor pair if and only if it
is legal.
\end{lemma}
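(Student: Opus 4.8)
The plan is to prove the two halves of Lemma~\ref{pull-back-lam1} in sequence, leaning on Lemma~\ref{l:sh} and on the pull-back construction of laminations from \cite{paper1}. First I would recall the construction $\lam \mapsto \lam(c)$: given a symmetric pair $\{c,\tau(c)\}$, form the two pairs of majors $M_c, M'_c$ and $\tau(M_c), \tau(M'_c)$ determined by $c$ and $\tau(c)$ (these exist and are essentially unique by the analysis of $\Gamma$ and the discussion preceding Lemma~\ref{l:sh}), take all their forward images, and then pull back under $\si_3$ in the symmetric fashion, at each stage inserting sibling collections that respect $\tau$ and do not cross previously inserted leaves. The output $\lam(c)$ is the smallest symmetric $\si_3$-invariant lamination containing $c$ and $\tau(c)$ as leaves, and by construction $c,\tau(c)$ are its shortest leaves, i.e.\ its comajors. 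The one thing that has to be checked for this construction to even make sense is that the pull-back never forces a crossing — and this is exactly where legality enters.

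For the forward implication (a comajor pair is legal): if $\{c,\tau(c)\}$ is the comajor pair of an honest symmetric lamination $\lam$, then $c,\tau(c)$ and all their forward images are leaves of $\lam$, hence pairwise non-crossing by (L1); that gives condition~(a) of Definition~\ref{d:legal} immediately. Condition~(b) — that no forward image of $c$ crosses the interior of $\sh(M_c)$ — is precisely the last sentence of Lemma~\ref{l:sh}: the forward orbit of a comajor of $\lam$ never enters the open circle arcs bounding $\sh(\lam)=\sh(M_c)$, and since any leaf entering the interior of $\sh(M_c)$ would either have an endpoint in one of those arcs or separate $M_c$ from $M'_c$ (again pushed closer to length $\tfrac13$, contradicting that $M_c$ is a major), we are done. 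Here I would be slightly careful to distinguish ``crossing the interior of the short strip'' from ``having an endpoint on the boundary arc'', but both are ruled out by the same separation argument in Lemma~\ref{l:sh}.

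For the reverse implication (a legal pair is a comajor pair): given a legal pair $\{c,\tau(c)\}$, I would run the pull-back construction above and argue it succeeds. The potential obstructions to the pull-back are crossings, and there are two sources: crossings among forward images of the initially inserted leaves, and crossings introduced when choosing sibling collections during pull-back. The first is killed by condition~(a) together with the observation that a crossing among images of $M_c$-type leaves would descend to a crossing among images of $c$-type leaves (siblings of a non-crossing family can be chosen disjoint, by the sibling-collection mechanism). The second is controlled by condition~(b): the only place the pull-back could be forced into a crossing is inside the critical region, i.e.\ inside $\sh(M_c)$, and (b) says the forward orbit stays out of there, so at each stage there is room to insert a disjoint sibling collection compatible with $\tau$. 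Once the lamination $\lam(c)$ is built, one checks $c,\tau(c)$ are its comajors: they are its shortest leaves because any shorter leaf would have to come from the pull-back inside $\sh(M_c)$, which is empty of orbit points by (b). The main obstacle, and the step I would spend the most care on, is this verification that the symmetric pull-back is never obstructed — i.e.\ that legality is not just necessary but genuinely sufficient to carry out every stage of the inductive construction while maintaining (L1), $\si_3$-invariance, and $\tau$-symmetry simultaneously; this is the technical heart and is presumably where the bulk of the argument from \cite{paper1} is invoked.
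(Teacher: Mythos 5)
This lemma is quoted from \cite{paper1}; the present paper gives no proof of it, so there is nothing internal to compare against, and your proposal should be judged as a reconstruction of the argument in the prequel. Your overall route --- the symmetric pull-back construction of $\lam(c)$ for sufficiency, and Lemma~\ref{l:sh} plus property (L1) for necessity --- is the expected one, and the forward implication is essentially right: images of $c$ from the first step on coincide with images of $M_c$, so a forward image meeting the interior of $\sh(M_c)$ must either cross one of the four majors (impossible in a lamination), land an endpoint in a boundary arc (excluded by the last sentence of Lemma~\ref{l:sh}), or separate $M_c$ from $M_c'$ and hence be closer in length to $\frac13$ than a major (excluded by the definition of major).

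There is, however, a concrete error in your final identification step: you assert twice that $c$ and $\tau(c)$ are the comajors of $\lam(c)$ because they are its \emph{shortest} leaves. Comajors are not the shortest leaves of a lamination. A comajor has length up to $\frac16$, while the iterated $\si_3$-pullbacks of $c$ that you insert during the construction have lengths tending to $0$, so $\lam(c)$ contains leaves far shorter than $c$ whenever $c$ is non-degenerate. The correct identification goes through the definition in Section~\ref{ss:symlam}: one must show that $M_c$, $M_c'$, $\tau(M_c)$, $\tau(M_c')$ are the \emph{majors} of $\lam(c)$, i.e.\ that no leaf of $\lam(c)$ is closer in length to $\frac13$ than $M_c$ (this is where condition (b) and the length dynamics of $\Gamma$ via Lemma~\ref{l:sh} are used), and then recognize $c$ as the third sibling of $M_c$ disjoint from $M_c\cup M_c'$. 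As written, your criterion would fail the moment you tried to use it. Beyond that, the genuine technical content --- that legality guarantees every stage of the $\tau$-symmetric sibling-collection pull-back can be completed without crossings --- is acknowledged but deferred to \cite{paper1}, which is acceptable for a cited lemma but means the sufficiency direction is a plan rather than a proof.
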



A symmetric lamination with an infinite gap such that the map $\si_3$ on it is of degree greater
than 1 is called a \emph{Fatou lamination}.

\begin{lemma}[\cite{paper1}]\label{l:pre1}
A symmetric lamination is Fatou if and only if it
has a preperiodic comajor of preperiod 1.
\end{lemma}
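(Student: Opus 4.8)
The plan is to prove Lemma \ref{l:pre1} by analyzing the dynamics of the comajor $\bc$ of a symmetric lamination $\lam$ and relating the existence of an infinite gap of high degree to the combinatorial position of the critical sets. Recall that a symmetric cubic lamination has a symmetric pair of critical sets (the short strips set $\sh(\lam)$ contains both); if $\lam$ is Fatou, one of these critical sets is an infinite gap $G$ on which $\si_3$ has degree $2$ (degree $3$ is impossible because the two critical sets are distinct and mutually symmetric, each absorbing critical multiplicity). So first I would establish the structure: a Fatou lamination has exactly one pair $\{G, \tau(G)\}$ of infinite critical gaps, each of degree $2$, and the comajor $\bc$ is the edge of $G$ (or of $\co(G)$) disjoint from the majors.

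Next I would argue the forward direction. Suppose $\lam$ is Fatou with infinite critical gap $G$ of degree $2$. The image $\si_3(G)$ is again an infinite gap, and since $G$ carries all the local critical behavior on that side, $\si_3^n(G)$ stays infinite for all $n$. The key point is that $\si_3(G)$ must itself be (eventually) periodic or map into the other critical region; but by the symmetry and the degree count, $\si_3(G)$ cannot be critical (the only critical gaps are $G$ and $\tau(G)$, and $\si_3(G)\ne G$ since $G$ has degree $2>1$, while $\si_3(G)=\tau(G)$ would force $\si_3^2(G)=\si_3(\tau(G))=\tau(\si_3(G))=\tau(\tau(G))=G$, i.e.\ $G$ would be a periodic critical gap of period $2$, contradicting the standard fact that a critical Fatou gap of a (symmetric cubic) lamination maps forward onto a periodic gap that is not critical — more precisely one shows $\si_3(G)$ is a non-critical gap). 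Hence $\si_3(G)$ is a non-critical gap, which by the dynamics of $\Gamma$ and Lemma \ref{l:sh} means the comajor $\bc$ (an edge of $G$) satisfies: $\si_3(\bc)$ is an edge of the non-critical gap $\si_3(G)$, and $\si_3(\bc)$ lies on the boundary of a periodic or preperiodic gap. Tracking $\Gamma$ one sees $\si_3(\bc)$ is a major-type leaf whose forward orbit is that of a major; combined with the fact that majors of a Fatou lamination are preperiodic or periodic of the appropriate type, this yields that $\bc$ has preperiod exactly $1$ (it is not periodic since $\bc$ is an edge of the critical gap $G$ and $\si_3(\bc)$ is an edge of the non-critical gap $\si_3(G)$, so $\bc$ cannot return to itself).

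For the converse, suppose the comajor $\bc$ of $\lam$ is preperiodic of preperiod $1$, so $\si_3(\bc)$ is periodic of some period $k\ge 1$ while $\bc$ is not periodic. Then $\si_3(\bc)$ is a leaf lying on the boundary of a periodic gap $H=\si_3(G)$ where $G$ is the critical set containing $\bc$ as an edge. Since $\bc$ is a comajor, $G$ is a critical set; I would rule out $G$ being a critical leaf or a finite critical gap by showing that in those cases $\bc$ would be periodic or its orbit would be finite in a way incompatible with preperiod exactly $1$ (a critical leaf has $|\bc|$ landing under the next major, and iterating $\Gamma$ from the comajor of a finite critical gap forces periodicity of $\bc$ itself, not preperiod $1$). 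Hence $G$ is an infinite critical gap; its degree must exceed $1$ (otherwise $\si_3|_{\partial G}$ is one-to-one and $G$ is not critical after all), so $\lam$ is Fatou by definition. I expect the main obstacle to be the careful bookkeeping in ruling out the alternative cases — especially distinguishing ``preperiod exactly $1$'' from ``periodic'' and from ``preperiod $\ge 2$'' — which requires precise use of Lemma \ref{l:sh} (no forward image of a comajor enters the open boundary arcs of $\sh(\lam)$) together with the explicit dynamics of $\Gamma$ to pin down where $\si_3(\bc)$ sits and why it cannot fail to be periodic when $G$ is the infinite degree-$2$ gap.
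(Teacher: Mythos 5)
This lemma is cited from \cite{paper1}; the present paper gives no proof, so there is nothing internal to compare against. Judged on its own merits, however, your proposal contains two genuine errors.

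First, your forward direction hinges on the claim that $\si_3(G)\ne\tau(G)$ because ``a critical Fatou gap of a symmetric cubic lamination maps forward onto a periodic gap that is not critical.'' This is false. In Lemma~\ref{l:bd}, case (B) with period $2m=2$ is exactly the situation $\si_3(U)=\tau(U)$: the single cycle $\Ac$ consists of the two critical Fatou gaps $U$ and $\tau(U)$, which are both periodic \emph{and} critical. So $\si_3(G)$ can very well be critical, and the argument you build on top of this claim (``$\bc$ cannot return to itself because $\bc$ is an edge of the critical gap $G$ and $\si_3(\bc)$ is an edge of the non-critical gap $\si_3(G)$'') does not go through. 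There is a second confusion compounding this: you repeatedly treat $\bc$ as an edge of the critical gap $G$, but by the definition in Section~\ref{ss:symlam} the comajor $\bc$ is the third sibling of the majors $M,M'$, \emph{disjoint} from $M\cup M'$; it is an edge of the cocritical set $\co(G)$, not of $G$. The route you want is more elementary and avoids these pitfalls entirely: Lemma~\ref{l:bd} directly gives that the major $M$ of the critical Fatou gap is $\si_3$-periodic, so $\si_3(\bc)=\si_3(M)$ is periodic; and $\bc$ itself cannot be periodic, since otherwise $\bc$ and $M$ would both be elements of the cycle of $\si_3(M)$ mapping to $\si_3(M)$, contradicting that $\si_3$ restricted to a periodic orbit is a bijection. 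This gives preperiod exactly $1$ with no case analysis on whether $\si_3(G)$ is critical.

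Second, the converse is too loosely sketched to be checkable as written. The assertion that ``iterating $\Gamma$ from the comajor of a finite critical gap forces periodicity of $\bc$ itself'' is the crux, but it is stated without argument. The actual content needed is: if the critical set $G$ were a finite gap (or a critical leaf) and $\si_3(\bc)=\si_3(M)$ were periodic, then $\si_3(G)$ would be a finite periodic gap, and a counting-of-preimages argument forces $\si_3^{k}(G)=\co(G)$ for $k$ the period, placing $\co(G)$ (and hence its edge $\bc$) in the periodic cycle --- whence $\bc$ would be periodic, not of preperiod $1$. You gesture at this but never carry out the pullback bookkeeping that makes it work, and the $\Gamma$-dynamics alone (which only track lengths) are not sufficient to pin down which gap $\si_3^k(G)$ is. As it stands, the converse is a plan rather than a proof.
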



Suppose that $\lam$ is a symmetric q-lamination with
two finite critical gaps each of which is preperiodic of preperiod at least two. Then $\lam$ is
called a \emph{symmetric Misiurewicz lamination}.
%
A symmetric Mi\-siu\-re\-wicz lamination 
 has a well defined pair of comajors.
Suppose that the critical $\sim_\lam$-classes are gaps $G$ and $\tau(G)$ with at least $6$ vertices each.
Then there are two cocritical gaps $H\ne G$ and $\tau(H)\ne \tau(G)$ of $\lam$ such that $\si_3(H)=\si_3(G)$ and
$\si_3(\tau(H))=\si_3(\tau(G))$.
One edge of $H$ and one edge of $\tau(H)$ are the comajors of $\lam$.
Two majors of $\lam$ are edges of $G$ that are siblings of the comajor edge of $H$;
  two other majors of $\lam$ are edges of $\tau(G)$ that are siblings of the comajor edge of $\tau(H)$.
While other edges of $G$ and $\tau(G)$ are not siblings of 
 the comajors, they can be  majors 
of other laminations that are finite tunings of $\lam$.

Indeed, suppose that $\ell$ and $\ell'$ are two sibling edges of $G$ that are not majors.
The convex hull of $\ell\cup \ell'$ is a 4-gon $Q$ with two extra edges $\ol{y}$ and $\ol{q}$ not equal to $\ell$ or $\ell'$.
Construct a new lamination $\lam'$ by inserting $\ol{y}$ and $\ol{q}$ in $G$, pulling them back along the backward orbit of $G$
and then doing the same with $\tau(G)$ and its backward orbit. The majors of $\lam'$ are $\ol{y}$ and $\ol{q}$
and their $\tau$-images. If $\ell''$ is a leaf of $\lam$ which is not an edge of $G$ and is such that $\si_3(\ell'')=\si_3(\ell)$
then $\ell''$ and $\tau(\ell'')$ are the two comajors of $\lam'$.
Repeating this construction for all pairs of sibling edges of $G$ but the majors, we see that
every edge of the cocritical gap $H$ or $\tau(H)$ is a comajor of a certain symmetric lamination which is a
tuning of the original symmetric Misiurewicz lamination $\lam$. Call cocritical sets of Misiurewicz laminations \emph{Misiurewicz}
cocritical sets.
By \cite[Theorem 3.9]{paper1}, symmetric laminations have no wandering gaps.
Therefore, the above is a full description of finite gaps formed by comajors.
The cocritical gaps $H$ and $\tau(H)$
described above will be called \emph{Misiurewicz cocritical gaps}; similarly, if a symmetric Misiurewicz q-lamination
has critical 4-gons (not 6-gons or higher as was assumed above) we call its comajors \emph{Misiurewicz cocritical leaves}.

\begin{theorem}[\cite{paper1, paper2}]\label{t:ccl} 
The set of non-degenerate comajors of symmetric laminations is a q-lamination $\lam$ invariant under
$\tau$ that induces an equivalence relation $\sim_\lam$ on $\uc$.
For any non-degenerate comajor $\bc$
 (i.e., 
 a leaf of $\lam$) one of the following holds.

\begin{enumerate}

\item
It is a not eventually periodic 
two-sided limit leaf in $\lam$.

\item
It is 
an at least $2$-preperiodic 
two-sided limit leaf of $\lam$ (in which case $\bc$ is a Misiurewicz cocritical leaf)
or an edge of a finite gap $H$ of $\lam$ whose
edges are limits of leaves in $\lam$ disjoint from $H$ (in which case $H$ is a Misiurewicz cocritical gap).

\item
It is a $1$-preperiodic 
comajor of a Fatou lamination and
is disjoint from all other leaves of $\lam$; all such comajors $\bc$ are dense in $\lam$
and all 1-preperiodic angles are endpoint of such comajors.
\end{enumerate}
\end{theorem}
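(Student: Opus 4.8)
The plan is to build the q-lamination $\lam$ directly from the comajors produced in Lemma~\ref{pull-back-lam1}, and then to classify its leaves by the dynamical type of the comajor using Lemmas~\ref{l:sh}, \ref{l:pre1}, and the structure of Misiurewicz laminations described just before the theorem. First I would let $\Cc$ denote the set of all non-degenerate comajors of symmetric laminations together with all degenerate ones (points of $\uc$), and show that $\Cc$ is a lamination in the sense of Definition~\ref{lam}: conditions (L1) and (L3) are immediate (siblings of a comajor pair are disjoint, and distinct comajor pairs come from laminations whose short strips sets are nested, so by the last observation of Section~\ref{ss:symlam} their comajors are nested and hence unlinked), while (L2) — closedness of $\Cc^*$ — follows from the compactness of the space of $\sigma_3$-invariant laminations together with the continuity of the major/comajor construction in the Hausdorff metric. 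I would then verify that $\Cc$ induces a laminational equivalence $\sim_\Cc$ whose classes are exactly the finite gaps/leaves formed by comajors (the Misiurewicz $6$-gons and higher, the $4$-gons, and the stand-alone comajors): by the paragraph preceding the theorem together with the no-wandering-gaps result \cite[Theorem~3.9]{paper1}, every finite gap formed by comajors is a Misiurewicz cocritical gap, so (E1)--(E3) hold and $\lam=\Cc$ consists precisely of the edges of convex hulls of $\sim_\Cc$-classes; $\tau$-invariance is inherited because comajors come in symmetric pairs.

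Next comes the trichotomy. Fix a non-degenerate leaf $\bc$ of $\lam$, i.e.\ a comajor of some symmetric lamination. By the analysis of the dynamics of $\Gamma$ in Section~\ref{ss:symlam}, the orbit of the associated major $M_\bc$ is eventually captured in the ``between $\tfrac14$ and $\tfrac{5}{12}$'' regime or eventually lands on a leaf of length $0$ or $\tfrac12$; Lemma~\ref{l:sh} then forces that a forward image of $M_\bc$ can meet $\sh(M_\bc)$ only by coinciding with one of $M_\bc, M_\bc', \tau(M_\bc), \tau(M_\bc')$. This dichotomy — whether and when the orbit of $\bc$ returns — splits into: (i) the orbit never returns and $\bc$ is not (pre)periodic; (ii) the orbit returns with preperiod $\ge 2$, which by the Misiurewicz discussion means $\bc$ is a Misiurewicz cocritical leaf or an edge of a Misiurewicz cocritical gap $H$; (iii) the orbit returns with preperiod $1$, which by Lemma~\ref{l:pre1} means $\lam(\bc)$ is Fatou. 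In case (i) and case (ii) I would show $\bc$ is a two-sided limit of leaves of $\lam$ by exhibiting comajors of slightly perturbed legal pairs on both sides of $\bc$ (using Lemma~\ref{pull-back-lam1} to realize nearby legal pairs as comajor pairs); in case (ii), the Misiurewicz construction with the $4$-gons $Q$ and their extra edges $\ol y, \ol q$ explicitly produces leaves of $\lam$ accumulating on every edge of $H$ from outside $H$. In case (iii), I would show that a $1$-preperiodic comajor is isolated in $\lam$: any nearby legal pair would have to have the same combinatorics forced by the Fatou gap, so it cannot produce a distinct comajor; density of these comajors and the claim that every $1$-preperiodic angle is an endpoint of such a comajor follow from the fact that $1$-preperiodic critical leaves are dense among legal pairs and each such pair is legal by Lemma~\ref{pull-back-lam1}.

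The main obstacle, I expect, is the closedness clause (L2) together with the precise identification of the finite $\sim_\lam$-classes: one must rule out, in the Hausdorff limit of comajors, the appearance of a leaf that is \emph{not} itself realized as a comajor of any symmetric lamination, and one must ensure the limit does not produce an infinite gap that ought to be a $\sim_\lam$-class. This is where compactness of the lamination space and the no-wandering-gaps theorem do the real work, but matching the limiting legal pairs back to honest comajor pairs via Lemma~\ref{pull-back-lam1} — checking conditions (a) and (b) of Definition~\ref{d:legal} survive the limit — is the delicate point. Once that is in hand, the trichotomy is essentially a bookkeeping exercise driven by the dynamics of $\Gamma$ and Lemmas~\ref{l:sh} and \ref{l:pre1}, and the density and endpoint statements in case~(iii) reduce to the density of $1$-preperiodic points in $\uc$ and the continuity of the comajor construction.
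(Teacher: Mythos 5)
First, a point of reference: the paper you were given does not prove Theorem~\ref{t:ccl} at all --- it is imported wholesale from \cite{paper1, paper2}, so there is no in-paper argument to compare against. Your proposal is therefore judged on its own terms as an outline of how such a proof would go, and while the overall architecture (build the lamination from legal pairs via Lemma~\ref{pull-back-lam1}, then classify leaves by preperiod using Lemmas~\ref{l:sh} and~\ref{l:pre1} and the Misiurewicz discussion) is the right shape, there is one genuine gap in it.

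The gap is your treatment of condition (L1), which you call ``immediate.'' Your justification is that ``distinct comajor pairs come from laminations whose short strips sets are nested, so \dots their comajors are nested and hence unlinked.'' But the observation at the end of Section~\ref{ss:symlam} is conditional: \emph{if} $\sh(\lam_1)\subset\sh(\lam_2)$, then the comajors are nested. Two arbitrary symmetric laminations need not have comparable short strips sets --- for instance, two distinct Fatou laminations of the same period are incomparable, and the fact that their comajors are disjoint is exactly Corollary~\ref{c:major2}, cited from Lemma~6.1 of \cite{paper1} as a nontrivial result. The non-crossing of comajors coming from \emph{unrelated} laminations is the symmetric-cubic analogue of Thurston's theorem that minors of quadratic laminations do not cross; it is the central difficulty of the whole construction and requires a genuine dynamical argument (tracking forward images of the putatively linked majors into the short strips sets and deriving a contradiction via Lemma~\ref{l:sh}). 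By contrast, you flag (L2) and the identification of finite classes as ``where the real work is''; those are indeed delicate, but (L1) is at least as hard and your proposal currently assumes it. The rest of the trichotomy outline is plausible bookkeeping once (L1) and (L2) are secured, though you should also note that a non-degenerate comajor is never periodic (a periodic leaf of length $\le\frac16$ would have images re-entering $\sh$ in a way Lemma~\ref{l:sh} forbids), since otherwise your three cases do not exhaust the possibilities.
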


\begin{figure}

   \centering
        \includegraphics[height=6.2cm]{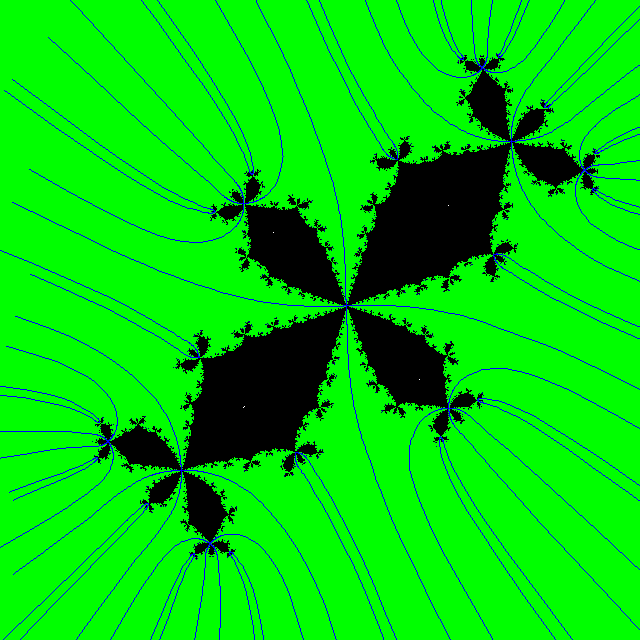}
        \includegraphics[height=6.2cm]{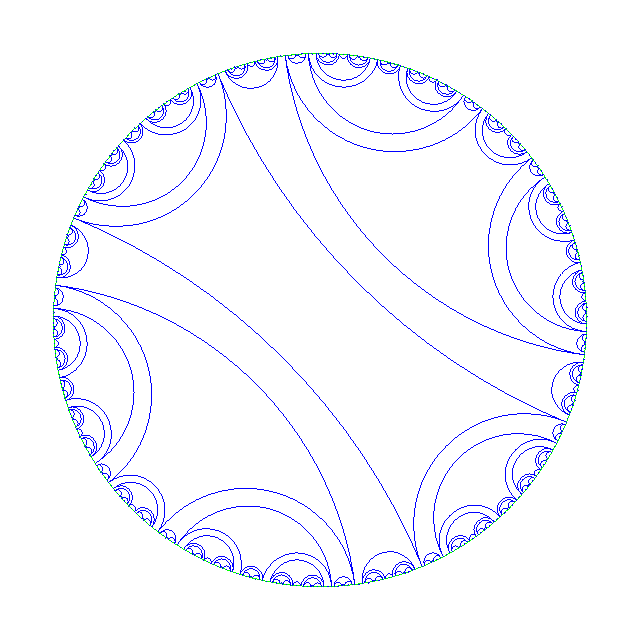}
     \caption{\label{f2} The symmetric cubic lamination with type B comajor $(\frac{5}{48},\frac{7}{48})$ on the right and the Julia set of a corresponding polynomial with external rays on the left.}
\end{figure}

Since comajors are leaves of laminations, their endpoints are either both not preperiodic,
or both preperiodic with the same preperiod and the same period, or both periodic with the same period.
All classes of $\sim_\lam$ from Theorem \ref{t:ccl} are finite. By Theorem \ref{t:ccl}, periodic points of $\uc$
are degenerate comajors.

\begin{definition}\label{d:ccl}
The q-lamination from Theorem \ref{t:ccl} is called the \emph{Cubic Symmetric Comajor Lamination} and is denoted
by $C_sCL$. It induces an equivalence relation denoted $\sim_{sy}$. The $\sim_{sy}$-classes corresponding to
symmetric Misiurewicz laminations are called \emph{Misiurewicz $\sim_{sy}$-classes}. Denote by $\M^{sy}_{3, comb}$
the factor space $\cdisk/\sim_{sy}$. Let $\pr_{comb}:\cdisk\to \M^{sy}_{3, comb}$ be the corresponding quotient map.
\end{definition}

Theorem \ref{t:ccl} verifies the \emph{density of hyperbolicity} conjecture for $C_sCL$.

\begin{lemma}[]\label{l:bd}
Let $\lam$ be a symmetric non-empty Fatou lamination.
Then one of the following holds.

\begin{enumerate}

\item[(B)] 
There is only one cycle $\Ac$ of Fatou gaps of $\lam$. 
It has even period $2m$, and is $\tau$-symmetric.
The periodic majors $M$ and $\si_3^m(M)=\tau(M)$ of $\lam$ 
are edges of critical gaps $U\in\Ac$ and $V=\si_3^m(U)=\tau(U)$. 
Non-periodic majors of $\lam$ 
are siblings of $M$ and $\tau(M)$ and edges of $U$ and $V$, respectively.
The remaining (i.e., not belonging to a major) $2m$-periodic vertices $x, y$ of $U$ are such that $\si_3^m(x)=\tau(y)$
while $\si_3^m(y)=\tau(x)$.

\item[(D)] 
There are exactly
two cycles of Fatou gaps of the same period, 
interchanged by $\tau$.
Critical gaps $U$, $V=\tau(U)$ 
 belong to different cycles.
The periodic majors $M$ and $\tau(M)$ of $\lam$ are edges of $U$ and $V$, respectively. 
Non-periodic majors are siblings of $M$ and $\tau(M)$ and edges of $U$ and $V$, respectively.
\end{enumerate}

In either case 
all edges of infinite gaps are eventually mapped to periodic majors.
The only periodic orbit of edges on the boundary of a Fatou gap of $\lam$ is the orbit of a major
of $\lam$ and it has the same period as the Fatou gap.
\end{lemma}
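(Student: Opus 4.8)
The plan is to extract everything from Lemma~\ref{l:pre1}, the dictionary relating comajors and majors recalled just before the lemma, and the fact that the central rotation $\tau$ is an automorphism of $\lam$ commuting with $\si_3$; the conceptual point is that for odd cubics a ``capture'' configuration cannot occur, so both critical gaps are periodic, after which (B) and (D) are simply the two ways $\tau$ can act on the resulting cycle(s) of critical gaps. First I set up notation. Since $\lam$ is a non-empty Fatou lamination, Lemma~\ref{l:pre1} gives that its comajor $\bc=\co_\lam$ is preperiodic of preperiod exactly $1$; in particular $\bc$ is non-degenerate, since degenerate comajors are periodic angles (by the remarks after Theorem~\ref{t:ccl}). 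Let $\mu=\si_3(\bc)$, a periodic leaf of $\lam$, and let $M_0,M_0'$ be the two siblings of $\mu$ other than $\bc$; together with $\tau(M_0),\tau(M_0')$ these are the majors of $\lam$. By the construction of majors from the comajor, and since $\lam$ is Fatou, $M_0,M_0'$ are edges of a critical gap $U$ while $\bc$ is an edge of the cocritical set $\co(U)$; non-degeneracy of $\bc$ forces $U$ to have degree $2$ (a degree-$3$ critical set has a degenerate cocritical set). Then $V:=\tau(U)$ is a second degree-$2$ critical gap, with edges $\tau(M_0),\tau(M_0')$, and a count of ramification shows that $U$ is the unique preimage gap of $\si_3(U)$ of degree $\ge 2$, that $\si_3(U)$ is not $\tau$-symmetric, and that $U\ne V$. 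So the critical sets of $\lam$ are exactly the degree-$2$ infinite gaps $U$ and $V=\tau(U)$.

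The crucial step is that $U$ (hence $V$) is periodic: $\mu$ is a periodic edge of $\si_3(U)$, so $\si_3^{n}$ with $n=\mathrm{per}(\mu)$ permutes the at most two gaps having $\mu$ as an edge; thus $\si_3(U)$ is periodic, and, being its unique preimage gap of degree $\ge2$, so is $U$, whence also $V=\tau(U)$, as $\tau$ carries periodic gaps to periodic gaps. This is the combinatorial reason ``capture'' is impossible for odd cubics. Now there is a genuine dichotomy. If $U$ and $V$ lie in two distinct $\si_3$-cycles of gaps, then $\tau$ carries the cycle of $U$ isomorphically onto that of $V$ while commuting with the shift $\si_3$, so the two cycles have equal period; $U$ and $V$ are the degree-$2$ gaps, one in each; letting $M$ denote the periodic one of the two major edges of $U$ — the other, $M'$, is non-periodic, since $\si_3$ is injective on a periodic edge orbit while $M_0,M_0'$ have the common image $\mu$ — the periodic majors are $M$ and $\tau(M)$ and the non-periodic ones $M',\tau(M')$. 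This is case~(D).

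Otherwise $U,V$ lie in one cycle $\Ac$ of period $P$. On the finite, cyclically ordered set $\Ac$ the map $\si_3$ acts as the shift and $\tau$ as an order-preserving involution commuting with it; the centralizer of a full cyclic shift consists of translations, so $\tau$ is translation by some $k$ with $2k\equiv 0\pmod P$, and $\tau(U)=V\ne U$ forces $k\not\equiv 0$, hence $P=2m$ even and $k=m$. Thus $\Ac=\tau(\Ac)$ and $V=\si_3^m(U)$, as in~(B). As before, let $M$ be the periodic major edge of $U$ (and $\tau(M)$ that of $V$), with $M',\tau(M')$ the non-periodic majors. One checks that $\mathrm{per}(M)=2m$: $\mathrm{per}(M)$ divides $2m$, and if it were a proper divisor then $\si_3^{\mathrm{per}(M)}(U)$ would be a gap $\ne U$ adjacent to $M$, which forces (since $U$ is periodic) $\mathrm{per}(M)=m$ and $\si_3^m(U)=V$, i.e.\ $M$ an edge of $V$; but $M$ is not an edge of $V$, for that would make $\{M,M'\}$ and $\{\tau(M),\tau(M')\}$ coincide, impossible for the distinct gaps $U,V$. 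Since $\si_3^m(M)$ is then a periodic major edge of $V$ different from $M$, it equals $\tau(M)$; and the two $2m$-periodic vertices $x,y$ of $U$ not lying on a major satisfy $\si_3^m(x)=\tau(y)$, $\si_3^m(y)=\tau(x)$ by the same identification $\tau=\si_3^m$ on $\Ac$ together with $\si_3$-equivariance applied to $\V(U)$.

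Finally, the closing assertions: by \cite[Theorem~3.9]{paper1} symmetric laminations have no wandering gaps, so every infinite gap of $\lam$ is (pre)periodic, and its forward orbit lands in $\Ac$ in case~(B) or in one of the two cycles in case~(D), on each of whose periodic Fatou gaps the only periodic orbit of edges is the major orbit, of the period of that gap; hence every edge of every infinite gap is eventually mapped onto a periodic major. The main obstacle is exactly this last structural input — that a periodic degree-$2$ Fatou gap of a symmetric cubic lamination carries a unique periodic orbit of edges whose period equals that of the gap, a fact which also underlies the choice of ``$M$'' above — and I would prove it by analysing the return map $\si_3^{\mathrm{per}(U)}|_{\partial U}$ together with the permutation it induces on the edges of $U$ (the complementary arcs of the Cantor set $\V(U)$), using the commutation $\si_3\circ\tau=\tau\circ\si_3$; granted this, the rest is a routine assembly of Lemma~\ref{l:pre1}, the comajor/major dictionary, and elementary facts about commuting actions on finite cyclic sets.
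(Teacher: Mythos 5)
Your reconstruction of the case analysis (B)/(D) is essentially the material the paper treats as ``immediate'' (it is a summary of Lemma~3.8 of \cite{paper1}), and that part of your argument is fine in outline. But the one claim the paper actually proves in its displayed proof --- that the \emph{only} periodic orbit of edges on the boundary of a Fatou gap is the orbit of a major, with the same period as the gap --- is exactly the claim you defer, calling it ``the main obstacle'' and offering only the strategy of ``analysing the return map $\si_3^{\mathrm{per}(U)}|_{\partial U}$ together with the permutation it induces on the edges.'' That strategy, as stated, does not close the gap. The semiconjugacy $\phi$ of the (half-)return map with $\si_2$ collapses \emph{all} edges of $U$ to points, so a hypothetical second periodic edge $N\ne M$ simply projects to some $\si_2$-periodic point; since $\si_2$ has periodic points of every period, nothing is contradicted. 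Worse, the statement ``the $\phi$-images of the edges of $U$ are the $\si_2$-preimages of the fixed point'' is equivalent to the assertion being proved, so any argument routed through that picture risks circularity.

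The missing ingredient is Lemma~\ref{l:sh}. The paper's argument is: every edge of $U$ eventually maps to a periodic or critical edge (standard), $U$ has no critical edges, so it suffices to exclude a periodic edge $N\ne M$. No image of $N$ is degenerate or a diameter (else $N=M$), so among the images of $N$ one can pick the closest approach $N'$ in length to $\tfrac13$; Lemma~\ref{l:sh} then forces any eventual image of $N'$ that is an edge of $U$ to coincide with a major of $\lam$, i.e.\ with $M$ --- a contradiction. Note also that the paper derives the period statements for the majors from the same Lemma~\ref{l:sh}, whereas you derive them by a direct combinatorial argument about the cyclic action of $\si_3$ and $\tau$ on the cycle of gaps; your route there is workable (two distinct gaps cannot share two edges, which rescues your step ruling out $\si_3^m(U)$ being adjacent to $M$), but without the Lemma~\ref{l:sh} argument for the final claim the proof is incomplete at its only non-routine point.
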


\begin{figure}

   \centering
        \includegraphics[height=6.2cm]{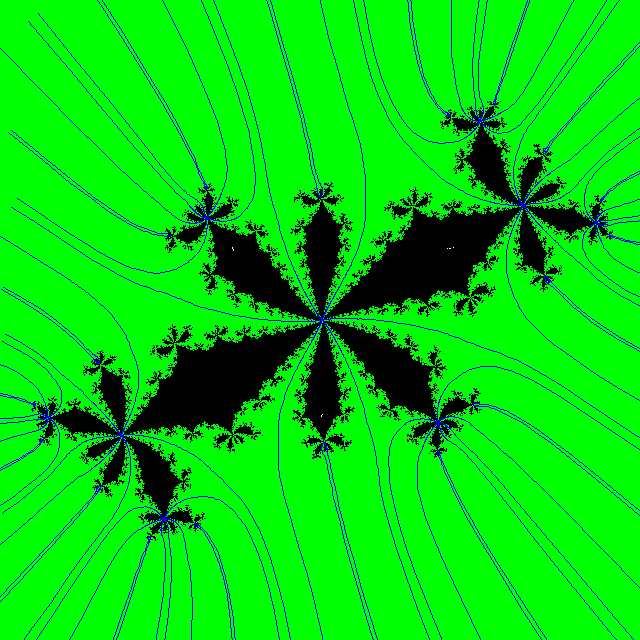}
        \includegraphics[height=6.2cm]{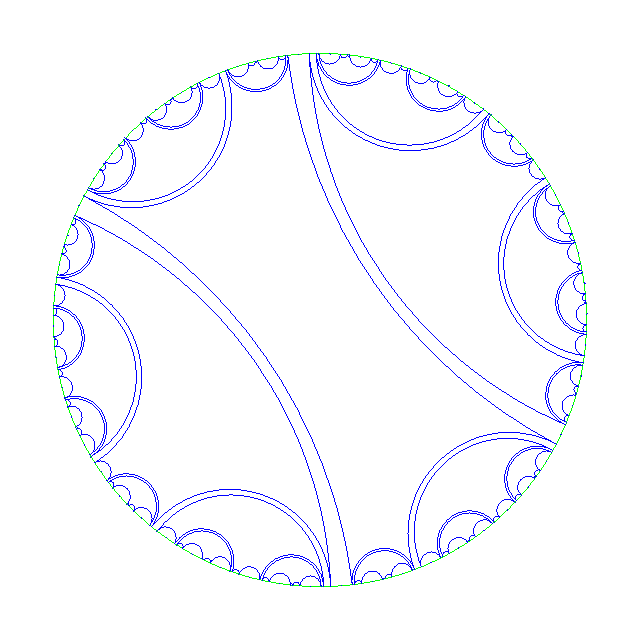}
     \caption{\label{f3} The symmetric cubic lamination with type D comajor $(\frac{7}{78},\frac{4}{39})$ on the right and the Julia set of a corresponding polynomial with external rays on the left.}
\end{figure}

Lemma \ref{l:bd} summarizes the results of \cite{paper1} (compare \cite{paper1}, Lemma 3.8) dealing with symmetric non-empty
(i.e., having some non-degenerate leaves) Fatou laminations. Figures \ref{f2} and \ref{f3} show examples of polynomials of type B and D and the corresponding laminations.

\begin{proof}
All claims of the lemma except for the last one are immediate; observe that the claims concerning the period of the majors
follow from Lemma \ref{l:sh}. To prove the last claim consider an edge $\ell$ of a critical gap $U$ from a cycle $\Tc$ of Fatou gaps.
It is well-known that any edge of $U$ eventually maps to a periodic or a critical edge. 
Since, evidently, $U$ has no critical edges, it suffices to prove that
 the only periodic edge of $U$ is $M$.
Indeed, let $N\ne M$ be a periodic edge of $U$. Then no image of $N$ can be a point (since $N$ is periodic)
or a diameter of $\cdisk$ (since otherwise 
$N$ itself is a diameter invariant under $\si_3$, 
hence $N=M$). Take the closest approach $N'$ in length to $\frac13$ among the images of $N$. By
Lemma \ref{l:sh} an eventual image of $N'$ that is an edge of $U$ must coincide with $M$, a contradiction.
\end{proof}


The terminology below is adopted from \cite{mil93, mil09}, see also \cite{bopt16}.

\begin{definition}\label{d:bd} Symmetric Fatou laminations with properties from Lemma \ref{l:bd}(B)
(respectively, Lemma \ref{l:bd}(D)) are said to be \emph{of type B} (respectively, \emph{of type D}).
\end{definition}

We will also need an immediate corollary of Lemma 6.1 of \cite{paper1}.

\begin{corollary}[\cite{paper1}, Lemma 6.1]\label{c:major2}
Distinct symmetric Fatou laminations have disjoint comajors.
\end{corollary}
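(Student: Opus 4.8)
The plan is to reduce the statement to the claim that distinct symmetric Fatou laminations have distinct comajor pairs, after which the disjointness follows at once from the known structure of $C_sCL$.

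First I would fix notation: let $\lam_1$ and $\lam_2$ be symmetric Fatou laminations with comajor pairs $\{\bc_1,\tau(\bc_1)\}$ and $\{\bc_2,\tau(\bc_2)\}$. By Lemma \ref{l:pre1} each $\bc_i$ is $1$-preperiodic, so Theorem \ref{t:ccl}(3) applies: $\bc_i$ is a leaf of $C_sCL$ that is disjoint from every other leaf of $C_sCL$. Since $C_sCL$ is $\tau$-invariant, $\tau(\bc_i)$ is also such a leaf. Because $\tau$ is a bijection, the pairs $\{\bc_1,\tau(\bc_1)\}$ and $\{\bc_2,\tau(\bc_2)\}$ are distinct precisely when $\bc_1$ (and hence $\tau(\bc_1)$) lies outside $\{\bc_2,\tau(\bc_2)\}$; in that case $\bc_1$ and $\tau(\bc_1)$ are each disjoint from both $\bc_2$ and $\tau(\bc_2)$, which is exactly the conclusion of the corollary. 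So the whole argument comes down to proving: \emph{if $\lam_1$ and $\lam_2$ share a comajor pair, then $\lam_1=\lam_2$.}

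To prove this implication I would argue that the comajor pair determines the Fatou lamination. From $\bc$ one recovers the two majors $M_{\bc},M'_{\bc}$ — the two non-short siblings of $\bc$, i.e.\ the two leaves of $\si_3^{-1}(\si_3(\bc))$ other than $\bc$ itself, as remarked after the definition of comajors — together with $\tau(M_{\bc}),\tau(M'_{\bc})$; thus two Fatou laminations with a common comajor pair have the same four majors. By Lemma \ref{l:bd} these majors are edges of the critical Fatou gaps $U$ and $V=\tau(U)$, with one major in each of $U,V$ periodic of a definite period; given that period, $U$ is the only critical Fatou gap having that edge, so $U,V$ and their finite orbits coincide for $\lam_1$ and $\lam_2$. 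Finally, by the last clause of Lemma \ref{l:bd} every edge of an infinite gap of a symmetric Fatou lamination is eventually mapped to a periodic major, and by \cite[Theorem 3.9]{paper1} there are no wandering gaps, so the remainder of each lamination is forced by pulling $U,V$ and their cycle(s) back along the full backward orbit. This pull-back is canonical — it is precisely the construction of $\lam(\bc)$ in Lemma \ref{pull-back-lam1} — whence $\lam_1=\lam(\bc)=\lam_2$, the desired contradiction.

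The hard part is the last uniqueness step: verifying that the canonical pull-back recovers the \emph{entire} Fatou lamination, leaving no room to adjoin extra leaves or further infinite gaps. This is exactly where the absence of wandering gaps and the full strength of Lemma \ref{l:bd} (exactly one periodic edge-orbit per Fatou gap, all other edges of infinite gaps preperiodic to a major) are needed; it is also the content of Lemma 6.1 of \cite{paper1}, which may simply be quoted. Everything else — the sibling description of majors, the $\tau$-symmetry, and the disjointness of $1$-preperiodic comajors from Theorem \ref{t:ccl}(3) — is routine bookkeeping.
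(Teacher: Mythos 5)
The paper offers no argument for this corollary at all --- it is imported verbatim from Lemma~6.1 of \cite{paper1} --- so there is nothing internal to compare your proof against; you are reconstructing a proof from the quoted results. Your first reduction is sound and is the easy half: by Lemma~\ref{l:pre1} (together with the fact that $\tau$ commutes with $\si_3$) both comajors of a symmetric Fatou lamination are $1$-preperiodic, hence fall under case~(3) of Theorem~\ref{t:ccl} and are disjoint from every other leaf of $C_sCL$. Thus two comajor pairs that are distinct \emph{as sets of leaves} are disjoint, which is the form in which the paper actually uses the statement (e.g.\ ``distinct $1$-preperiodic comajors are disjoint'' in the proof of Theorem~\ref{t:paraland}). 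The only caveat is that Theorem~\ref{t:ccl} is itself quoted from \cite{paper1,paper2} and rests on the machinery of the very lemma being cited, so your derivation is non-circular only within the logic of the present paper.

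The genuine gap is in the injectivity step (same comajor pair $\Rightarrow$ same lamination), which the literal wording of the corollary does require. Two assertions there are unsupported. First, ``given that period, $U$ is the only critical Fatou gap having that edge'' is exactly the kind of uniqueness that needs proof: nothing quoted in this paper excludes two symmetric Fatou laminations whose critical gaps $U_1\supsetneq U_2$ share the periodic major $M$ as an edge, since the extra leaves of the finer lamination inside $U_1$ could all be far from length $\frac13$ and so would not displace $M$ as the major. Second, even granting identical critical gaps and grand orbits, a $\si_3$-invariant lamination need not coincide with the closure of the grand orbit of its critical sets, so ``the remainder of each lamination is forced by pulling back'' is not automatic. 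You flag this as the hard part and propose to quote Lemma~6.1 of \cite{paper1} for it; but that collapses your proof into precisely the citation the paper already makes, so the substantive half of the statement remains unproved. A self-contained argument would need something in the spirit of the minimality argument in the proof of Theorem~\ref{t:maingap} (delete the grand orbits of the majors and show that any residual leaves would force a comajor configuration incompatible with $\bc$), and that adaptation is not routine.
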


Next we consider infinite gaps of $C_sCL$.
One of them 
 plays a special role.
Recall that $\Oc$ is the center of $\D$.
Each comajor is of length at most $\frac16$.
Hence $\Oc$ does not belong to any comajor; it must then lie inside a gap.
The \emph{main gap} $G_{main}$ is by definition the gap of $C_sCL$ that contains $\Oc$ in its interior.

\begin{theorem}\label{t:maingap}
The gap $G_{main}$ is infinite, and $\tau(G_{main})=G_{main}$.
Each edge $\ell$ of $G_{main}$ is a comajor with the same image as the longest edge of a $\si_3$-invariant symmetric
finite rotational gap $H$ and is associated with the symmetric Fatou lamination $\lam_\H$ formed by $H$, Fatou gaps of degree greater than $1$
attached to $H$ and ``rotating'' around $H$, and their pullbacks.
\end{theorem}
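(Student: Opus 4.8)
The plan is to verify the three assertions in turn — $\tau(G_{main})=G_{main}$, that $G_{main}$ is infinite, and the description of its edges — the last of which carries essentially all of the work.

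\emph{Invariance and infiniteness.} By Theorem~\ref{t:ccl} the lamination $C_sCL$ is $\tau$-invariant, so $\tau$ permutes its gaps; since $\tau(\Oc)=\Oc$, the gap $\tau(G_{main})$ again contains $\Oc$ in its interior, and by uniqueness of such a gap $\tau(G_{main})=G_{main}$. For infiniteness, suppose $G_{main}$ were finite. Then, by Theorem~\ref{t:ccl} and the absence of wandering gaps quoted from \cite{paper1}, $G_{main}$ is a Misiurewicz cocritical gap, say $G_{main}=\co(G)$ with $G$ the critical gap of a symmetric Misiurewicz q-lamination $\lam$; the critical $\sim_\lam$-classes $G$ and $\tau(G)$ are then distinct gaps, each of $\si_3$-degree $2$. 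As $\si_3$ commutes with $\tau$, we have $\tau(\co(G))=\co(\tau(G))$, and since $\tau$ fixes $\Oc\in\co(G)$ it follows that $\Oc$ lies in the interior of $\co(\tau(G))$ as well; distinct gaps having disjoint interiors, $\co(G)=\co(\tau(G))$, i.e.\ $\si_3(G)=\si_3(\tau(G))$. But then the two disjoint degree-$2$ gaps $G$, $\tau(G)$ lie over a common image, forcing $\si_3$ to have degree at least $4$ — impossible. Hence $G_{main}$ is infinite.

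\emph{Edges.} Let $\ell$ be an edge of $G_{main}$. Since $G_{main}$ lies on one side of $\ell$ and no leaf of $C_sCL$ meets its interior, leaves approach $\ell$ at most from the other side, so $\ell$ is not a two-sided limit leaf; this rules out cases~(1) and~(2a) of Theorem~\ref{t:ccl}. Likewise, by Theorem~\ref{t:ccl}(2) an edge of a finite gap $H'$ of $C_sCL$ is a one-sided limit of comajors from the side opposite $H'$, which cannot be $G_{main}$ (its interior contains no comajor); and as $G_{main}\neq H'$, the leaf $\ell$ would then fail to bound $G_{main}$, a contradiction. Hence $\ell$ is of type~(3): the $1$-preperiodic comajor of a symmetric Fatou lamination $\lam_\ell$, which by Lemma~\ref{l:bd} is of type~B or type~D. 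I would then show that $\lam_\ell$ is a \emph{primitive} type~B lamination, i.e.\ one whose single $\tau$-symmetric cycle $\Ac$ of Fatou gaps rotates around a central finite gap, by an ``outermost'' argument: if $\lam_\ell$ is of type~D, or a type~B lamination that is a proper tuning of a smaller one, then there is a symmetric lamination $\lam'$ with $\sh(\lam_\ell)\subsetneq\sh(\lam')$, so by the monotonicity remark preceding Lemma~\ref{l:sh} the comajor $\ell=\co_{\lam_\ell}$ lies strictly under $\co_{\lam'}$; then $\co_{\lam'}$ separates $\ell$ from $\Oc$, so $\ell$ cannot bound $G_{main}$. Therefore $\lam_\ell$ is primitive type~B, with cycle $\Ac$ of period $2m$, critical gaps $U$ and $V=\si_3^m(U)=\tau(U)$, and periodic major $M\subset\partial U$. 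Consecutive elements of the orbit $M,\si_3(M),\dots,\si_3^{2m-1}(M)$ share an endpoint, so this orbit bounds a finite polygon $H$; it is $\si_3$-invariant (its boundary is an $\si_3$-orbit), $\tau$-symmetric (as $\tau(M)=\si_3^m(M)$ is in the orbit), and rotational. Since $M$ is a major of $\lam_\ell$, it is a longest edge of $H$; $\si_3(\ell)=\si_3(M)$ is the image of that longest edge; and $\lam_\ell=\lam_\H$ is exactly the Fatou lamination formed by $H$, the degree-$2$ Fatou gaps attached along $M$, $\tau(M)$ and rotating around $H$, and their pullbacks — which is the assertion of the theorem. (There is such an $H$ for each of the infinitely many admissible rotation numbers, and distinct $H$ yield disjoint comajors by Corollary~\ref{c:major2}, so this also re-proves infiniteness.)

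\emph{The main obstacle.} Invariance and infiniteness are short given Theorem~\ref{t:ccl}. The substance is in the ``Edges'' step, and within it the delicate point is the ``outermost'' dichotomy — that $\co_{\lam_\ell}$ bounds $G_{main}$ exactly when $\lam_\ell$ is primitive type~B with a central rotational core $H$, and never for type~D or non-primitive type~B. This requires controlling how the short-strip sets $\sh(\cdot)$ nest (Lemma~\ref{l:sh} and the monotonicity remark) and singling out, for each admissible period, the ``largest'' such set — the one coming from the rotational core. The geometric reason the conclusion is natural is that a core $H$, being $\tau$-symmetric, has barycenter $\Oc$ and hence surrounds the center, which is precisely what attaches these comajors to $G_{main}$; making this precise, and checking that the polygon $H$ extracted from a primitive type~B lamination is genuinely a $\si_3$-invariant $\tau$-symmetric finite rotational gap with $M$ as a longest edge and $\lam_\ell=\lam_\H$, is the remaining technical work.
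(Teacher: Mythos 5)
Your treatment of the $\tau$-invariance, the infiniteness of $G_{main}$ (via the two cocritical gaps both containing $\Oc$ and the resulting degree count), and the reduction of an edge $\ell$ to case~(3) of Theorem~\ref{t:ccl} all match the paper's argument in substance. The genuine error is in the ``Edges'' step: you claim that $\lam_\ell$ must be a \emph{primitive type~B} lamination and that any type~D lamination is a proper tuning of a smaller one, hence has its comajor strictly under another comajor. This is false. The paper's own proof explicitly keeps both options open (``By Lemma~\ref{l:bd}, the lamination $\lam$ can be of type B or D''), and there is a concrete counterexample: the lamination of $z^3+z$ (parameters $c=\pm i/\sqrt{3}$ on $\partial\H_{main}$), which consists of the invariant diameter $\ol{0\,\tfrac12}$ together with the two invariant degree-$2$ Fatou gaps above and below it and their pullbacks. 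Its two invariant Fatou gaps form two period-one cycles interchanged by $\tau$, so it is of type~D; it is minimal (not a tuning of any smaller nonempty symmetric lamination, its major being the diameter itself); and by Theorem~\ref{t:mainpoly} its comajors are edges of $G_{main}$. Consequently your ``outermost'' dichotomy fails as stated, and the construction of $H$ as the polygon bounded by the single orbit $M,\si_3(M),\dots,\si_3^{2m-1}(M)$ breaks down when the edges of the rotational core form two $\si_3$-orbits rather than one.

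A secondary gap: even in the type~B case, the assertion ``not primitive $\Rightarrow$ proper tuning of a smaller lamination with strictly larger short-strip set'' is exactly the hard point, and you leave it as something you ``would then show.'' The paper's mechanism is different and worth noting: it first proves the periodic major $M$ is \emph{isolated} in $\lam$ (if $M$ were a two-sided limit, then $\ell$ would be a limit of comajors under which it lies, by Lemma~6.6 of \cite{paper1}, contradicting $\Oc\in G_{main}$), then removes the grand orbits of $M$ and $\tau(M)$ and shows the remaining symmetric lamination must be empty, since otherwise its comajor would lie above $\ell$. That argument makes no case distinction between types B and D and delivers directly that $\lam$ is the grand orbit of its majors, hence the rotational structure around a finite invariant gap (or leaf) $H$. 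You should replace the primitivity dichotomy with this isolation-plus-removal argument, or at minimum restate your conclusion to allow the type~D rotational cores.
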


\begin{proof}
If $G_{main}$ is finite, then,
 by Theorem \ref{t:ccl}, 
 it is a Misiurewicz cocritical gap of preperiod at least 2 of a symmetric lamination $\lam$. 
This is a contradiction, since 
 then the other cocritical set of $\lam$ will have to contain $\Oc$ and intersect the interior of $G_{main}$. 
Thus, $G_{main}$ is infinite.

Let $\ell$ be an edge of $G_{main}$ and the marked comajor of a symmetric lamination $\lam$.
Since $\Oc\in G_{main}$, then $\ell$ cannot be located under another comajor.
By Theorem  \ref{t:ccl}, the leaf $\ell$ can only be a 1-preperiodic comajor of a Fatou lamination $\lam$.
Let $U$ be the marked critical Fatou gap of $\lam$
with periodic major $M$. If $M$ is the limit of leaves of $\lam$ (necessarily from outside of $U$), then $\ell$ is the limit
 of leaves $\ell_i$ so that $\ell$ is located 
 under $\ell_i$ for any $i$.
 By Lemma 6.6 of \cite{paper1}, this implies that $\ell$ is
the limit of the comajors under which $\ell$ is located, a contradiction.
Hence $M$ is an edge of a periodic gap 
 and is isolated in $\lam$.
Clearly, so is $\tau(M)$.
Let us remove the grand orbits of $M$ and $\tau(M)$ from $\lam$ and consider the resulting family of leaves $\lam'$.
It easily follows (essentially, by definition) that $\lam'$ is again a symmetric lamination. If $\lam'\subsetneqq \lam$ is non-empty, then,
evidently, it has a comajor $\ell'$ such that $\ell$ is under $\ell'$, a contradiction. Thus, $\lam'$ is the empty lamination and, so,
the grand orbits of $M$ and $\tau(M)$ form the entire $\lam$.

Since $\lam$ must have a finite invariant gap $H$, it follows that $\lam$ consists of $H$,
 Fatou gaps attached to $H$ and ``rotating'' around $H$, and their iterated pullbacks.
Observe that $H$ itself must be symmetric under $\tau$.
By Lemma \ref{l:bd}, the lamination $\lam$ can be of type B or D.
By definition, the two
periodic majors of $\lam$ are the closest to criticality edges of $H$ (in this case it is equivalent to being the longest). There are two
comajors of $H$; just like in the case of symmetric polynomials, either of them can be marked, and so in $C_sCL$ the lamination $\lam$ is
reflected twice.
\end{proof}

The following notion will allow us to deal with type B and D laminations in a unified fashion.

\begin{definition}[First (half-)return]
\label{d:eta} Let $\lam$ be a symmetric Fatou lamination and $U$ be a critical gap of $\lam$.
If $\lam$ is of type D and a critical gap $U$ is of period $n$ then set $\eta=\si_3^n|_U$.
If $\lam$ is of type B and a critical gap $U$ is of period $2m$ set $\eta=\tau\circ \si_3^m|_U$.
Thus, $\eta$ is a self-map of $U\cap\uc$; it can also be extended linearly
over the edges of $U$ and, using a barycentric construction, inside $U$.
The map $\eta$ is called the \emph{first (half-)return map} of $U$.
\end{definition}

Strictly speaking, $\eta$ depends on the choice of a symmetric lamination and its gap,
 however, we will not reflect it in writing 
 to lighten the notation.
Lemma \ref{l:eta} is left to the reader. 

\begin{lemma}\label{l:eta}
Let $U$ be a critical gap of a symmetric Fatou lamination $\lam$. Then $\eta$ maps $\partial U$
 onto $\partial U$
in a 2-to-1 fashion and is semiconjugate
to $\si_2$ by the monotone map $\phi$ collapsing edges of $U$ to points.
The fixed point set of $\eta|_{\partial U}$ is the periodic major of $\lam$.
If $\ell\subset \cdisk$ is a chord
 whose endpoints are never mapped 
 to the $\si_2$-fixed point,
 then the $\phi$-preimage of $\ell\cap\uc$ spans a chord in $U$ that has a unique sibling $\ell'\subset \co(U)$.
\end{lemma}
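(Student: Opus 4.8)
The plan is to pin down the action of $\eta$ on $\partial U$ first and then read off the lifting statement for chords. To begin, one checks $\eta(U)=U$: if $\lam$ is of type D then $U$ is $\si_3^n$-periodic by Lemma \ref{l:bd}, so $\eta(U)=\si_3^n(U)=U$; if $\lam$ is of type B then $\si_3^m(U)=\tau(U)$, so $\eta(U)=\tau(\si_3^m(U))=\tau^2(U)=U$. Because $\lam$ is a symmetric cubic lamination with $U\ne\tau(U)$, its only critical sets are $U$ and $\tau(U)$, each of degree exactly $2$ (the total criticality is $2$, split evenly between the $\tau$-paired critical gaps); hence the intermediate gaps $\si_3^k(U)$, $0<k<n$ (type D) or $0<k<m$ (type B), are non-critical, and as $\tau$ is a homeomorphism the composition $\eta$ is $2$-to-$1$ on $\partial U$. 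Since $\eta$ carries edges of $U$ to edges of $U$ by gap invariance and preserves circular orientation, it descends through the monotone edge-collapse $\phi$ to an orientation-preserving degree-$2$ self-covering of a circle; by the structure theory of periodic Fatou gaps of invariant laminations (cf.\ \cite{Th,bmov13} and \cite{paper1}) this descended map is conjugate to $\si_2$, and after identifying the target circle of $\phi$ with $\uc$ through that conjugacy we obtain $\si_2\circ\phi=\phi\circ\eta$ on $\partial U$.

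For the fixed set, recall that by Lemma \ref{l:bd} the periodic major $M$ is an edge of $U$, and $\eta(M)=M$ (for type D, $\si_3^n(M)=M$; for type B, $\si_3^m(M)=\tau(M)$, so $\eta(M)=\tau^2(M)=M$); hence $\phi(M)$ is the $\si_2$-fixed point $0$, and $\phi^{-1}(0)=M$, this fiber being a single collapsed edge. If $\eta(x)=x$ then $\si_2(\phi(x))=\phi(x)$, so $\phi(x)=0$ and $x\in M$. Conversely, $\eta$ is injective on the non-degenerate edge $M$, so $M$ lies in a fundamental domain of the covering $\eta|_{\partial U}$, on which $\eta$ is an orientation-preserving homeomorphism onto $\partial U$; thus $\eta|_M$ is an orientation-preserving self-homeomorphism of the arc $M$, fixes both endpoints of $M$, and (being linear on edges) fixes $M$ pointwise. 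Therefore $\mathrm{Fix}(\eta|_{\partial U})=M$.

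For the last assertion, let $\ell$ be a chord with endpoints $s,t$ on the target circle $\uc$ of $\phi$ whose forward $\si_2$-orbits avoid $0$. If $\phi^{-1}(s)$ were an edge $e$ of $U$, then by Lemma \ref{l:bd} some power $\eta^k(e)=M$, so $\si_2^k(s)=\phi(\eta^k(e))=\phi(M)=0$, a contradiction; hence $\phi^{-1}(s)=\{p\}$ is a single point, necessarily not an endpoint of any edge, and likewise $\phi^{-1}(t)=\{q\}$. No edge $e$ of $U$ separates $p$ from $q$, since $e$ is one of the two arcs of the Jordan curve $\partial U$ cut off by the endpoints of $e$ and the non-edge points $p,q$ both lie on the other arc; so $\gamma:=\ol{pq}$ crosses no edge of $U$ and is not itself an edge, i.e.\ it is a chord spanning $U$, and $\si_3(\gamma)\subseteq\si_3(U)$. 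Finally, since $U$ has degree $2$ and $\deg\si_3=3$, the cocritical set $\co(U)$ — an infinite gap, homeomorphic to $\si_3(U)$ — is mapped homeomorphically onto $\si_3(U)$ by $\si_3$; pulling $\si_3(\gamma)$ back through this homeomorphism gives a chord $\ell'\subseteq\co(U)$ with $\si_3(\ell')=\si_3(\gamma)$ and $\ell'\ne\gamma$ (as $U\cap\co(U)=\0$). Any sibling of $\gamma$ contained in $\co(U)$ has the same $\si_3$-image as $\gamma$, hence equals $\ell'$ by injectivity of $\si_3|_{\co(U)}$; so $\ell'$ is the unique such sibling.

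The step I expect to be the main obstacle is identifying the descended map with $\si_2$ rather than with some other orientation-preserving degree-$2$ circle covering: this is the known but nontrivial fact that the ideal-boundary dynamics of a degree-$d$ periodic Fatou gap is $\si_d$, which I would quote from \cite{Th,bmov13} and the companion papers rather than reprove here. A secondary point requiring care is the uniform treatment of types B and D — in particular, verifying that ``every edge of $U$ is eventually mapped to $M$'' passes from $\si_3$ to the first (half-)return $\eta$, which uses only that $\eta$ maps $\partial U$ onto itself.
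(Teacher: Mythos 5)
Your proof is correct. Note that the paper gives no argument here at all --- it states ``Lemma \ref{l:eta} is left to the reader'' --- so there is nothing to compare against; your write-up supplies exactly the expected standard argument: the degree count forcing $U$ and $\tau(U)$ to have degree $2$ each, the citation of the known fact that the edge-collapse of a degree-$2$ (half-)return on a periodic Fatou gap is modeled by $\si_2$, the orientation-preservation argument showing $\eta$ fixes $M$ pointwise rather than flipping it, and the use of Lemma \ref{l:bd} (all edges of $U$ eventually map to the periodic major) to see that fibers of $\phi$ over points with $\si_2$-orbits avoiding $0$ are singletons, after which uniqueness of $\ell'$ follows from the degree-$1$ restriction $\si_3|_{\co(U)}$. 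The two points you flag as delicate are indeed the right ones, and your treatment of both (in particular the transfer from $\si_3$-iterates to $\eta$-iterates via $\tau^j\circ\si_3^{jm}$ in type B) is sound.
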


The leaf or point $\ell'$ described in the last claim of Lemma \ref{l:eta} is said to be \emph{induced} by $\ell$.

A parabolic quadratic polynomial from the Main Cardioid has a
$\si_2$-invariant lamination $\lam_2$ called \emph{central}; the major of $\lam_2\ne \0$ is
an edge shared by a finite invariant gap and a critical periodic Fatou gap.

\begin{theorem}\label{t:cardio}
Let $G$ be an infinite gap of $C_sCL$ not containing $\Oc$. Then, for some Fatou
lamination $\lam$, 
 a cocritical Fatou gap $V$ of $\lam$ contains $G$,
and $\partial G$ consists of single points and chords in $V$ corresponding to majors of $\si_2$-invariant central laminations.
In particular, edges of $G$ are $1$-preperiodic while
other vertices of $G$ have infinite orbits.
\end{theorem}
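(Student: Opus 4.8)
The plan is to read off the type of the edges of $G$ from Theorem~\ref{t:ccl}, then to realize $G$ inside a cocritical Fatou gap of a suitable Fatou lamination, and finally to transport the boundary of $G$ to the quadratic circle via Lemma~\ref{l:eta}. First I would analyze the edges of $G$. Each edge of $G$ is a non-degenerate comajor, hence a leaf of $C_sCL$, so by Theorem~\ref{t:ccl} it is of one of the three listed kinds. A two-sided limit leaf of a lamination cannot be an edge of a gap: leaves accumulate on it from both sides, so no complementary component of $(C_sCL)^{*}$ is adjacent to the whole leaf. This excludes type~(1) and the limit-leaf alternative of type~(2). In the remaining alternative of type~(2) the comajor is an edge of a \emph{finite} gap $H$ all of whose edges are limits of leaves disjoint from $H$; such an edge again has leaves of $C_sCL$ accumulating on its non-$H$ side and so cannot bound the infinite gap $G$. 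Hence every edge of $G$ is of type~(3): a $1$-preperiodic comajor of a symmetric Fatou lamination, isolated in $C_sCL$. This already gives the claim that the edges of $G$ are $1$-preperiodic.

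Next I would produce $\lam$ and $V$. Since $\Oc$ lies in the interior of $G_{main}$ (Theorem~\ref{t:maingap}) and $G\neq G_{main}$, the center $\Oc$ is not in $\overline{G}$; as a gap is convex, some edge $\ell_0$ of $G$ separates $\Oc$ from the interior of $G$. Because $\ell_0$ is a comajor, $|\ell_0|\le\frac16$, so $G$ lies in the convex hull of $\ell_0\cup\overline{h(\ell_0)}$, all vertices of $G$ lie on $\overline{h(\ell_0)}$, and every edge of $G$ is located under $\ell_0$. Write $\ell_0=\co_\lam$ for a symmetric Fatou lamination $\lam$ with marked critical gap $U$ (infinite, since $\ell_0$ is $1$-preperiodic), and set $V:=\co(U)$; then $V$ is an infinite gap of $\lam$ and $\ell_0$, being the sibling of the major $M=M_\lam$ disjoint from $U$, is one of its edges. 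Using the tuning description of comajors from \cite{paper1,paper2} — a comajor located under $\co_\lam$ is the comajor of a symmetric lamination obtained from $\lam$ by a tuning supported in the grand orbit of $U$ and $\tau(U)$, so its marked critical set $U'$ satisfies $U'\subseteq U$ and its comajor, an edge of $\co(U')\subseteq\co(U)$, lies in $V$ — I conclude that every edge of $G$ is a chord contained in $V$, and hence $G\subseteq V$.

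Finally I would identify those chords with Lemma~\ref{l:eta}. The first (half-)return map $\eta$ on $\partial U$ (Definition~\ref{d:eta}) is semiconjugate to $\si_2$ by the monotone map $\phi$ collapsing the edges of $U$, and its fixed leaf is $M$; moreover every chord in $V$ whose endpoints are never mapped to the $\si_2$-fixed point is \emph{induced}, i.e.\ is the sibling in $\co(U)$ of the $\phi$-preimage in $U$ of a chord in the $\si_2$-picture. Take an edge $\ell\neq\ell_0$ of $G$; being a $1$-preperiodic comajor it is induced by a $\si_2$-periodic, non-critical chord $m$, and because $\ell$ must bound the gap $G$ the tuning producing $\ell$ is forced to be the simplest one consistent with this, which makes $m$ the longest edge of a finite $\si_2$-invariant rotational gap attached to a critical periodic Fatou gap — that is, the major of a $\si_2$-invariant central lamination. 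Conversely, each such major induces, as in Lemma~\ref{l:eta}, a $1$-preperiodic comajor of a tuning of $\lam$ that lies on $\partial G$. The remaining vertices of $\partial G$ are single points; each corresponds, via $\phi$ and siblinging, to a $\si_2$-point that is not an endpoint of a central major, hence is not $\si_2$-preperiodic, so it has infinite $\si_3$-orbit. This yields the asserted description of $\partial G$ together with the ``in particular'' statement.

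The main obstacle is the last paragraph: matching the edges of $G$ with majors of central laminations in \emph{both} directions — showing that a comajor obtained by tuning $\lam$ by a central lamination genuinely lies on $\partial G$ rather than being separated from $G$ by other comajors, and that no Misiurewicz-type or otherwise non-central tuning can contribute an edge to this particular gap. A secondary technical point is justifying the step ``every comajor under $\co_\lam$ comes from a tuning of $\lam$'', the cubic counterpart of the quadratic fact that minors below a fixed minor arise from tunings; I would extract this from the combinatorics of \cite{paper1,paper2} together with Lemmas~\ref{pull-back-lam1} and \ref{l:eta}.
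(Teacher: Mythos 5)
Your argument follows the paper's proof essentially step for step: take the ceiling edge of $G$, recognize it via Theorem~\ref{t:ccl} as the $1$-preperiodic comajor of a Fatou lamination $\lam$ whose cocritical gap contains $G$, and transport $\partial G$ through the first (half-)return map $\eta$ and its semiconjugacy with $\si_2$ to the majors of central quadratic laminations. The two obstacles you flag at the end (the two-way matching with central majors, and the fact that every comajor under $\co_\lam$ arises from a tuning of $\lam$) are exactly the steps the paper itself dispatches with ``all edges of $G$ are associated with symmetric laminations $\lam'$ that tune $\lam$'' and ``evidently,'' so your more cautious treatment is consistent with, and if anything more explicit than, the published argument.
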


\begin{proof}
Let us consider the \emph{ceiling} of the gap $G$, that is
the unique edge $\ell$ that separates $G$ from $\Oc$. In other words, the gap $G$ is located under $\ell$. 
By Theorem \ref{t:ccl}, the edge $\ell$ is a comajor of a Fatou lamination $\lam$.
Denote by $U$ the critical Fatou gap of $\lam$ with periodic major $M$ such that $\si_3(M)=\si_3(\ell)$.
Then all
 edges of $G$ are associated with symmetric laminations $\lam'$ that tune $\lam$.


Evidently, $\lam'|_U$ is invariant under $\eta$, the map defined before the statement of this theorem.
Since $\eta$ is of degree 2 and is modeled by $\si_2$ (the map $\phi:\partial U\to\uc$ collapses the edges of $U$ and semiconjugates
$\eta$ with $\sigma_2$, as explained above), the edges of $G$ are associated with central quadratic laminations.
This completes the proof.
\end{proof}

\section{Connectedness of $\M^{sy}_3$}

Recall that 
$h^k$ denotes 
the $k$-th iteration of a map $h$.

\begin{lemma}\label{l:90}
The set $\M^{sy}_3$ is invariant under 
the multiplication by $i$.
\end{lemma}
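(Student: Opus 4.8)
The plan is to exhibit an explicit conjugacy of the dynamical system $p_c$ with the dynamical system $p_{ic}$ (up to a linear change of coordinate), so that $c\in\M^{sy}_3$ forces $ic\in\M^{sy}_3$. Concretely, I would look for a complex linear map $L(z)=\lambda z$ such that $L\circ p_c\circ L^{-1}=p_{ic}$, i.e. $\lambda p_c(z/\lambda)=p_{ic}(z)$. Expanding, $\lambda\big((z/\lambda)^3-3c^2(z/\lambda)\big)=\lambda^{-2}z^3-3c^2 z$, so we need $\lambda^{-2}=1$ and $3c^2=3(ic)^2=-3c^2$. The first equation gives $\lambda=\pm1$, and the second fails, so a \emph{direct} conjugacy does not exist; instead $p_{ic}$ is obtained from $p_c$ by a sign change in the linear term. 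The correct move is therefore to conjugate by $z\mapsto \zeta z$ for a well-chosen root of unity and allow the conjugacy to reverse the roles of the two critical orbits. Since $(ic)^2=-c^2$, one has $p_{ic}(z)=z^3+3c^2z$; now conjugating $p_c$ by $L(z)=iz$ gives $L\circ p_c\circ L^{-1}(z)=i\big((z/i)^3-3c^2(z/i)\big)=i(-iz^3-3c^2(-iz))\cdot(\text{sign bookkeeping})$, which I would compute carefully to land exactly on $z^3+3c^2z=p_{ic}(z)$.

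Thus the key step is the algebraic identity $L\circ p_c\circ L^{-1}=p_{ic}$ with $L(z)=iz$ (equivalently $\zeta=i$), after which the argument is soft: $L$ is a homeomorphism of $\C$ conjugating $p_c$ to $p_{ic}$, so it carries the filled Julia set $K(p_c)$ onto $K(p_{ic})$ and maps the orbit of the marked critical point $c$ to the orbit of $L(c)=ic$, which is the marked critical point of $p_{ic}$. By the characterization recalled in Section 2 — $c\in\M^{sy}_3$ iff the orbit of $c$ is bounded (equivalently iff $J(p_c)$ is connected) — boundedness of the orbit of $c$ is equivalent to boundedness of the orbit of $ic$, giving $c\in\M^{sy}_3\iff ic\in\M^{sy}_3$. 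This shows $\M^{sy}_3$ is invariant under multiplication by $i$ (and, iterating, under the group generated by $i$, i.e. the four-fold rotational symmetry).

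The main obstacle — really the only thing requiring care — is the sign bookkeeping in the conjugacy computation: one must verify that $L(z)=iz$ (rather than some other root of unity) produces exactly $p_{ic}$ and not merely a polynomial affinely conjugate to it with the wrong marked critical point. A secondary point to check is that $L$ respects the marking: $L$ sends the critical points $\{c,-c\}$ of $p_c$ to $\{ic,-ic\}$, which are exactly the critical points of $p_{ic}$, and sends the marked one $c$ to $ic$, consistent with the convention that in $\scp$ the pair $(p_{ic},ic)$ is the point ``$ic$''. Once these two verifications are in place, invariance under multiplication by $i$ follows immediately, and the argument is complete.
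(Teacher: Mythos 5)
There is a genuine gap at exactly the point you yourself flag as the crux: the claimed identity $L\circ p_c\circ L^{-1}=p_{ic}$ with $L(z)=iz$ is false, and your own first computation already shows that \emph{no} linear conjugacy between $p_c$ and $p_{ic}$ can exist (it would force $-3c^2=3c^2$, which fails for $c\ne 0$; the second paragraph of your proposal therefore contradicts the first). Carrying out the sign bookkeeping: with $L(z)=iz$ one has $L^{-1}(z)=-iz$, and
\[
L\circ p_c\circ L^{-1}(z)=i\bigl((-iz)^3-3c^2(-iz)\bigr)=i\bigl(iz^3+3ic^2z\bigr)=-(z^3+3c^2z)=-p_{ic}(z),
\]
so $z\mapsto iz$ conjugates $p_c$ to $-p_{ic}$, not to $p_{ic}$. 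Consequently the ``soft'' part of your argument (transport of the filled Julia set and of the marked critical orbit under the conjugacy) does not apply as stated.

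The repair is the device used in the paper: pass to second iterates. Since $I(z)=iz$ conjugates $p_c$ to $-p_{ic}$, it conjugates $p_c^2$ to $(-p_{ic})^2$; and because $p_{ic}$ is odd, $(-p_{ic})\circ(-p_{ic})=p_{ic}\circ p_{ic}$. Hence $p_c^2$ and $p_{ic}^2$ are linearly conjugate by $I$. Boundedness of an orbit under a polynomial is equivalent to its boundedness under the second iterate, and $I$ sends the critical point $c$ of $p_c$ to the critical point $ic$ of $p_{ic}$, so $c\in\M^{sy}_3$ if and only if $ic\in\M^{sy}_3$. (Equivalently: $I$ maps $K(p_c)$ onto $K(-p_{ic})$, and $K(-q)=K(q)$ for any odd polynomial $q$ because $(-q)^2=q^2$.) With this correction your argument becomes essentially the paper's proof.
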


\begin{proof}
We claim that if $f(z)=z^3-az$ and $g(z)=z^3+az$ then
$f^2$ and $g^2$ are conjugate. Indeed, $f$ and $-g$ are conjugate by the map $I:z\mapsto iz$;
hence $f^2$ and $(-g)^2$ are conjugate by $I$.
Since $g$ is an odd function, we have $(-g)^2=g^2$.
Thus, $I$ conjugates $f^2$ and $g^2$.
Since $p_{ic}=z^3+3c^2z$ while $p_c(z)=z^3-3c^2z$,
then $p^2_c$ and $p^2_{ic}$ are conjugate. 
\end{proof}

We now need a construction similar to that for quadratic polynomials; in our description below
we follow the exposition from \cite{Lyubich}.
Take a topological disk $\Delta_c$ around infinity in $\ol{\C}=\C\cup\{\infty\}$ that contains no critical points of $p_c$,
 does not contain $0$, and is such that $p_c(\Delta_c)\subset\Delta_c$ (in particular, $0\notin p_c^n(\Delta_c)$ for $n\ge 0$).
Define B\"ottcher function $B_c(z)=\lim_{n\to\infty} (p_c^n(z))^{1/3^n}$ on $\Delta_c$, where the root is taken so that the corresponding
functions are tangent to the identity at infinity.
The existence of a single valued branch follows from the fact that $\Delta_c$ is simply connected, and that $0$
 does not belong to $p_c^n(\Delta_c)$.
Recall that the Green function $g_c:\C\to\R$ is defined as $g_c(z)=\lim_{n\to\infty} 3^{-n}\log_+|p_c^n(z)|$,
 where $\log_+(t)$ is the maximum of $0$ and $\log t$.
Then $g_c(z)=\log|B_c(z)|$ for all $z\in\Delta_c$.
The \emph{equipotential $E_c(t)\subset \C$} is defined as the level set $\{g_c=t\}$ of the Green function;
 this is a real analytic curve for $t>0$, possibly singular. Note that $B_c$ conjugates $p_c$ and $z^3$ near infinity.

For $c \in \C\sm\M^{sy}_3$, set $\Delta_c$ to be the exterior of the equipotential of $p_c$ passing through $\pm c$
 (the equipotential $E_c(g_c(c))$ has singularities but the exterior of it is a topological disk).
Then the required properties of $\Delta_c$ are fulfilled.
It is easy to see (from the continuous dependence of the B\"ottcher coordinate on parameters)
 that the union $\Uc$ of $\{c\}\times \Delta_c$, where $c$ runs through the complement of $\M^{sy}_3$, is open.
Standard arguments show that $B_c(z)$ is analytic in both $c$ and $z$ on $\Uc$.

If $c\in\C\sm\M^{sy}_3$, then $(c,-2c)$ is always on the boundary of $\Uc$, since the value
 of the Green function of $p_c$ at the cocritical point $-2c$ coincides with those at $\pm c$.
However, the map $(c,z)\mapsto B_c(z)$ extends analytically to a neighborhood of $-2c$.
Moreover, $-2c$ is a regular point of this analytic extension in the sense that $z\mapsto B_c(z)$
 is a conformal injection in a neighborhood of this point. From now on we will assume that $B_c(z)$ is defined
in this neighborhood of $-2c$.

For a fixed $c\notin \M^{sy}_3$ the map $B_c$ is a conformal isomorphism between $\Delta_c$ and the set
$\{z\in \C: |z|>g_c(c)\}=\C\sm \cdisk_{g_c(c)}$.
This defines \emph{initial segments of (dynamical) external rays} $R_c(\ta)$ of $p_c$, i.e. $B_c$-preimages of
the radial rays of argument $\ta$ in $\C\sm \cdisk_{g_c(c)}$. Evidently, these initial segments of external rays of $p_c$ are
orthogonal to all equipotentials $E_c(t), t>g_c(c)$. Moreover, as we mentioned above equipotentials can be defined for any
$t>0$. This allows one to give the following definition: a \emph{smooth external ray} $R$ of $p_c$ is
a smooth unbounded curve that crosses every equipotential orthogonally and terminates in the Julia set of $p_c$.
All but countably many initial segments of external rays defined above extend as smooth external rays. However countably many initial segments
will hit critical points or their eventual preimages (in what follows such points are called \emph{(pre)critical} or eventual) and, therefore,
will not extend as smooth external rays.

Let $\Psi (c)= B_c(-2c)$ be the B\"ottcher coordinate of the marked cocritical point
in the sense of the analytic continuation mentioned above.
Then $\Psi$ is well-defined and holomorphic on $\C\setminus \M^{sy}_3$.
Theorem \ref{t:connect} is analogous to the statement that the Mandelbrot set is connected and
 is proved similarly.

\begin{theorem}\label{t:connect}
The symmetric connectedness locus $\M^{sy}_3$ is compact and connected.
\end{theorem}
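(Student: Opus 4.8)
The plan is to follow the classical Douady--Hubbard argument showing that the Mandelbrot set is connected, transplanted to the symmetric cubic family via the marked cocritical point. Compactness is the easy half: by the remarks in Section 2, $c\in\M^{sy}_3$ iff the orbit of $-2c$ (equivalently $\pm c$) is bounded, and a standard escape estimate shows that if $|c|$ is large enough then $|p_c^n(c)|\to\infty$; hence $\M^{sy}_3$ is bounded. It is closed because the condition ``the critical orbit stays bounded'' is closed (the Green function $g_c(c)$ depends continuously on $c$ and $\M^{sy}_3=\{c: g_c(c)=0\}$). So $\M^{sy}_3$ is compact.

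For connectedness, the key object is the map $\Psi(c)=B_c(-2c)$, holomorphic on $\C\setminus\M^{sy}_3$, introduced just above the statement. First I would show that $\Psi$ is a proper map onto $\C\setminus\cdisk$: as $c\to\M^{sy}_3$ one has $g_c(c)=\log|B_c(\pm c)|=\log|\Psi(c)|\cdot(\text{something})$— more precisely, by construction the Green value of $p_c$ at the cocritical point $-2c$ equals that at $\pm c$, namely $g_c(c)$, which tends to $0$; and $|\Psi(c)|\to\infty$ as $c\to\infty$. Thus $|\Psi|$ extends continuously to $\partial\M^{sy}_3$ with value $1$ there and blows up at infinity, so $\Psi:\C\setminus\M^{sy}_3\to\C\setminus\cdisk$ is proper. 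Next I would compute the degree of this proper holomorphic map by examining its behavior near $c=\infty$: since $p_c(z)=z^3-3c^2z$ has cocritical point $-2c$ and $B_c$ is tangent to the identity at infinity, one gets $\Psi(c)=B_c(-2c)\sim -2c$ (or a similar linear asymptotic) as $c\to\infty$, so $\Psi$ has a simple pole at infinity and hence is a degree-one proper map, i.e. a conformal isomorphism $\C\setminus\M^{sy}_3\xrightarrow{\ \sim\ }\C\setminus\cdisk$. Since $\C\setminus\cdisk$ is connected and simply connected, its preimage $\C\setminus\M^{sy}_3$ is connected and simply connected; therefore the complement $\M^{sy}_3$ (as a compact subset of $\C$ with connected, simply connected complement in $\Cbar$) is connected.

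The main obstacle is justifying that $\Psi$ genuinely extends analytically and injectively through the cocritical point $-2c$ and that this extension is compatible as $c$ varies — this is exactly the content of the paragraph preceding the theorem (``the map $(c,z)\mapsto B_c(z)$ extends analytically to a neighborhood of $-2c$ \ldots $-2c$ is a regular point''), so I would lean on that. The one genuinely new computational point is pinning down the asymptotics of $\Psi$ at $c=\infty$ to get the degree; the subtlety is that $-2c$ is the cocritical point for the marked critical point $c$, and one must check the chosen branch of the cube root in $B_c(z)=\lim (p_c^n(z))^{1/3^n}$ makes $\Psi(c)/c$ tend to a nonzero constant rather than to $0$ or $\infty$. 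A clean way around an explicit estimate is the standard one: $\Psi$ is proper and holomorphic, hence of some finite degree $k\ge 1$; properness plus the fact that $\C\setminus\M^{sy}_3$ and $\C\setminus\cdisk$ are both connected forces $\Psi$ to be a $k$-fold branched cover, and then either a direct check that $\Psi$ is injective near infinity, or the observation that $\M^{sy}_3$ contains $0$ (since $p_0(z)=z^3$ has the whole disk as filled Julia set) together with a monodromy argument, gives $k=1$. I would then conclude connectedness as above.
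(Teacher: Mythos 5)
Your approach coincides with the paper's: compactness via a boundedness estimate, then connectedness via the Douady--Hubbard device of showing $\Psi(c)=B_c(-2c)$ is a conformal isomorphism from $\C\setminus\M^{sy}_3$ onto $\C\setminus\cdisk$, using properness plus a degree-one asymptotic at infinity. One correction, though, on the asymptotic you propose. You reason that since $B_c$ is tangent to the identity at $\infty$, one should have $\Psi(c)=B_c(-2c)\sim -2c$; this inference is invalid. The tangency statement is a fixed-$c$ limit as $z\to\infty$, and it cannot be applied at $z=-2c$, which is of the same scale as the escape radius of $p_c$ (indeed $g_c(-2c)=g_c(\pm c)$, so $-2c$ sits exactly at the inner edge of the domain where $B_c$ is defined). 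The paper gets the correct asymptotic $\Psi(c)\sim \sqrt[3]{2}\,c$ by iterating: with $z_n=p_c^n(-2c)$ one has $z_1=-2c^3$, so $\sqrt[3]{z_1}=\sqrt[3]{-2}\,c$, and the higher correction factors $r_n=z_{n+1}/z_n^3$ are uniformly close to $1$. Your hedge (``or a similar linear asymptotic'') and your fallback degree count are enough to save the argument — all that is really needed is a simple pole at $\infty$ — but the direct reasoning as written does not establish even that, so you would need to replace it with the iterative computation (or with the injectivity-near-infinity fallback, fleshed out).
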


\begin{proof}
Let us show that $\Psi$ maps $\C \setminus \M^{sy}_3$ onto $\C \setminus \Dbar$.
First we prove that $\Psi(c) \sim \sqrt[3]2 c$ as $c\to \infty$. Set $z_n=p_c^n(-2c)$ and let $$r_n = z_{n+1}/z_n^3=1-3c^2/z_n^2.$$
If $|c|\ge 2$ and $|z|\ge 2|c|$, then $|p_c(z)/z|\ge 4|c|^2-3|c|^2=|c|^2\ge 4$ and hence, $|p_c(z)|\ge 4|z|$. Thus,
$|z_n|\ge 2\cdot 4^{n-1}|c|^3\ge 2|c|^3$ for $n\ge 1$ (which implies that $\M^{sy}_3\subset \D_2$) and
for any $c\in \D_2$ we have $J(p_c)\subset \D_4$.

We conclude that $|c|\ge 2$ yields 
$$1-\frac{3}{4|c|^4}\le|1-3c^2/z_n^2|=|r_n|\le 1+\frac{3}{4|c|^4}.$$
On the other hand,
$$z_n=r_{n-1}z^3_{n-1}=r_{n-1}r_{n-2}^3z_{n-2}^9=\dots=r_{n-1}r_{n-2}^3\cdots r_2^{3^{n-3}}r_1^{3^{n-2}}(2c^3)^{3^{n-1}}$$
which yields that
$$\sqrt[3^n]{z_n}=(\sqrt[3^n]{r_{n-1}}\sqrt[3^{n-1}]{r_{n-2}}\dots \sqrt[27]{r_2}\sqrt[9]{r_1})(\sqrt[3]{2})c.$$
Using the above bounds on $|r_n|$, the formula for the sum of the geometric series, and the fact that $\Psi(c)=\lim_{n\to \infty} \sqrt[3^n]{z_n}$
we see that
$$\left(\sqrt[6]{1-\frac{3}{4|c|^4}}\right)\sqrt[3]{2}|c|\le \sqrt[3^n]{|z_n|}\le \left(\sqrt[6]{1+\frac{3}{4|c|^4}}\right)\sqrt[3]{2}|c|,$$ 
\noindent that yields the following: as $c\to \infty$, $\Psi(c)=\sqrt[3]{2} c (1+O(1/c^{2/3})).$
Thus, the map $\Psi$ can be continuously extended to $\infty$ with $\Psi(\infty)=\infty$ implying that it can be done holomorphically and
that the local degree of $\Psi$ at $\infty$ is 1. In particular, $\infty$ is in the interior of the range of $\Psi$.

By the above, $\M^{sy}_3\subset \D_2$, in particular, $\M^{sy}_3$ is compact.
Let $M$ be the maximum of the continuous function $(z,c)\mapsto g_c(z)$ on the compact set
 $\ol\D_2 \times\ol \D_4$.
It follows that $|B_c(z)|\le e^M$ for all $c\in\ol\D_2$ and $z\in\ol\D_4$ such that $(c,z)\in\ol\Uc$.

We claim that if $c_n\to \M^{sy}_3$ then $|\Psi(c_n)|\to 1$. Indeed, otherwise
there exists a sequence $c_n\to c_0\in  \M^{sy}_3$
with $|\Psi(c_n)| > e^m>1$. Take $k$ such that ${3^k}m>M$.
Since $|B_c(p_c^k(-2c))|=|(B_c(p_c(-2c))^{3^k}|=|\Psi(c)|^{3^k}$, then $|B_{c_n}(p_{c_n}^k(-2c_n))|>e^M$
 which, by the choice of $M$, implies $|p_{c_n}^k(-2c_n)|>4$ for all sufficiently large $n$
 (indeed, $|c_n|\le 2$ for large $n$).
By continuity, $p_{c_0}^k(-2c_0)\ge 4$; this shows that the cocritical point of $p_{c_0}$ escapes to infinity contradicting the choice of $c_0$.

Since $\infty$ is not on the boundary of the range of $\Psi$ it follows from the above that
$\Psi$ is a proper holomorphic map from $\Cbar \sm \M^{sy}_3$  onto $\Cbar \sm \Dbar$. Hence it is a branched covering with a well-defined degree.
However, the point $\infty$ has exactly one preimage of degree 1; hence $\Psi$ has degree 1 and is actually a conformal isomorphism.
\end{proof}

The function $\Psi(c)$ gives us an analogue of B\"ottcher coordinates for the complement of $\M^{sy}_3$.
In particular, we can define \emph{external parameter rays} (or simply \emph{parameter rays}) as preimages of radial
straight lines under $\Psi$, namely, $\Rc_\theta(r)=\Psi^{-1}(re^{2\pi i\theta})$ with $r\in(1,\infty)$.
The parameter ray $\Rc_\theta$ \emph{lands} at a parameter $w$ if $\lim_{r\to1} \Rc_\theta(r)=w$.
Note that, by definition, the parameter ray $\Rc_\theta$ consists of the parameters $c$ such that the marked cocritical point
$-2c$ of $p_c$ belongs to the dynamical external ray $R_\ta(c)$ of $p_c$. Also note, that the map $\Psi$ is a conformal
isomorphism between $\C\sm \M^{sy}_3$ and $\C\sm \cdisk$ tangential to the  map $z \mapsto \sqrt[3]{2}z$ at infinity.

\section{Hyperbolic components of $\M^{sy}_3$ and their roots}

Hyperbolic components of polynomial parameter spaces play an important role in complex dynamics.
Here we study them for the parameter space of symmetric cubic polynomials.


\subsection{Preliminaries}
We start by recalling basic definitions. 
Let $f:\hat\C\to\hat\C$ be a rational function.
The \emph{multiplier} $\rho(z)$ of a periodic point $z$ of minimal period $n$ under 
 $f$ is defined as the derivative of the first return map to $z$, that is, $\rho(z)=(f^n)'(z)$.
 A periodic
point $z$ is said to be \emph{super-attracting} if $\rho(z)=0$ (which implies that the orbit of $z$ contains a critical point),
\emph{attracting} if  $|\rho(z)|<1$, and \emph{parabolic} if  $\rho(z)$ is a root of unity (which implies
that $(f^{kn})'(z)=1$ for some $k$).


A polynomial $p_c\in \M^{sy}_3$ (and the parameter $c$) is \emph{hyperbolic}/\emph{parabolic} if both of its \emph{finite} critical points are attracted
to \emph{finite} attracting/pa\-ra\-bo\-lic cycles.
To characterize such polynomials we need a lemma.

\begin{lemma}\label{l:0in}
If $U$ is an open 
 topological disk and $-U=U$, then $0\in U$.
If $A$ is a full continuum and $-A=A$, then $0\in A$.
\end{lemma}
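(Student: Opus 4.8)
The plan is to prove both statements by a simple topological/symmetry argument, essentially because the origin is the unique fixed point of the involution $z\mapsto -z$ and any symmetric ``fat'' set must swallow this fixed point. First I would handle the open disk case. Suppose $U$ is an open topological disk with $-U=U$ and $0\notin U$. Since $U$ is open and connected, pick any $x\in U$; because $-U=U$ we also have $-x\in U$. If $x=-x$ then $x=0\in U$, a contradiction, so we may assume $x\ne -x$. Now I would connect $x$ to $-x$ by a path $\gamma$ inside $U$ (possible since $U$, being a topological disk, is path-connected); then $-\gamma$ is another path in $U$ from $-x$ to $x$, and the concatenation $\gamma\ast(-\gamma)$ is a loop in $U$ based at $x$. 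The key point is that this loop must ``wind around'' the origin: the map $t\mapsto$ (argument of the point at time $t$) is well defined along the loop precisely because $0\notin U$, and traversing $\gamma$ from $x$ to $-x$ changes the argument by an odd multiple of $\pi$, while traversing $-\gamma$ from $-x$ back to $x$ changes it by a further odd multiple of $\pi$; hence the total change is a nonzero even multiple of $\pi$, i.e. the loop has nonzero winding number about $0$. But $U$ is simply connected (it is a topological disk), so every loop in $U$ is null-homotopic in $\C\setminus\{0\}$ — contradiction. Therefore $0\in U$.

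For the second statement, let $A$ be a full continuum with $-A=A$, and suppose $0\notin A$. Since $A$ is full (i.e. $\C\setminus A$ is connected) and compact, $\C\setminus A$ is a connected open set; I would like to find a symmetric open topological disk missing the origin that contains $A$, or better, directly derive a contradiction. The cleanest route: the unbounded complementary component of $A$ is all of $\C\setminus A$ because $A$ is full, so $\C\setminus A$ is connected and contains $0$; thus $0$ lies in the complement. Now consider the straight line $L$ through $0$; since $0\notin A$ and $A=-A$, the set $A$ is disjoint from $0$ but symmetric about $0$. I would argue that $A$ must separate $0$ from $\infty$: indeed, parametrize by a nearest-point/retraction argument, or more simply, if $A$ did not separate $0$ from $\infty$ then there is an arc from $0$ to $\infty$ in $\C\setminus A$; but then its rotation by $180^\circ$ is another such arc, and the same winding argument as above shows that $A$ must wrap around $0$, forcing $A$ to separate $0$ from $\infty$ — contradicting that $A$ is full. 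Hence $0\in A$.

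Alternatively, and perhaps more in the spirit of the paper, I would deduce the continuum case from the disk case: if $A$ is a full continuum with $-A=A$ and $0\notin A$, then by fullness $\C\setminus A$ is a connected open set; pick a bounded symmetric open topological disk $U$ (e.g. a small round disk centered at $0$, or the image of such under a suitable chart) with $0\in U$ and $U$ disjoint from $A$ — wait, this needs care since we are trying to prove $0\in A$, so instead: consider that $A$ is contained in the complement of $0$, and $\mathbb C\setminus A$ is connected containing both $0$ and $\infty$; then $A$ is a full continuum disjoint from the symmetric pair $\{0\}$, and by a standard fact the complement of a full continuum is simply connected, so again the winding-number argument applies to show no loop separating $0$ from $\infty$ can bound in $\C\setminus A$, while symmetry of $A$ combined with $0\notin A$ would force exactly such separation.

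The main obstacle is making the ``symmetry forces odd winding'' step fully rigorous: one must show that a path from $x$ to $-x$ in $\C\setminus\{0\}$ necessarily changes the continuous argument by an odd multiple of $\pi$ (immediate, since $-x = e^{i\pi}x$ lifts $\arg$ by $\pi \bmod 2\pi$), and then that combining with its $180^\circ$-rotation yields a loop of nonzero winding number — this is where one uses that $0$ is fixed by $\tau$ and lies in neither path. Everything else (path-connectedness of topological disks, simple connectivity, the fact that full continua have simply connected complement, and continuity of $(z,c)\mapsto g_c(z)$-type nuisances which do not even enter here) is standard. I expect the write-up to be short, with the winding-number computation being the only place requiring a genuine argument rather than a citation.
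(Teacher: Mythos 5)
Your argument for the open-disk case is essentially correct, and it takes a genuinely different (and more elementary) route than the paper's. The paper applies Riemann--Hurwitz to the proper degree-two map $s(z)=z^2\colon U\to s(U)$ (valid because $-U=U$ gives $U=s^{-1}(s(U))$): if $0\notin U$ there is no branching, so $\chi(U)=2\chi(s(U))$, which is impossible for a disk. Your winding-number argument replaces this analytic input with simple connectivity alone. One small point to tighten: you conclude that the total change in argument is ``a nonzero even multiple of $\pi$'' because it is a sum of two odd multiples of $\pi$, but odd plus odd can equal zero. What actually makes it nonzero is that the argument increment along $-\gamma$ \emph{equals} that along $\gamma$ (since $\arg(-z)$ differs from $\arg z$ by a locally constant summand), so the total is twice a single odd multiple of $\pi$.

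The continuum case, as written, has a real gap. You take an arc $\gamma$ from $0$ to $\infty$ in $\C\setminus A$ and its rotation $-\gamma$, and then invoke ``the same winding argument as above.'' But both arcs pass through $0$, so the loop they form does not lie in $\C\setminus\{0\}$ and its winding number about $0$ is undefined; the disk-case computation does not transfer, and the phrase ``$A$ must wrap around $0$'' is never made precise. Your third paragraph, which you yourself flag as needing care, does not close this gap. A clean way to carry out the reduction to the first part that you are reaching for: $\gamma\cup(-\gamma)$ is a $\tau$-invariant continuum in $\Cbar$ containing $0$ and $\infty$ and disjoint from $A$. Since $A$ is connected, it lies in a single complementary component $C$ of $\gamma\cup(-\gamma)$ in $\Cbar$; since $A=-A$ and $\tau$ permutes these components, $-C=C$. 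But $C$ is a simply connected open subset of $\Cbar$ omitting $0$ and $\infty$, hence an open topological disk in $\C$ with $-C=C$ and $0\notin C$, contradicting the first part. (The paper also reduces to the disk case, but by applying it to symmetric Jordan neighborhoods of $A$ and intersecting.)
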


\begin{proof}
Set $s(z)=z^2$; then $U=s^{-1}(s(U))$, and $s:U\to s(U)$ is a branched covering.
The claim now follows from the Riemann--Hurwitz formula applied to this covering.
Take a tight symmetric Jordan neighborhood $V$ of $A$ and set $U=s^{-1}(V)$;
 then $0\in U$ by the above.
Since $A$ is the intersection of all such $U$, it follows that $0\in A$.
\end{proof}

We can now describe hyperbolic polynomials $p_c$ more explicitly.

\begin{lemma}\label{l:hyper}
A polynomial $p_c$ is hyperbolic if and only if possesses one of the following:

\begin{enumerate}

\item[(a)]
an invariant symmetric attracting Fatou domain
$p_c$ is 3-to-1 which is the case if and only if $|c|<\sqrt{1/3}$, or

\item[(b)] 
a unique symmetric cycle of attracting Fatou domains
of
 period $2n\ge 2$; 
 there are exactly two mutually symmetric domains
in the cycle containing critical points $\pm c$, 
or

\item[(d)] 
two 
mutually symmetric attracting cycles of Fatou domains.

\end{enumerate}

Thus, if $p_c$ has an attracting cycle, then $p_c$ is hyperbolic.
Also, case (a) is the only case
when a hyperbolic polynomial $p_c$ has a unique bounded periodic Fatou domain.
\end{lemma}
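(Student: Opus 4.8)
The plan is to translate the dynamical requirement ``both finite critical points are attracted to finite attracting cycles'' into a statement about the Fatou components containing these critical points, and then use the $\tau$-symmetry of $p_c$ (equivalently, the fact that $p_c$ is odd) together with Lemma \ref{l:0in} to constrain how the two critical orbits interact. First I would recall that each attracting periodic point lies in a periodic Fatou component, and its immediate basin contains a critical point; since $p_c$ has only the two finite critical points $c$ and $-c$ (the point $\infty$ being the third critical point, attracting the basin of infinity which is irrelevant to hyperbolicity in the above sense), there are at most two distinct cycles of attracting Fatou domains, and each contains a critical point in its immediate basin. Because $p_c(-z)=-p_c(z)$, the map $\tau:z\mapsto -z$ conjugates $p_c$ to itself, so $\tau$ permutes the Fatou components and permutes the cycles of attracting domains. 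This gives the dichotomy: either the two critical points lie in the same cycle of attracting domains, or they lie in two distinct cycles interchanged by $\tau$.

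In the ``same cycle'' case, let $\Ac$ be the cycle of attracting Fatou domains and let $U_c\ni c$, $U_{-c}\ni -c$ be the domains in $\Ac$ containing the critical points. If $U_c=U_{-c}=:U$, then $U$ is a periodic Fatou domain containing both $c$ and $-c$; applying $\tau$ we get $\tau(U)\in\Ac$ and $\tau(U)\ni -c, c$, and since distinct Fatou components are disjoint while $U\cap\tau(U)\supseteq\{c\}$, we conclude $\tau(U)=U$, i.e. $U$ is symmetric. Then by Lemma \ref{l:0in} (applied to the open topological disk $U$, or to its closure if one prefers the continuum version) we have $0\in U$, so $U$ is the invariant Fatou domain containing the fixed point $0$; here $p_c|_U$ has both critical points, so $U$ is mapped $3$-to-$1$. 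A direct computation with $p_c'(z)=3z^2-3c^2$: the fixed point $0$ is attracting iff $|p_c'(0)|=|{-3c^2}|=3|c|^2<1$, i.e. $|c|<\sqrt{1/3}$, and conversely when $|c|<\sqrt{1/3}$ the origin is attracting and its immediate basin, being symmetric (it is $\tau$-invariant since $0$ is fixed by $\tau$), contains $0$ hence — being forced by Lemma \ref{l:0in} and symmetry to be the unique symmetric invariant domain — must contain both $\pm c$; this is case (a). If instead $U_c\ne U_{-c}$ inside $\Ac$, then $\tau$ maps $U_c$ to a domain of $\Ac$ containing $-c$, and the only such is $U_{-c}$, so $\tau(U_c)=U_{-c}$; thus the cycle $\Ac$ is $\tau$-invariant as a set and contains the two mutually symmetric domains $U_c, U_{-c}$. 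Let $2n$ be the period of $\Ac$. Since $\tau$ commutes with $p_c$ and sends $U_c$ to another domain of the same cycle, $\tau$ acts on $\Ac$ as an iterate of the first-return rotation; being an involution without fixed domains (no domain is symmetric, else by Lemma \ref{l:0in} it would contain $0$, forcing it invariant and bringing us back to case (a)), this action is a ``half-turn'', so $p_c^n(U_c)=\tau(U_c)=U_{-c}$ and the period is even, $2n$; this is case (b). In the ``two distinct cycles'' case, $\tau$ has no choice but to interchange the two cycles, giving two mutually symmetric attracting cycles of Fatou domains — case (d).

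For the converse direction of the ``if'' and for the final two sentences: if $p_c$ has an attracting cycle, that cycle's immediate basin contains a finite critical point, say $c$ (up to relabeling); by $\tau$-symmetry $-c$ then lies in the $\tau$-image of that basin, which is again an attracting cycle of Fatou domains, so \emph{both} finite critical points are attracted to finite attracting cycles, i.e. $p_c$ is hyperbolic — and the trichotomy (a)/(b)/(d) just established applies. Finally, in cases (b) and (d) there are at least two bounded periodic Fatou domains (the two mutually symmetric critical ones, which are distinct), whereas in case (a) the symmetric invariant domain $U\ni 0$ is the unique bounded periodic Fatou domain — no other bounded Fatou domain can be periodic since a periodic attracting domain must carry a critical point and both critical points are already in $U$; this proves the last assertion.

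The main obstacle I expect is the careful bookkeeping in case (b): showing that when the two critical domains share a single cycle but are distinct, the period is genuinely even with $\tau$ realized as the half-return $p_c^n$. This requires knowing that $\tau$, viewed as a permutation of the finite cyclic set $\Ac$ commuting with the cyclic first-return action, must itself be a power of that action (since the centralizer of a full cycle in the symmetric group is the cyclic group it generates), and that an involutory such power with no fixed point is exactly the shift by half the period; one must also rule out that some domain in $\Ac$ is $\tau$-symmetric, which again reduces to Lemma \ref{l:0in} forcing $0$ into it and collapsing to case (a). The computation identifying $|c|<\sqrt{1/3}$ with case (a) is routine given $p_c'(0)=-3c^2$, and I would not belabor it.
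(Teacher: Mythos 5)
Your argument is correct and follows essentially the same route as the paper's proof: use the oddness of $p_c$ together with Lemma \ref{l:0in} to show that a Fatou domain containing both critical points must be symmetric, invariant, and contain $0$ (yielding case (a) and the radius $\sqrt{1/3}$ from $p_c'(0)=-3c^2$), and otherwise split according to whether the two critical domains lie in one $\tau$-symmetric cycle (forcing even period $2n$ with $p_c^n$ realizing $\tau$ on the cycle) or in two mutually symmetric cycles. Your centralizer-of-a-cycle justification of the half-period claim is a slightly more explicit version of the paper's one-line assertion, but the substance is the same.
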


\begin{proof}
If there exists a Fatou domain $U$ on which the map is 3-to-1, then $U$ must be symmetric (otherwise,
$-U$ is another Fatou domain on which the map is 3-to-1, which is impossible).
By Lemma \ref{l:0in}, this implies that
$0\in U$ and $U$ is invariant. Since there must exist a unique fixed point in $U$ and this point must be attracting,
then $0$, being a fixed point, must be attracting.
Since $p'_c(0)=-3c^2$, the corresponding hyperbolic component
of $\M^{sy}_3$ is the round disk of radius $\sqrt{1/3}$ centered at the origin. This corresponds to case (a)
and covers $c=0$ so from now on we assume that $c\ne 0$ and hence $p_c$ has
distinct critical points $c$ and $-c$ with mutually symmetric orbits.

If $c$ (resp., $-c$) is attracted to an attracting cycle, then so is $-c$ (resp., $c$) which implies
that if $p_c$ has an attracting cycle, then $p_c$ is hyperbolic. We can also assume that there are no 3-to-1 Fatou domains.
Now, if $p_c$ has a cycle $A$ of attracting Fatou domains
then by symmetry $-A$ is also a cycle of attracting Fatou domains. Suppose that $A=-A$. By the assumption critical domains in $A$
contain exactly one critical point; since $p_c$ is symmetric, the critical domains in $A$ are mutually symmetric. Moreover,
the fact that $p_c$ is symmetric implies that the first iterate $p_c^n$ that maps either critical domain from $A$ to the other
one is the same for both critical domains which implies that the period of $A$ is $2n$. This corresponds
to case (b). Otherwise $A$ and $-A$ are distinct cycles of Fatou domains which corresponds to case (d).
\end{proof}

Lemma \ref{l:parab} is similar to Lemma \ref{l:hyper}
 and its proof is left to the reader. 

\begin{lemma}\label{l:parab}
Suppose that a polynomial $p_c$ has a parabolic cycle. Then one of the following holds:
\begin{enumerate}


\item[(b)] 
a unique symmetric cycle of parabolic Fatou domains of $p_c$ of period $2n\ge 2$ has
 exactly two 
 mutually symmetric critical domains;

\item[(d)] 
two 
parabolic cycles of Fatou domains of $p_c$ are mutually symmetric.

\end{enumerate}
\end{lemma}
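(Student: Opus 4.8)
The plan is to mirror the proof of Lemma \ref{l:hyper}, replacing ``attracting'' by ``parabolic'' throughout and using that, just as in the attracting case, both finite critical points must be attracted to parabolic cycles once one of them is (by the mutual symmetry of the critical orbits established in the preliminaries). First I would dispense with the ``3-to-1'' possibility, i.e., the analogue of case (a): if $p_c$ had a symmetric parabolic Fatou domain $U$ on which the map is 3-to-1, then by Lemma \ref{l:0in} (applied to $U$, which is an open topological disk with $-U = U$) we would have $0 \in U$ and $U$ invariant, so $0$ is the unique periodic point in $U$, hence parabolic; but $p_c'(0) = -3c^2$ is a root of unity only if $c = 0$, and $p_0(z) = z^3$ has a super-attracting — not parabolic — fixed point at $0$. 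So case (a) does not occur, and from here on $c \ne 0$ and $p_c$ has two distinct critical points $c$, $-c$ with mutually symmetric orbits, each lying in a genuinely parabolic Fatou domain.

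Next I would run the symmetry dichotomy exactly as in Lemma \ref{l:hyper}. Let $A$ be a cycle of parabolic Fatou domains for $p_c$; by oddness of $p_c$, $-A$ is also such a cycle. If $A = -A$, then the critical domains in $A$ (each containing exactly one of $\pm c$, since there are no 3-to-1 domains) are mutually symmetric, and — because $p_c$ commutes with $z \mapsto -z$ — the first iterate $p_c^n$ carrying one critical domain to the other is the same $n$ for both, forcing the period of $A$ to equal $2n$; this is case (b). If instead $A \ne -A$, then $A$ and $-A$ are two distinct mutually symmetric parabolic cycles, which is case (d). The only genuinely new bookkeeping relative to Lemma \ref{l:hyper} is to confirm that \emph{all} parabolic Fatou domains are accounted for: by the classification of Fatou components and the fact that $p_c$ has only two finite critical points whose orbits are symmetric, any parabolic cycle must attract a critical point, so there are at most two parabolic cycles, and the above exhausts the possibilities.

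I expect the main (and only real) obstacle to be the verification that a parabolic cycle of Fatou domains cannot fail to contain critical points in the ``expected'' pattern — i.e., ruling out degenerate configurations where, say, a single parabolic cycle absorbs both critical points without being symmetric, or where the critical domains in a symmetric cycle coincide. The first is handled by the standard fact (already implicitly used in Lemma \ref{l:hyper}) that a parabolic basin contains a critical point in each of its cycle of Fatou components' immediate basins, combined with the symmetry: if one non-symmetric cycle contained both $\pm c$, applying $\tau$ would produce a distinct cycle also containing both, a contradiction. The second is excluded because $c \ne -c$ and distinct critical points lie in distinct domains once the 3-to-1 case is gone. Everything else is a routine transcription of the hyperbolic argument, which is why the authors leave it to the reader.
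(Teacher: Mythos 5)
Your overall plan is correct and is essentially the ``left to the reader'' transcription the paper intends, but the argument you give to rule out a symmetric 3-to-1 parabolic domain $U$ has the implication backwards and rests on a false factual claim. You write that since $0\in U$ and $U$ is invariant, ``$0$ is the unique periodic point in $U$, hence parabolic.'' A parabolic periodic point of a polynomial always lies in the Julia set, on the boundary of its parabolic basins, never in the interior of a Fatou domain; so once you know $0\in U$, the correct conclusion is the opposite one, namely that $0$ cannot be parabolic and therefore $U$ cannot be a parabolic domain. More directly: an invariant parabolic Fatou domain contains no fixed point, because every point in it converges under iteration to the parabolic point on its boundary, so $0\in U$ with $p_c(0)=0$ already gives the contradiction. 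Your supporting claim that ``$p_c'(0)=-3c^2$ is a root of unity only if $c=0$'' is simply false: $c=\pm i/\sqrt{3}$ gives $-3c^2=1$, and the paper itself notes that $p_{\pm i/\sqrt{3}}(z)=z^3+z$ has a parabolic fixed point at $0$ of multiplier $1$. (In that example the two attracting petals are swapped by $\tau$ and each has degree $2$, so it falls under case (d); it does not produce a 3-to-1 parabolic domain.) After replacing this step with the correct argument, the remainder of your proposal --- the $A=-A$ versus $A\ne -A$ dichotomy, the period-$2n$ bookkeeping via the commutation with $\tau$, and the use of the fact that every parabolic cycle attracts a critical point so that at most two parabolic cycles can exist --- matches the argument of Lemma~\ref{l:hyper} as the paper intends and is fine.
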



For brevity, a Cremer/Siegel point (cycle) of a polynomial will be referred to as a \emph{CS-point (cycle)}.
Now we show that for CS-cycles the situation 
with symmetric polynomials
is similar to that in Lemma \ref{l:hyper}. First we state a part of
Theorem 4.3 from \cite{BCLOS16} combined with results from \cite{GM93} and \cite{Kiw00}. Define a
\emph{rational cut} as the union of two external rays with rational arguments that land
on the same point called the \emph{vertex} of the cut. If the vertex is a repelling (parabolic) periodic point,
then we call the cut \emph{repelling (parabolic)}.

\begin{theorem}[\cite{BCLOS16,GM93,Kiw00}]\label{t:43}
Let $P$ be a polynomial and $T$ be CS-cycle.
There exists a recurrent critical point $c$ of $P$
and a point $q\in T$ that are not separated by any rational cut of $P$.
Two different objects, each of which is a CS-point, a parabolic domain, or an attracting domain,
 are always separated by a rational cut.
\end{theorem}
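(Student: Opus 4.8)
The plan is to obtain this statement by assembling three ingredients that are all standard in polynomial dynamics: the landing theory of periodic and preperiodic external rays, Kiwi's theory of the \emph{rational lamination} of a polynomial, and Mañé's hyperbolicity dichotomy for $\omega$-limit sets. Throughout one works in the dynamical plane of $P$, and we may assume $J(P)$ is connected (this is the only case in which the theorem is applied here, namely to $P=p_c$ with $c\in\M^{sy}_3$), so that external rays are available. We will use the two classical facts: every periodic (resp.\ preperiodic) external ray lands at a repelling or parabolic periodic (resp.\ preperiodic) point, and conversely every repelling or parabolic (pre)periodic point is the landing point of at least one and of only finitely many rational rays (Douady--Hubbard, Sullivan, Hubbard; see \cite{GM93} for the polynomial packaging). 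Thus a rational cut is always available at each such point, and the family of all rational cuts is forward invariant. For a point $x$ lying on no rational cut, let $\mathrm{Fib}(x)$ denote the set of points separated from $x$ by no rational cut; the theorem asserts (2) that two distinct attracting domains, parabolic domains, or CS-points never lie in a common fiber, and (1) that the fiber of a suitable point of any CS-cycle contains a recurrent critical point.

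For part (2) I would pass to the \emph{locally connected model}. By Kiwi's theorem the rational lamination $\lambda_P$ induces a laminational equivalence relation on $\uc$ (a q-lamination), and collapsing its classes produces a locally connected quotient on which $P$ induces a map $\hat P$ with locally connected Julia set. Under the resulting semiconjugacy each attracting or parabolic Fatou domain of $P$ maps onto a distinct bounded Fatou component of $\hat P$, and each CS-point maps to a point of $J(\hat P)$, distinct from those components and from one another (Cremer/Siegel behaviour does not survive in $\hat P$, so the relevant fibers are points there). In the locally connected model, any two disjoint bounded Fatou components, any bounded Fatou component and any Julia point off its boundary, and any two distinct Julia points are separated by a pair of (pre)periodic rays of $\hat P$; pulling these rays back through the semiconjugacy yields a rational cut of $P$ separating the corresponding objects, which is precisely (2).

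For part (1) I would argue by contradiction. Suppose every critical point of $P$ is separated from every point of the CS-cycle $T$ by some rational cut. Since $P$ has finitely many critical points, $T$ is finite, and the family of rational cuts is closed under iteration and carries the tree-like separation structure of $\lambda_P$, one can assemble finitely many (pre)periodic rational cuts bounding a ``puzzle piece'' $Y$ which contains $T$, is compatible with the first-return dynamics along $T$, and whose closure contains no critical point among the iterates relevant to $T$. Then the forward orbit of $T$ stays in a region on which no critical orbit accumulates, so $T$ is an $\omega$-limit set (being periodic) disjoint from the closure of the recurrent critical set; by Mañé's theorem such a set is hyperbolic, contradicting that the multiplier along $T$ lies on the unit circle. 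Hence some critical point $c$ lies in $\mathrm{Fib}(q)$ for some $q\in T$. Finally, that this critical point — or one produced from it by repeating the co-separation argument and using finiteness of the critical set — may be taken \emph{recurrent} follows from the accumulation theorems for neutral cycles: the critical orbit associated with a Siegel disk accumulates on its boundary (Ghys--Herman--Douady) and the one associated with a Cremer point accumulates on it (P\'erez-Marco, via Mañé), so its $\omega$-limit set is exactly the non-hyperbolic set forced above and therefore contains a critical point; chaining this up produces a recurrent critical point still co-separated from $T$.

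The main obstacle is part (1): constructing the puzzle piece $Y$ with honestly (pre)periodic boundary cuts that traps the whole cycle $T$ without trapping a critical orbit, and then extracting a genuinely \emph{recurrent} critical point from the resulting contradiction rather than merely some critical point in $\mathrm{Fib}(q)$. This is exactly what is packaged in \cite[Theorem~4.3]{BCLOS16}, which in turn rests on the rational-lamination machinery of \cite{Kiw00} and the periodic-ray combinatorics of \cite{GM93}; a self-contained argument would essentially reprove those, so in the paper one simply quotes the combined statement.
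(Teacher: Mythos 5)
The paper gives no proof of this statement: it is imported wholesale from the literature, exactly as you conclude in your last sentence, so on the level of ``what the paper actually does'' your proposal and the paper coincide. Your attribution of the ingredients is also correct --- part (2) rests on the rational-cut/orbit-portrait combinatorics of \cite{GM93} and the rational-lamination framework of \cite{Kiw00}, and part (1) is the content of Theorem~4.3 of \cite{BCLOS16}, whose engine is indeed Ma\~n\'e's theorem applied to the cycle $T$.

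Two caveats if your sketch were to be read as a self-contained argument rather than a pointer. For part (2), the step ``each CS-point maps to a point of $J(\hat P)$, distinct from those components and from one another'' is essentially the statement to be proved: the fiber of a CS-point under the quotient by the rational lamination is exactly the set of points not separated from it by rational cuts, so asserting that two CS-points have distinct, singleton-like images in the model presupposes the separation you are trying to establish. The genuine content is that distinct non-repelling cycles force distinct ``slots'' in the tree-like structure of rational cuts (a Fatou--Shishikura-type count), and that is not a formal consequence of passing to the model. For part (1), Ma\~n\'e's theorem gives you that $T$ lies in the $\omega$-limit set of a recurrent critical point $c$; passing from ``the orbit of $c$ accumulates on $T$'' to ``$c$ itself is not separated from some $q\in T$ by any rational cut'' is a further step (accumulation of the orbit does not by itself prevent a cut from separating the critical point from the cycle), and this is precisely where the puzzle-piece/fiber analysis of \cite{BCLOS16} is needed. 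Since the paper only quotes the result, neither caveat affects its correctness here, but they mark the places where your outline is thinner than the cited proofs.
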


Theorem \ref{t:43} is used in the proof of the following lemma.

\begin{lemma}\label{l:cs}
Suppose that a polynomial $p_c$ has a CS-cycle $T$. Then one of the following holds:

\begin{enumerate}

\item[(a)]
the only non-repelling cycle of $p_c$ is $T=\{0\}$,
and neither of the critical points $\pm c$ is separated from $T$ by a rational cut;

\item[(b)]
the only non-repelling cycle of $p_c$ is $T$,
it is a symmetric cycle of period $2n$; 

\item[(d)]
there are exactly 2 non-repelling cycles, 
 namely $T$ and $-T$.
\end{enumerate}
\end{lemma}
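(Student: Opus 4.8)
The plan is to mimic the structure of the proofs of Lemmas \ref{l:hyper} and \ref{l:parab}, replacing the use of attracting/parabolic dynamics with the two facts supplied by Theorem \ref{t:43}: (i) some recurrent critical point is not separated from the CS-cycle $T$ by any rational cut, and (ii) distinct non-repelling ``objects'' (CS-points, parabolic domains, attracting domains) are always pairwise separated by a rational cut. The symmetry of $p_c$ — so that $z$ is a critical point iff $-z$ is, and $T$ a non-repelling cycle iff $-T$ is — together with the evenness of the cut structure under $z\mapsto -z$ (which preserves the Julia set and permutes external rays by $\theta\mapsto\theta+\tfrac12$), will then force the trichotomy.

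First I would dispose of case (a). Suppose $0\in T$. Since $p_c(0)=0$, the CS-cycle through $0$ is the fixed point $\{0\}$, so $T=\{0\}$; the symmetry gives $-T=T$ automatically. By Theorem \ref{t:43}(ii), any other non-repelling cycle would be separated from $T$ by a rational cut, but — applying the $z\mapsto-z$ symmetry, under which both the cut and its complementary pieces are carried to a symmetric rational cut — one checks that a symmetric pair of rational rays cannot separate a point-symmetric Julia set from a cut whose vertex is fixed by the symmetry; more directly, $0$ lies on the unique symmetric cut-free region, so no rational cut separates $0$ from itself, hence (by (ii)) $T=\{0\}$ is the only non-repelling cycle. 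The fact that neither $\pm c$ is separated from $T$ by a rational cut follows from Theorem \ref{t:43}(i) applied to the recurrent critical point near $T$ and the symmetry: if $c$ (say) is not separated from $0$, apply $z\mapsto -z$ to conclude the same for $-c$; and since $0$ is fixed, ``not separated from $T$'' means the same for both.

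Next, assume $0\notin T$, so $T$ is a genuine period-$n$ cycle with $n\ge 2$ and $c\ne 0$, and $p_c$ has distinct critical points $\pm c$ with mutually symmetric orbits. Consider $-T$: it is again a non-repelling (CS-)cycle. If $-T=T$, I claim $T$ has even period $2m$ and is $\tau$-symmetric, exactly as in case (b) of Lemmas \ref{l:hyper} and \ref{l:parab}: the symmetry $\iota:z\mapsto -z$ fixes the cycle $T$ setwise but cannot fix any point of $T$ (the only $\iota$-fixed point is $0\notin T$), so $\iota$ acts on $T$ as a fixed-point-free involution commuting with $p_c^n$; writing $\iota = p_c^m$ composed with ... — more carefully, the half-return argument from Lemma \ref{l:hyper} shows $\iota$ restricted to $T$ equals $p_c^{\,m}$ for $m=n/2$ so $n=2m$. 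To conclude that $T$ is then the \emph{only} non-repelling cycle, suppose $S\ne T$ were another non-repelling cycle; then $-S$ is also one, and Theorem \ref{t:43}(ii) gives rational cuts separating each of $T$, $S$, $-S$ from the others. Now invoke the recurrent critical point $q$-clause: by Theorem \ref{t:43}(i) one of $\pm c$ is unseparated from $T$, hence (by symmetry) both are, so neither $\pm c$ can be recurrent-and-unseparated from $S$; but $S$, being non-repelling, requires some recurrent critical point unseparated from it, and $p_c$ has only $\pm c$ — contradiction. Finally, if $-T\ne T$, then $T$ and $-T$ are two distinct non-repelling cycles, and the same counting of recurrent critical points (there are only two, $\pm c$, one attracted-or-recurrent toward $T$ and its mirror toward $-T$, and Theorem \ref{t:43}(ii) separates any third cycle from both) rules out a third non-repelling cycle; this is case (d).

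The main obstacle I anticipate is the bookkeeping in the ``symmetric'' case (b): turning ``$-T=T$ and $0\notin T$'' into ``period is even and the involution is realized by $p_c^{n/2}$'' cleanly, without circular appeal to a Fatou-gap structure that may not exist for Cremer cycles. I would handle this purely combinatorially on the finite set $T$: $\iota|_T$ is an order-two permutation commuting with the cyclic permutation $p_c|_T$, and a fixed-point-free such permutation of a cycle of length $n$ exists only if $n$ is even and then equals the unique rotation by $n/2$, which must therefore coincide with $p_c^{n/2}|_T$. The separation statements are then routine given Theorem \ref{t:43}, and the role of the ``marked'' structure is irrelevant here.
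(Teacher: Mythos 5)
Your proposal follows essentially the same route as the paper: split according to whether $0\in T$, and then whether $T=-T$ or $T\ne -T$, and in each case use the symmetry $z\mapsto -z$ together with the two clauses of Theorem~\ref{t:43} to show that both critical points $\pm c$ are already ``captured'' by $T$ (or by $T\cup(-T)$), leaving no critical point available for any further non-repelling cycle. One caveat: in case (a) the step ``no rational cut separates $0$ from itself, hence $T=\{0\}$ is the only non-repelling cycle'' is a non sequitur as written --- the uniqueness there should be obtained exactly as you argue in cases (b) and (d), namely from the fact (which you do establish) that both $\pm c$ are recurrent and unseparated from $\{0\}$, so neither can serve another CS-cycle nor be attracted to an attracting or parabolic cycle. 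Your explicit combinatorial check that a $\tau$-symmetric cycle avoiding $0$ must have even period, with the involution realized by $p_c^{n/2}$, is a detail the paper leaves implicit and is a welcome addition.
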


\begin{proof}
Assume that $0$ is a CS-point. Then, by Theorem \ref{t:43}, there is a recurrent critical point
$c$ not separated from $0$ by any rational cut, and the same holds for $-c$. This corresponds to case (a) of the
lemma.

 If $0$ is parabolic or attracting then it attracts at least one critical point of $p_c$ and
hence, by symmetry, both of them. In this case $p_c$ has no other non-repelling cycles. So, from now on we assume that $0$ is
repelling.

If $p_c$ has a symmetric CS-cycle $T$ of period $2n$ then, by Theorem \ref{t:43}, it has a point $w\in T$ not separated from a
recurrent critical point, say, $c$, of $p_c$ by a rational cut; hence, $-w\in T$ is not separated from a
recurrent critical point $-c$ of $p_c$ by a rational cut. This, again by Theorem \ref{t:43}, implies that
there are no other non-repelling cycles of $p_c$. This corresponds to case (b) of the lemma.

Finally, let $T$ and $-T$ be distinct mutually symmetric CS-cycles of $p_c$. By Theorem \ref{t:43},
we may assume that $T$ has a point $w$ not separated from a
recurrent critical point, say, $c$ of $p_c$ by a rational cut; hence, $-w\in -T$ is not separated from a
recurrent critical point $-c$ of $p_c$ by a rational cut. 
This implies that
there are no other non-repelling cycles of $p_c$. This corresponds to case (d) of the lemma.
\end{proof}

\subsection{Hyperbolic components and multipliers}

Let $p_{c_0}$ have a periodic point $w$ of period $n$ such that $(p_{c_0}^n)'(w)\ne 1$.
By the implicit function theorem applied to the equation $p_c^n(z)=z$,
there is a holomorphic function $\alpha(c)$ defined on an open Jordan disk around $c_0$ such that
 $\alpha(c_0)=w$, and $\alpha(c)$ is a periodic point of $p_c$ of period $n$.
Also, the multiplier $(p_c^n)'(\al(c))$ is a holomorphic function of $c$.
Hence the set of hyperbolic parameters is an open subset of $\M^{sy}_3$; a connected  component $\Hc$ of this set
is called a \emph{hyperbolic component} of $\M^{sy}_3$. For any $c\in \Hc$ the $\omega$-limit set of the marked critical point is
the unique \emph{marked} attracting cycle $Q_c$; the \emph{period} of $\Hc$ is the period of $Q_c$. Conjecturally,
every connected component of the interior of $\M^{sy}_3$ is hyperbolic.

\begin{theorem}[\cite{Sul85}]
\label{t.components}
Every 
 component of the Fatou set of a rational function is eventually periodic.
In particular, any bounded Fatou domain of a hyperbolic symmetric cubic polynomial
 eventually maps into a cycle of Fatou domains that contains an attracting cycle.
\end{theorem}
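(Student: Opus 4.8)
The first assertion is Sullivan's No Wandering Domains Theorem, and I would prove it by the standard quasiconformal deformation argument of \cite{Sul85}. Suppose, for contradiction, that $f$ is a rational map of degree $d\ge 2$ with a \emph{wandering} Fatou component $\Omega$, i.e. the components $f^n(\Omega)$, $n\ge 0$, are pairwise distinct. Passing to a forward image of $\Omega$ and using the classification of the possible geometries of wandering domains (together with the fact that $f$ has only finitely many grand orbits of critical points, hence only finitely many ``exceptional'' components), one reduces to the case in which $\Omega$ is simply connected and its forward orbit is disjoint from the postcritical set, so that $f$ maps a neighbourhood of each $f^n(\Omega)$ conformally. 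Then, via the measurable Riemann mapping theorem (Ahlfors--Bers), one builds an $N$-real-parameter family $\mu_\lambda$ of $f$-invariant Beltrami coefficients on the sphere supported on the grand orbit of $\Omega$; integrating $\mu_\lambda$ produces quasiconformal homeomorphisms $\varphi_\lambda$ conjugating $f$ to rational maps $f_\lambda=\varphi_\lambda\circ f\circ\varphi_\lambda^{-1}$ which, for $N$ large, are pairwise non-conjugate by M\"obius transformations. Since the moduli space of degree-$d$ rational maps has complex dimension $2d-2$, choosing $N>2(2d-2)$ gives a contradiction. The delicate points are bounding from below the dimension of the space of admissible $f$-invariant Beltrami coefficients (this is precisely where one uses that $\Omega$ is wandering, by carefully spreading the Beltrami forms around the orbit) and disposing of the multiply connected and (pre)critical cases.

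The second assertion follows from the first together with the classification of periodic Fatou components and the hyperbolicity hypothesis. Let $\Omega$ be a bounded Fatou domain of a hyperbolic polynomial $p_c$. Since $p_c$ is a polynomial, $p_c^n(\Omega)\subset p_c^n(\overline{\Omega})$ is bounded for every $n$, so every forward image of $\Omega$ is again a bounded Fatou domain. By the first part there are $0\le n<m$ with $p_c^n(\Omega)=p_c^m(\Omega)$; put $\Omega'=p_c^n(\Omega)$, a bounded periodic Fatou domain, say of period $k$. A bounded periodic Fatou domain is not the basin of $\infty$ (which is unbounded), so by the classification of periodic Fatou components it is an attracting (possibly super-attracting) domain, a parabolic domain, a Siegel disk, or a Herman ring. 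Herman rings do not occur for polynomials, and a hyperbolic $p_c$ has neither a parabolic cycle nor a CS-cycle: its finite critical points lie in finite attracting basins and $\infty$ is a super-attracting fixed point, hence \emph{all} critical points of $p_c$ are attracted to attracting cycles, the postcritical set is disjoint from $J(p_c)$, and rotation and parabolic domains are excluded. Therefore $\Omega'$ lies in the basin of an attracting cycle. Finally, a periodic component of an attracting basin must belong to the immediate basin: the forward orbit $\{p_c^{jk}(\Omega')\}_{j\ge0}$ is a finite cycle of Fatou domains, each of which maps eventually --- hence, being in a cycle, already --- into the immediate basin, so this cycle of Fatou domains contains a point of the attracting cycle. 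Thus $\Omega$ eventually maps into a cycle of Fatou domains containing an attracting cycle, as claimed; if desired, Lemma \ref{l:hyper} identifies this cycle explicitly according to the type (a), (b), or (d) of $p_c$.

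The main obstacle lies entirely in the first assertion --- the no-wandering-domains statement --- and specifically in the quasiconformal deformation count: producing enough independent $f$-invariant Beltrami coefficients supported on a single grand orbit, and reducing to the geometrically favourable situation in which the orbit of $\Omega$ avoids the postcritical set. Once Sullivan's theorem and the Cremer--Fatou classification of periodic Fatou components are in hand, the passage to the hyperbolic symmetric cubic case is routine, which is why we simply invoke \cite{Sul85}.
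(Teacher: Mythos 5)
Your argument is correct and matches what the paper does: Theorem~\ref{t.components} is stated there without proof as an import from \cite{Sul85}, and your sketch of the quasiconformal deformation argument together with the routine deduction of the second assertion (classification of periodic Fatou components, exclusion of parabolic/rotation domains and Herman rings for a hyperbolic polynomial, and the observation that a periodic component of an attracting basin is an immediate basin component) is exactly the reasoning the paper implicitly relies on. No gaps.
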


Recall that, by Lemma \ref{l:hyper}, the set $\D_{\sqrt{1/3}}$ is a hyperbolic  component of $\M^{sy}_3$.

\begin{definition}\label{d:main}
The set $\D_{\sqrt{1/3}}$ is called the \emph{main hyperbolic component} of $\M^{sy}_3$ and is denoted $\H_{main}$.
\end{definition}

Corollary \ref{c:multiplier} follows from Lemmas \ref{l:hyper}, \ref{l:parab} and \ref{l:cs}. 

\begin{corollary}\label{c:multiplier}
A polynomial $p_c$ has 
one symmetric non-repelling cycle, or two mutually symmetric non-repelling cycles with equal multipliers, or no non-repelling cycles at all.
\end{corollary}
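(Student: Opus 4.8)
The plan is to combine the case analyses of Lemmas \ref{l:hyper}, \ref{l:parab}, and \ref{l:cs} with the observation that, since $p_c$ is odd, the reflection $\iota\colon z\mapsto -z$ conjugates $p_c$ to itself.

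First I would dispose of the trivial alternative: if $p_c$ has no non-repelling cycle, there is nothing to prove. So suppose $p_c$ has a non-repelling cycle $Q$. Every non-repelling cycle is attracting, parabolic, or a CS-cycle, so at least one of Lemmas \ref{l:hyper}, \ref{l:parab}, \ref{l:cs} applies with its hypothesis satisfied; crucially, the conclusion of whichever lemma applies describes \emph{all} non-repelling cycles of $p_c$, so the three lemmas are mutually consistent and leave no ``mixed'' possibility (for instance an attracting cycle coexisting with a CS-cycle). Reading off those conclusions, either $p_c$ has a unique non-repelling cycle which is symmetric --- this is case (a), where the cycle is $\{0\}$ and thus fixed by $\iota$, or case (b), where the distinguished cycle of Fatou domains, hence the cycle of non-repelling periodic points it carries, is $\iota$-invariant by construction --- or $p_c$ has exactly two non-repelling cycles $T$ and $-T=\iota(T)$ (case (d)).

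It remains to check that in case (d) the two cycles have equal multipliers. From $p_c(-z)=-p_c(z)$ we get $p_c\circ\iota=\iota\circ p_c$, so $\iota$ is a conformal automorphism of $\C$ conjugating $p_c$ to itself; it therefore sends a periodic point of period $n$ to a periodic point of period $n$. Differentiating $p_c^n\circ\iota=\iota\circ p_c^n$ at a point $w\in T$ and using $\iota'(w)=-1\neq 0$ together with $p_c^n(w)=w$ yields $(p_c^n)'(-w)=(p_c^n)'(w)$, so $T$ and $-T$ have the same multiplier. This exhausts the alternatives and proves the corollary.

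I do not anticipate a genuine obstacle here; the only point needing a line of care is confirming that Lemmas \ref{l:hyper}, \ref{l:parab}, and \ref{l:cs} jointly account for the entire non-repelling spectrum, so that applying the one whose hypothesis holds genuinely classifies every non-repelling cycle of $p_c$, together with unwinding in cases (a) and (b) why ``unique cycle'' forces ``symmetric cycle''.
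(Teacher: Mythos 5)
Your proposal is correct and follows the paper's own route: the paper simply derives the corollary from Lemmas \ref{l:hyper}, \ref{l:parab} and \ref{l:cs}, exactly as you do. Your explicit chain-rule computation showing that $\iota(z)=-z$ conjugates $p_c$ to itself and hence forces equal multipliers on the two cycles in case (d) is a detail the paper leaves implicit, and it is carried out correctly.
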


%


Let $\H\ne \H_{main}$ be a hyperbolic component
 and $c\in \H$.
Denote by $F^\pm_c$ the critical Fatou domains of $p_c$
that correspond to the critical Fatou gaps $U^\pm_\H$, respectively, of $\lam_\H$
(we always assume that the marked critical point $c$ belongs to $F^+_c$).
Let $M_\H$ and $M'_\H$ be the majors of $\lam_\H$ that are edges of $U^+_\H$;
then $\si_3(M_\H)=\si_3(M'_\H)$ by Lemma \ref{l:bd}, and we always assume that $M_\H$ is periodic.
Let $\co^+_\H$ be the marked comajor (i.e., $\si_3(\co^+_\H)=\si_3(M_\H)$)
 and let $\co^-_\H$ be the other comajor of $\lam_\H$.
Similar objects can be defined
for any hyperbolic or parabolic parameter $c$ yielding such notation as $U^\pm_c,$ $M_c,$ $M'_c$.

Let a hyperbolic component $\H$ be given.
All $c\in\H$ satisfy the same option $(a)$, $(b)$ or $(d)$ from Lemma \ref{l:hyper}.
According to these three cases, $\H$ is said to be of \emph{type} A, B or D, respectively.
Also, given a polynomial $p_c$ with parabolic (or attracting) periodic point $z$,
let $m_r(c)$ be the minimal number such that $p_c^{m_r(c)}$ fixes dynamical external rays landing on $z$
(or the point $z$ itself if it is attracting); note that it does not depend on the choice of a particular ray and is called the \emph{ray period} of $p_c$. The number
$(p_c^{m_r(z)})'(z)$ is called the \emph{ray multiplier} (of $p_c$) and is denoted $r\hspace{-2pt}\rho(c)$. 
Notice that the period of a parabolic point $z$ may be strictly smaller
than $m_r(c)$ so that $m_r(c)$ is a multiple of the period. 

A similar concept can be defined for a hyperbolic component $\H$. Namely, let $r\hspace{-2pt}\rho_\H:\H\to \D$ be defined as
$r\hspace{-2pt}\rho_\H(c)=r\hspace{-2pt}\rho(c)$. As we show in Theorem \ref{t:preperiodiccomponents}, $r\hspace{-2pt}\rho_\H$ can be extended over $\ol{\H}$. For
a parabolic parameter $c\in \partial \H$, this extended function $r\hspace{-2pt}\rho_\H$ may not be equal to $r\hspace{-2pt}\rho(c)$. 
To emphasize that we use the subscript $_{\H}$
in the notation, and call $r\hspace{-2pt}\rho_\H$ the \emph{ray multiplier based on $\H$}.
This difference is not present for parameters $c\in \H$, but does show for parameters $c\in \partial \H$.

\begin{theorem}
\label{t:preperiodiccomponents}
For a type D hyperbolic component $\H$ of $\M^{sy}_3$
the map $r\hspace{-2pt}\rho_\H$ can be extended onto 
 $\ol\H$ so that
$r\hspace{-2pt}\rho_\H:\ol{\H} \to \ol{\D}$ is a homeomorphism conformal on $\H$.
\end{theorem}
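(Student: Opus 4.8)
The plan is to realize $r\hspace{-2pt}\rho_\H$ as a proper degree-one branched cover of the disk and then show it extends as a homeomorphism on the closure. First I would recall, following the classical construction for hyperbolic components of the Mandelbrot set and cubic connectedness loci (Milnor, Douady--Hubbard), that for a type D component $\H$ each $p_c$, $c\in\H$, has two mutually symmetric attracting cycles of Fatou domains; since the component is type D, the multipliers of the two cycles coincide by Corollary \ref{c:multiplier}, so the ray multiplier $r\hspace{-2pt}\rho(c)$ (associated to the marked cycle) is a well-defined holomorphic function $\H\to\D$. The first real step is to prove that $r\hspace{-2pt}\rho_\H:\H\to\D$ is proper. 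This uses the standard argument: if $c_n\to c_0\in\partial\H$ with $r\hspace{-2pt}\rho_\H(c_n)\to\lambda_0$, $|\lambda_0|<1$, then by the implicit function theorem (applied to $p_c^n(z)=z$ as in the discussion preceding Theorem \ref{t:preperiodiccomponents}) the attracting cycle persists holomorphically with nonzero, non-unit multiplier in a neighborhood of $c_0$, so $c_0\in\H$, a contradiction. Hence $r\hspace{-2pt}\rho_\H$ is a proper holomorphic map of open disks and therefore a branched covering of some degree $k\ge 1$.

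Next I would pin down the degree $k=1$. The cleanest route is to show $r\hspace{-2pt}\rho_\H$ has a unique simple preimage over $0$: the center of $\H$ is the parameter $c$ for which the marked attracting cycle is superattracting, i.e. contains the marked critical point $c$. In the type D situation the two critical points lie in two distinct symmetric cycles, and specifying that the marked cycle be superattracting amounts to a polynomial equation $p_c^n(c)=c$ of which, after accounting for the $\tau$-symmetry and using that symmetric laminations of type D have exactly two critical periodic gaps $U,V=\tau(U)$ (Lemma \ref{l:bd}(D)), exactly one solution lies in $\H$. Simplicity follows because $r\hspace{-2pt}\rho_\H$ is a local biholomorphism near the center (the multiplier moves with nonzero derivative there, the classical ``multiplier is a local coordinate at the center'' fact, which in the cubic symmetric setting reduces to the quadratic-like analysis via the first half-return map $\eta$ of Definition \ref{d:eta} and Lemma \ref{l:eta}, so that $p_c$ restricted near the marked cycle is quadratic-like of degree $2$, reducing to Douady--Hubbard straightening). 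Thus $\deg(r\hspace{-2pt}\rho_\H)=1$ and $r\hspace{-2pt}\rho_\H:\H\to\D$ is a conformal isomorphism.

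It remains to extend over $\ol\H$. I would use the quadratic-like / straightening picture again: via a holomorphic motion or the Douady--Hubbard tubing, $\H$ is equipped with a quadratic-like family whose straightening gives a canonical homeomorphism $\H\to(\text{unit disk})$ compatible with multipliers, and the extension of $r\hspace{-2pt}\rho_\H$ to $\ol\H$ comes from the boundary behavior of this straightening together with the parametrization of $\partial\D$ by the internal angle. Concretely, for $c\in\partial\H$ one defines $r\hspace{-2pt}\rho_\H(c)$ to be the limit along $\H$; properness gives that this limit map $\ol\H\to\ol\D$ is continuous and surjective, and injectivity on $\partial\H$ follows because distinct boundary parameters are distinguished either by a parabolic cycle with a prescribed ray multiplier on $\partial\D$ (landing points of parameter rays $\Rc_\theta$ with rational $\theta$, cf. the construction of $\Psi$ and Theorem \ref{t:connect}) or by the combinatorial data of the associated comajor, which by Corollary \ref{c:major2} distinguishes distinct Fatou laminations. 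The main obstacle I anticipate is precisely this last point: proving injectivity and continuity of the extension on $\partial\H$ rigorously, i.e. showing that the ray multiplier based on $\H$ (which, as the paper warns, may differ from the intrinsic ray multiplier $r\hspace{-2pt}\rho(c)$ at parabolic $c\in\partial\H$) really does separate all boundary points — this requires carefully combining the parabolic bifurcation analysis (root of $\H$ corresponds to $r\hspace{-2pt}\rho_\H=1$) with the uniqueness of comajors of Fatou laminations from Corollary \ref{c:major2}, and handling the type D half-return reduction so that everything is genuinely quadratic-like and the classical boundary-extension results for the multiplier map of a quadratic-like family apply verbatim.
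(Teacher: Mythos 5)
Your route is genuinely different from the paper's. The paper does not run the classical ``proper map of known degree'' argument at all: it invokes Theorem C of Inou--Kiwi \cite{IK12} directly. The component $\H$ sits inside a hyperbolic component $\hat\H$ of the full space of monic centered cubics; the IK-straightening map $\chi_\lam$ is biholomorphic from $\hat\H$ onto $\mathrm{Ca}\times\mathrm{Ca}$ (two copies of the interior of the main cardioid), the symmetric slice $\H$ is carried onto the diagonal, and the diagonal is uniformized by $\D$ via the multiplier. This yields the conformal isomorphism $r\hspace{-2pt}\rho_\H:\H\to\D$ in one stroke, with the boundary extension then treated as standard.

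The gap in your version is the degree count. Properness (your first step) is fine and gives surjectivity onto $\D$, hence at least one preimage of $0$, i.e., at least one center. But your justification that ``exactly one solution of $p_c^n(c)=c$ lies in $\H$'' --- by appeal to the $\tau$-symmetry and Lemma \ref{l:bd}(D) --- does not work: that equation has $3^n$ solutions globally, and nothing about the symmetry or the gap structure of $\lam_\H$ controls how many of them fall into one particular component. Uniqueness of the center in a component is essentially equivalent to injectivity of the multiplier map, i.e., to the theorem itself; in the paper the corresponding counting statement (Theorem \ref{t:center1}) rests on Poirier's rigidity of critical portraits, and the multiplier statement on Inou--Kiwi. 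So you must either invoke Thurston/Poirier rigidity explicitly (two centers in $\H$ would share the lamination $\lam_\H$, hence the critical portrait, hence coincide as marked polynomials), or run the Douady--Hubbard quasiconformal surgery / quadratic-like straightening argument globally --- which is exactly what Inou--Kiwi packages. Note also that the same machinery underlies your ``multiplier is a local coordinate at the center'' claim and the boundary homeomorphism, so your proof is not an independent elementary alternative; it needs the straightening input in any case. With the unique-center step supplied by Poirier-type rigidity, your argument does close up and becomes a legitimate, more hands-on alternative to the paper's citation of Theorem C.
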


\begin{proof}
The result essentially follows from Theorem C of \cite{IK12}, however, we need to explain
 how the terminology of Inou--Kiwi relates to ours.
Let $\lam$ be a cubic invariant q-lamination with at least one cycle of Fatou gaps.
With $\lam$, one associates a \emph{reduced mapping schema} $T(\lam)$.
Instead of giving a general definition of mapping schemata, we give an explicit description
 of $T(\lam)$ in the case when $\lam$ is symmetric of type D, that is, when $\lam$
 has two distinct cycles of Fatou gaps.
In this case, $T(\lam)$ can be represented as the graph with two vertices and two (directed) edges
 that are loops based at both vertices.
Every edge of $T(\lam)$ is in general equipped with a positive integer called the \emph{degree};
 in our specific case, the degrees of both loops are equal to 2.
Intuitively, the arrows of $T(\lam)$ represent the first return maps to the critical Fatou gaps of $\lam$.
The space $\Cc(T(\lam))$ in our specific case consists of all pairs $(q_0,q_1)$ of
 monic centered quadratic polynomials $q_0$, $q_1$
 with connected Julia sets --- informally, the two loops of $T(\lam)$ are replaced with $q_0$ and $q_1$.

By definition, the space $\Rf(\lam)$ consists of all monic cubic polynomials $f$ such that
\begin{itemize}
  \item the filled Julia set $K(f)$ is connected;
  \item for any (pre)periodic leaf of $\lam$ with endpoints $\alpha$, $\beta$, the corresponding external rays
   $R_\alpha(f)$ and $R_\beta(f)$ land on the same (pre)periodic point of $K(f)$;
  \item let $U_0$, $U_1$ be the critical Fatou gaps of $f$;
  the corresponding subcontinua $K_0$ and $K_1$ of $K(f)$ are polynomial-like filled Julia sets
  of certain polynomial-like restrictions of $f^n$, where $n$ is the period of $U_0$ and $U_1$.
\end{itemize}
Here, one needs to explain in which sense $K_0$ corresponds to $U_0$, and similarly with $K_1$ and $U_1$.
For each $\ell\in\lam$, let $\alpha$ and $\beta$ be the endpoints of $\ell$, and write $\Gamma_\ell$
 for the cut formed by the external rays $R_\alpha(f)$, $R_\beta(f)$, and their common landing point.
Then $K_0$ corresponding to $U_0$ means that $K_0$ lies on the same side of $\Gamma_\ell$ as $U_0$ relative to $\ell$,
 for every $\ell\in\lam$.
By the Douady--Hubbard straightening theorem, $f^n$ is hybrid equivalent to a
 unique monic quadratic polynomial $q_i$ near $K_i$, where $i=0$, $1$.
The \emph{Inou--Kiwi straightening map} (abbreviated as \emph{IK-straightening map}) $\chi_{\lam}:\Rf(\lam)\to\Cc(T(\lam))$ takes $f$ to $(q_0,q_1)$.
(The fact that $q_i$ are quadratic yields some simplification: in higher degree cases one needs additional normalization called
 \emph{internal angles assignment} in order to make $q_i$ unique).

Theorem C of \cite{IK12} can now be formulated as follows.
\emph{Denote by $\mathrm{Hyp}(\Cc(T(\lam)))$ the set of hyperbolic maps contained in $\Cc(T(\lam))$
 (in our specific case, $(q_0,q_1)\in\Cc(T(\lam))$ being hyperbolic means both $q_0$ and $q_1$ are hyperbolic).
Then $\chi_\lam(\Rf(\lam))\supset\mathrm{Hyp}(\Cc(T(\lam)))$, the inverse image
 of $\mathrm{Hyp}(\Cc(T(\lam)))$ under $\chi_\lam$ is an open set, and the restriction of $\chi_\lam$
 onto this open set is biholomorphic.}
Now let $\H$ be a given type D hyperbolic component of $\M^{sy}_3$; it lies in some
 hyperbolic component $\hat\H$ of the connectedness locus of all monic centered cubic polynomials.
All polynomials from $\hat\H$ have the same lamination, say, $\lam$.
By Theorem C of \cite{IK12}, the restriction of $\chi_\lam$ to $\hat\H$ is a biholomorphic isomorphism between
 $\hat\H$ and the product of the interior $\mathrm{Ca}$ of the main cardioid with itself.
The image of $\H$ under $\chi_\lam$ is then the diagonal in $\mathrm{Ca}\times\mathrm{Ca}$,
 and the restriction of $\chi_\lam$ is a biholomorphic map between $\H$ and this diagonal
 (the latter is isomorphic to $\D$ under the multiplier map).
It follows that $r\hspace{-2pt}\rho_\H:\H\to\D$ is a conformal isomorphism.
Clearly, it extends to a homeomorphism $r\hspace{-2pt}\rho_\H:\ol\H\to\ol\D$.
\end{proof}


For any hyperbolic component $\H$, write $r\hspace{-2pt}\rho_\H$ for the restriction of the function $r\hspace{-2pt}\rho$ to $\H$.
Below, we discuss the boundary extension of $r\hspace{-2pt}\rho_\H$, which may not coincide with $r\hspace{-2pt}\rho$
 at points of $\partial\H$ where the latter function is defined.
Let a polynomial $p_c$ have a parabolic or attracting cycle $X_c$ of type B.
Then the period of $X_c$ is an even number $2n$, and $p_c^n(x)=-x$ for every $x \in X_c$.
The number $-(p_c^n)'(x)$ does not depend on the point $x\in X_c$, is denoted $r\hspace{-2pt}\tro(c)$, and is called the \emph{ray half-multiplier} (of $p_c$).
Note that $r\hspace{-2pt}\tro(c)$ can be interpreted as the multiplier of the fixed point $x$ of the map $-p_c^n$.
As before, the ray half-multiplier depends only on the parameter and is defined as long as $p_c$ is of type B.
If $c\in \H$, where $\H$ is of type B, then we write $r\hspace{-2pt}\tro_\H$ for the restriction of
 the function $r\hspace{-2pt}\tro_\H(c)$ to $\H$.
The \emph{ray (half-)multiplier} of $c$ is defined as either $r\hspace{-2pt}\rho(c)$ or $r\hspace{-2pt}\tro(c)$ depending
 on whether $c$ is type D or type B.

\begin{theorem}
\label{t:preperiodiccomponents2}
Let $\H$ be a hyperbolic component of $\M^{sy}_3$ of type B. 
The map $r\hspace{-2pt}\tro_\H$ can be extended over $\ol\H$ so that $r\hspace{-2pt}\tro_\H:\ol{\H}\to \ol{\D}$ is a homeomorphism
which is conformal on $\H$ while
the ray multiplier $r\hspace{-2pt}\rho_\H=r\hspace{-2pt}\tro_\H^2:\ol{\H}\to \ol{\D}$ is a double-covering.
\end{theorem}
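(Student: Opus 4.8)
The plan is to reduce the type B case to the type D case already settled in Theorem \ref{t:preperiodiccomponents}, exploiting the conjugacy from Lemma \ref{l:90}. The key observation is that the first half-return structure for a type B component is captured by the map $-p_c^n$, which plays exactly the role that the first-return map $p_c^n$ plays in type D; and $-p_c^n$ is, essentially, the square root of the genuine first return $p_c^{2n}$. Concretely, I would first set up the Inou--Kiwi machinery as in the proof of Theorem \ref{t:preperiodiccomponents}, but now with $\lam=\lam_\H$ a symmetric cubic lamination of type B: here there is a single $\tau$-symmetric cycle $\Ac$ of Fatou gaps of period $2m$, with critical gaps $U$ and $V=\si_3^m(U)=\tau(U)$ in the same cycle. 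The reduced mapping schema $T(\lam)$ in this case is a single vertex with one loop of degree $2$ (coming from the first half-return map $\eta=\tau\circ\si_3^m|_U$ of Definition \ref{d:eta}, which by Lemma \ref{l:eta} is $2$-to-$1$ and semiconjugate to $\si_2$). Accordingly $\Cc(T(\lam))$ is the space of monic centered quadratic polynomials with connected Julia set, i.e. (the closure of) a single copy of the Mandelbrot set; the hyperbolic component $\hat\H$ of the cubic connectedness locus containing $\H$ straightens via $\chi_\lam$ biholomorphically onto the main cardioid $\mathrm{Ca}$, and the coordinate there is precisely the multiplier of the fixed point of $-p_c^n$, namely $r\hspace{-2pt}\tro(c)$.

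Next I would argue that, under the embedding $\H\hookrightarrow\hat\H$, the type B constraint (that the two critical points lie in $\tau$-symmetric Fatou domains interchanged by $p_c^m$) is automatically satisfied for every polynomial in $\hat\H$: this is the content of Lemma \ref{l:hyper}(b) together with the description of type B laminations in Lemma \ref{l:bd}(B). Hence $\H=\hat\H\cap\scp$ is cut out by no further analytic condition, and the restriction of $\chi_\lam$ to $\H$ is still a biholomorphism onto an open subset of $\mathrm{Ca}$ — in fact onto all of $\mathrm{Ca}$, since the half-return map can be any quadratic hyperbolic polynomial in the main cardioid, realized by pulling back along the cycle $\Ac$ and its $\tau$-image exactly as in the tuning construction described after Lemma \ref{l:bd}. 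Composing with the multiplier isomorphism $\mathrm{Ca}\to\D$, I get that $r\hspace{-2pt}\tro_\H:\H\to\D$ is a conformal isomorphism, and by the standard argument (Theorem C of \cite{IK12} gives that $\chi_\lam$ extends to a homeomorphism on closures of hyperbolic components) it extends to a homeomorphism $r\hspace{-2pt}\tro_\H:\ol\H\to\ol\D$.

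Finally, for the ray multiplier itself: by definition $r\hspace{-2pt}\rho_\H(c)=(p_c^{m_r(c)})'(x)$ where $m_r(c)$ is the ray period, which for a type B parameter is $2n$ (the full period of the cycle), while $r\hspace{-2pt}\tro_\H(c)=-(p_c^n)'(x)$ uses only the half-period. Since $(p_c^{2n})'(x)=\bigl((p_c^n)'(x)\bigr)^2=\bigl(-(p_c^n)'(x)\bigr)^2$, we get the identity $r\hspace{-2pt}\rho_\H=r\hspace{-2pt}\tro_\H^2$ on $\H$, and by continuity on $\ol\H$. Composing the homeomorphism $r\hspace{-2pt}\tro_\H:\ol\H\to\ol\D$ with the squaring map $z\mapsto z^2:\ol\D\to\ol\D$, which is a (branched) double covering ramified only at $0$, yields that $r\hspace{-2pt}\rho_\H:\ol\H\to\ol\D$ is a double covering, as claimed.

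\medskip

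I expect the main obstacle to be the bookkeeping identifying $T(\lam)$ and the straightening coordinate correctly in the type B case — in particular, verifying that the single degree-$2$ loop really corresponds to $-p_c^n$ (and not to $p_c^{2n}$, which would give the wrong, squared, parametrization), and that the diagonal/sub-cardioid issue that arose in type D does \emph{not} arise here because there is only one quadratic factor to begin with. This requires a careful comparison of the combinatorial first half-return map $\eta$ from Definition \ref{d:eta} with the dynamical polynomial-like restriction furnished by Douady--Hubbard straightening, i.e. checking that the hybrid class of the polynomial-like map $-p_c^n$ near the critical filled Julia piece is what the Inou--Kiwi map records. Once that correspondence is pinned down, everything else is either a citation of Theorem \ref{t:preperiodiccomponents} (or its proof) or the elementary squaring computation above.
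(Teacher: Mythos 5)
Your final step (the chain-rule identity $r\hspace{-2pt}\rho_\H=r\hspace{-2pt}\tro_\H^2$ and the observation that composing a homeomorphism $\ol\H\to\ol\D$ with $z\mapsto z^2$ gives a double covering) is fine and matches the paper. The problem is in the central Inou--Kiwi step, precisely the point you flag as ``the main obstacle'': your identification of the reduced mapping schema is wrong, and the error is not mere bookkeeping. The schema $T(\lam)$ is attached to the cubic polynomial viewed in the full space of monic centered cubics, and it knows nothing about the symmetry $\tau$ or the map $-p_c^n$. For a type B lamination the two critical Fatou gaps $U$ and $V=\tau(U)=\si_3^m(U)$ are \emph{distinct} gaps in one cycle of period $2m$, so $T(\lam)$ has \emph{two} vertices joined by \emph{two} directed edges (one in each direction, each of degree $2$, representing $\si_3^m:U\to V$ and $\si_3^m:V\to U$) --- not a single vertex with one loop. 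Consequently $\Cc(T(\lam))$ is the two-dimensional space of pairs $(q_0,q_1)$ of monic centered quadratics with $J(q_1\circ q_0)$ connected, and Theorem C of \cite{IK12} identifies $\hat\H$ (which is an open subset of the two-dimensional full cubic parameter space) with the \emph{principal hyperbolic component} of that space. Your claim that $\hat\H$ is biholomorphic to the main cardioid is off by one complex dimension, and the companion claim that $\H=\hat\H\cap\scp$ is ``cut out by no further analytic condition'' cannot be right either: $\scp$ is a codimension-one slice of the full cubic family.

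In other words, the diagonal issue you assert does \emph{not} arise in type B is exactly what does arise: the symmetric polynomials correspond to pairs with $q_0=q_1$, i.e.\ to the diagonal slice of the principal hyperbolic component of $\Cc(T(\lam))$, and it is the restriction of $\chi_\lam$ to that diagonal that is biholomorphic to $\D$ via the multiplier of the attracting fixed point of $q_0=q_1$ (which is what realizes $r\hspace{-2pt}\tro_\H$). So the type B argument is structurally parallel to the type D one, not a simplification of it. If you instead insist on applying straightening directly to the half-return map $-p_c^n$ (a one-vertex, one-loop schema), you are implicitly working with the dynamics quotiented by $z\mapsto -z$, which is not the setting of Theorem C of \cite{IK12}; you would need a separate justification that such a symmetric or orbifold version of the straightening theorem holds, and none is provided.
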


\begin{proof}
Similarly to Theorem \ref{t:preperiodiccomponents},
Theorem \ref{t:preperiodiccomponents2} also follows from Theorem C of \cite{IK12}.
Let $\hat\H$ be the hyperbolic component in the full space of monic centered cubic polynomials containing $\H$.
All polynomials from $\hat\H$ have the same lamination $\lam$ so that the
 corresponding reduced mapping schema $T(\lam)$ has two vertices and two directed edges connecting the
 two vertices in opposite directions; each edge has degree 2.
The space $\Cc(T(\lam))$ consists of pairs $(q_0,q_1)$ of monic centered quadratic polynomials
 such that the Julia set of $q_1\circ q_0$ is connected.
The straightening map of Inou--Kiwi provides a biholomorphic isomorphism between $\hat\H$
 and the \emph{principal hyperbolic component} in $\Cc(T(\lam))$ consisting of all $(q_0,q_1)$
 such that $q_1\circ q_0$ has an attracting fixed point, and $J(q_1\circ q_0)$ is a Jordan curve.
Clearly, symmetric cubic polynomials are mapped to pairs $(q_0,q_1)$, for which $q_0=q_1$.
The corresponding slice of the principal hyperbolic component is biholomorphic to $\D$,
 and the corresponding conformal isomorphism is given by the multiplier of the attracting fixed point of $q_0=q_1$.
\end{proof}

Let us now define the center and the root of a hyperbolic component.

\begin{definition}[Center and root]\label{d:center}
Let $\H$ be a hyperbolic component of type B or D.
The \emph{center} of $\H$ is the point $c\in \H$ such that $p_c$ has
superattracting cycle(s). 
The \emph{root} of $\H$ is the point $r_\H\in\partial\H$ such that
$r\hspace{-2pt}\rho_\H(r_\H)=1$ (if $\H$ is of type D) or
$r\hspace{-2pt}\tro_\H(r_\H)=1$ (if $\H$ is of type B).
\end{definition}

Lemma \ref{l:para1} justifies the above definition of the roots.

\begin{lemma}\label{l:para1}
Suppose that 
 $z$ is a parabolic point of $p_c$ 
 of ray period $n$. Then there is a major $\ol{\ta\ta'}$ of $\lam_c$ such that both
$R_\ta(c)$ and $R_{\ta'}(c)$ land on $z$.
Moreover, the 
(half-)multiplier of $c$ equals $1$.
%
%
%
%
\end{lemma}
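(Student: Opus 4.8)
The plan is to analyze separately the two possible types (B and D) of the polynomial $p_c$ with parabolic cycle, using the classification from Lemma \ref{l:parab}, and in each case to compare the dynamical lamination $\lam_c$ of $p_c$ with the associated lamination of a Fatou polynomial to identify the relevant major. First I would recall the standard fact (Goldberg--Milnor, as in \cite{GM93}) that for a parabolic periodic point $z$ of $p_c$ the dynamical external rays landing on $z$ form finitely many cycles, each of the same ray period $n$; fixing such a ray $R_\ta(c)$, the rays $R_\ta(c)$ and $R_{\si_3^k(\ta)}(c)$ for $k=1,\dots,n-1$ land on the orbit of $z$. The chords $\ol{\ta\,\ta'}$ joining arguments of rays landing at a common point of this orbit are leaves of $\lam_c$; I must show one of these chords, or one in the orbit, is a major of $\lam_c$, i.e.\ is closest in length to $\frac13$ among leaves of $\lam_c$.

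The key step is a ``closest approach'' argument analogous to the proof of the last claim of Lemma \ref{l:bd}. Consider the finite union $L$ of leaves of $\lam_c$ joining arguments of rays landing at the points of the parabolic cycle $X=\{z,\dots\}$. Since $p_c$ is parabolic, $X$ lies on the boundary of a cycle of parabolic Fatou domains; by Lemma \ref{l:parab} this cycle is either symmetric of period $2n$ (type B) or consists of two mutually symmetric cycles (type D). In the type D case, the Fatou domains attached at $X$ pull back to a symmetric Fatou lamination structure: I would argue that $L$ contains the boundary leaves of a critical gap $U$ of the associated q-lamination (the parabolic domain containing the marked critical point $c$ degenerates, in the combinatorial model, to a critical Fatou gap with $c$ on its boundary). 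Then by Lemma \ref{l:bd} the only periodic orbit of edges on $\partial U$ is the orbit of a major $M_c$ of $\lam_c$, and since the leaves in $L$ are periodic of period $n$ (= ray period), $L$ together with its orbit must meet that major orbit; hence some leaf of $\lam_c$ landing-pair of $z$ is a major, as required. The type B case is identical after replacing the first return map by the first half-return map $\eta$ of Definition \ref{d:eta}: $\partial U$ carries exactly one periodic orbit of edges, which is the orbit of the periodic major, and $\eta$-periodicity of the chord $\ol{\ta\,\ta'}$ forces it into that orbit. To rule out that $\ol{\ta\,\ta'}$ fails to be a major, I invoke Lemma \ref{l:sh}: take the closest approach in length to $\frac13$ among the forward images of $\ol{\ta\,\ta'}$; if it were not already a major-orbit leaf, Lemma \ref{l:sh} would force a further image strictly closer to $\frac13$, and the orbit being finite (periodic) gives a contradiction unless $\ol{\ta\,\ta'}$ itself is in the major orbit.

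For the multiplier statement, I would use Theorems \ref{t:preperiodiccomponents} and \ref{t:preperiodiccomponents2}: the parabolic parameter $c$ lies on the boundary of a unique hyperbolic component $\H$ of the corresponding type, and the combinatorial data just extracted (namely that the rays landing on $z$ are exactly those associated with the major of $\lam_c$, so the ray period equals the ray period of $\H$) identifies $c$ with the boundary point where the extended multiplier $r\hspace{-2pt}\rho_\H$ (type D) or $r\hspace{-2pt}\tro_\H$ (type B) takes a value on $\partial\D$; parabolicity then forces this boundary value to be a root of unity, and the matching of ray periods forces it to be exactly $1$. Concretely, $r\hspace{-2pt}\rho(c)=(p_c^n)'(z)$ with $n$ the ray period is the ray multiplier, and by Goldberg--Milnor the number of rays landing at $z$ (which we have shown is $2$, one major pair per orbit) together with the combinatorial rotation number being trivial forces $(p_c^n)'(z)=1$; in the type B case the same reasoning applied to $-p_c^n$ gives $r\hspace{-2pt}\tro(c)=1$.

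\textbf{Main obstacle.} The delicate point is the precise identification, in each of the two types, of the leaves of $\lam_c$ landing at the parabolic cycle with edges of a critical (half-)return gap $U$ of an associated symmetric lamination, and checking that the ray period of $z$ coincides with the period of that gap so that ``periodic edge of $U$'' can be played off against ``major'' via Lemma \ref{l:bd}. This requires care because $\lam_c$ is the dynamical lamination of the parabolic polynomial itself, not of a hyperbolic one, so one must first argue that the parabolic Fatou domains behave combinatorially like the Fatou gaps of Lemma \ref{l:bd} — i.e.\ that degenerating the attracting cycle of a nearby $\H$-polynomial to the parabolic one does not change the relevant landing pattern — which is where the bulk of the technical work lies.
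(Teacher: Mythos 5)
Your overall strategy is the paper's: associate to the parabolic domain $U$ attached to $z$ the corresponding Fatou gap $F$ of $\lam_c$, and use the fact (Lemmas \ref{l:bd} and \ref{l:eta}) that the only periodic edge orbit of $F$ is the orbit of the periodic major. But there are two genuine gaps. First, you argue \emph{from} $z$ \emph{to} the rays: you form the leaves ``joining arguments of rays landing at a common point'' of the parabolic cycle and then push them into the major orbit. This presupposes that at least two rays land on $z$, which is exactly what has to be proved (a parabolic point with one petal can a priori be the landing point of a single ray, as for $z^2+\tfrac14$). The paper runs the argument in the opposite direction: by Lemma \ref{l:eta} the $\si_3^n$-fixed points of $\partial F$ (type D) are precisely the two endpoints of the periodic major $M=\ol{\ta\ta'}$, the corresponding rays must land on $p_c^n$-fixed points of $\partial U$, and $z$ is the \emph{only} such point because $p_c^n|_{\partial U}$ is modeled on $\si_2$; this is what produces the two rays. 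Second, the type B case is not ``identical after replacing the return map by $\eta$'': by Lemma \ref{l:bd}(B) the boundary $\partial F$ carries, besides $M$, two further $\si_3^{2m}$-fixed vertices $x,y$ that are \emph{not} endpoints of any edge of $F$, so $\partial U$ has three $p_c^n$-fixed points, and a priori $z$ could be the landing point of the single ray $R_x(c)$ or $R_y(c)$ — in which case no leaf of $\lam_c$ lands at $z$ at all. Your ``only periodic edges are majors'' argument cannot see this possibility. The paper excludes it by symmetry: if $x'$ were parabolic then $p_c^m(y')=-x'$ would force $y'$ to be parabolic too, contradicting the count of non-repelling cycles. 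Some such exclusion is indispensable and is missing from your proposal.

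On the multiplier claim, your detour through Theorems \ref{t:preperiodiccomponents} and \ref{t:preperiodiccomponents2} is circular in the paper's logical order: Lemma \ref{l:para1} is what \emph{justifies} the definition of the root (Definition \ref{d:center}), and the fact that a parabolic parameter sits on $\partial\H$ with matching combinatorics is only established later (Theorem \ref{t:parattr}). Your fallback argument, however, is exactly the paper's and is sound: once one knows the two rays landing at $z$ are the major, Lemma \ref{l:eta} shows the (half-)return map fixes each of them, so the combinatorial rotation number at $z$ is trivial and $(p_c^n)'(z)=1$ (resp.\ $(-p_c^m)'(z)=1$ in type B). Keep that version and drop the appeal to the multiplier maps on hyperbolic components.
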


\begin{proof}
Let $U$ be a period $n$ parabolic domain attached to $z$,
 and $F$ be the Fatou gap of $\lam_c$ corresponding to $U$.
Then $F$ has a unique periodic major $M=\ol{\ta\ta'}$. If $U$ is of type D then
the only $p_c^n$-fixed point in $\partial U$ is $z$ and the only $\si_3^n$-fixed points in $\partial F$ form the major $M$. Hence
$R_\ta(c)$ and $R_{\ta'}(c)$ land on $z$. If $U$ is of type B then $n=2m$ and there are three $p_c^n$-fixed points in $\partial U$ associated with
$M$ and two vertices $x, y$ of $F$. Let $x', y'\in \partial U$ be points associated with vertices $x, y$ of $F$. We claim that $x',y'$ are not parabolic.
Suppose that $x'$ is parabolic.
Then $p_c^m(y')=-x'$ by part (B) of Lemma \ref{l:bd}, which implies that $y'$
is also parabolic, a contradiction. Hence only $M$ can be associated with $z$ as desired.
Let us now prove 
 that the (half-)multiplier of $c$ equals 1.

(B) Let $p_c$ be of type B, $z\in \partial F^+_c$ be parabolic, and $F^+_c$ be of period $2m$; then $-p_c^m(z)=z$, and, by Lemma \ref{l:eta},
 the map $-p_c^m$ fixes the two external 
 rays of $p_c$ landing on $z$. Hence $(-p_c^m)'(z)=1$, which implies that $r\hspace{-2pt}\tro(c)=1$.

(D) Similar to (B) (details are left to the reader).
\end{proof}

The main component $\H_{main}$ of $\M^{sy}_3$ (for which 0 is attracting) is a round disk of radius $\sqrt{3}/3$. For $|c|<\sqrt{3}/3$
the Julia set $J(p_c)$ is a Jordan curve and $p_c|_{J_c}$ is conjugate to $\si_3$. In other words, the map is neither of type B nor of type D,
and neither Theorem~\ref{t:preperiodiccomponents} nor Theorem~\ref{t:preperiodiccomponents2}
applies. Each polynomial $p_c$ with $|c|=\sqrt{3}/3$ has a neutral fixed point at $0$ of multiplier $-3c^2$.
If $c_{1, 2}=\pm \frac{i}{\sqrt{3}}$ then
the corresponding polynomials $p_{c_{1, 2}}=z^3+z$ have multiplier 1 at fixed point $0$.
It is easy to see that the lamination associated with $z^3+z$ has the
 leaf $\ol{0\,\frac12}$, which is the horizontal diameter of the unit circle,
 and two invariant Fatou gaps located, respectively, above and below $\ol{0\,\frac12}$;
 the presence of these invariant gaps completely determines the lamination.
In particular, the laminations at these two parameters are different from the empty lamination which corresponds to any polynomial in $\H_{main}$. 
We will not consider these points (or any other points) as
roots of $\H_{main}$ so that $\H_{main}$ has the center (at the origin) but does not have a root.

\section{Parabolic polynomials}

The following stability property will be used repeatedly.
We state it only for symmetric cubic polynomials.

\begin{theorem}[Lemma B.1 of \cite{GM93}]
\label{t:gm}
Let $w$ be a repelling periodic point of $p_{c_0}$ such that an external
dynamical ray $R_{\theta_0}(c_0)$ with rational argument $\theta_0\in\R/\Z$
lands on $w$. Let $\ta$ be an angle such that an eventual $\si_3$-image of $\ta$ belongs to the $\si_3$-orbit
of $\ta_0$ and the rays from the (finite) $p_{c_0}$-orbit of $R_\ta(c_0)$ are all smooth
and do not land on critical points. Then, for $c$ sufficiently close to $c_0$,
the dynamical external rays from the 
$p_c$-orbit of $R_{\theta}(c)$ are smooth,
move continuously with $c$, and land on preperiodic or repelling periodic points of $p_c$ close to the landing
points of the dynamical external rays of $p_{c_0}$ with the same argument.
\end{theorem}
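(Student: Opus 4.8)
The plan is to reduce the statement to the case of a periodic angle, then propagate stability backwards along the $\si_3$-orbit, following the scheme of \cite[Lemma B.1]{GM93}; the main point is to check that nothing in that argument is specific to the quadratic family.

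First I would treat the periodic case. Since an eventual $\si_3$-image of $\ta$ lies in the $\si_3$-orbit of $\ta_0$ and $\ta_0$ is rational, $\ta$ is (pre)periodic; write $\ta^*=\si_3^{m}(\ta)$ for a periodic angle of period $q$ in the $\si_3$-orbit of $\ta_0$, landing for $c=c_0$ on a repelling periodic point $w^*$ of the cycle of $w$. Because $w^*$ is repelling, $(p_{c_0}^{q})'(w^*)\ne 1$, so the implicit function theorem applied to $p_c^{q}(z)=z$ yields a holomorphic germ $c\mapsto w^*(c)$ with $w^*(c_0)=w^*$ that stays repelling for $c$ near $c_0$. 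Fix a linearizing neighbourhood $N$ of the cycle of $w^*$ on which $p_c^{q}$ is, in a coordinate depending holomorphically on $c$, multiplication by the multiplier $\lambda(c)$ with $|\lambda(c)|>1$. On the other side, by the analyticity of the B\"ottcher map $B_c$ in $(c,z)$ proved in Section~5, the initial segment of $R_{\ta^*}(c)$ outside a fixed large equipotential moves continuously with $c$. Pulling this initial segment back by $p_c^{-1}$ finitely many times — possible and continuous so long as the rays involved avoid the critical points $\pm c$, which holds near $c_0$ by hypothesis and continuity — one reaches the fundamental annulus adjacent to $N$; inside $N$ the ray is, in the linearizing coordinate, a curve invariant under multiplication by $\lambda(c)$ converging to $0$, and this description persists since $\lambda(c)$ varies continuously and stays repelling. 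Hence $R_{\ta^*}(c)$ is smooth, moves continuously, and lands on $w^*(c)$ for $c$ near $c_0$; the finitely many other periodic rays in the $p_{c_0}$-orbit of $R_{\ta^*}(c_0)$ are handled identically.

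Second, I would pull back from $\ta^*$ to $\ta$. Assume inductively that for some $1\le k\le m$ the ray $R_{\si_3^{k}(\ta)}(c)$ is smooth, moves continuously, and lands on a repelling-or-preperiodic point $z_k(c)$ close to $z_k(c_0)$. By hypothesis $R_{\si_3^{k-1}(\ta)}(c_0)$ is smooth and contains no critical point; since the critical points $\pm c$ and their critical values move continuously and the rays in question stay uniformly away from them for $c$ near $c_0$, the local inverse branch of $p_c$ carrying $R_{\si_3^{k}(\ta)}(c)$ to $R_{\si_3^{k-1}(\ta)}(c)$ is well defined and continuous in $c$, and it sends $z_k(c)$ to a preperiodic-or-repelling point $z_{k-1}(c)$ close to $z_{k-1}(c_0)$. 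At $k=0$ this gives the assertion for $R_\ta(c)$.

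The genuine obstacle is the global control in the periodic case: that the curve obtained by iterating the pull-back infinitely often actually converges to a point, uniformly in $c$, i.e. bridging the region governed by B\"ottcher coordinates near infinity and the region governed by the linearizing coordinate near the cycle. This is handled by a compactness argument on a fundamental annulus between two equipotentials, where only finitely many pull-back steps occur and where avoidance of critical points is an open condition satisfied at $c_0$; this is precisely the content of \cite[Lemma B.1]{GM93}, and the symmetry together with the specific form $p_c(z)=z^3-3c^2z$ enters only through the explicit bounds $\M^{sy}_3\subset\D_2$ and $J(p_c)\subset\D_4$ established in the proof of Theorem~\ref{t:connect}, which make the relevant compact sets concrete.
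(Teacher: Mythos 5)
The paper does not prove this theorem: it is stated verbatim as a citation of Lemma~B.1 of \cite{GM93} and is used as a black box throughout, so there is no internal proof to compare against. Your sketch is a faithful rendering of the standard Goldberg--Milnor argument --- holomorphic motion of the repelling cycle via the implicit function theorem, persistence of the linearizing coordinate, analyticity of the B\"ottcher coordinate $B_c(z)$ in $(c,z)$ near infinity, bridging the two regimes through a compact fundamental annulus where avoidance of the critical points is an open condition, and then a finite pull-back from the periodic angle $\ta^*=\si_3^m(\ta)$ down to $\ta$. Nothing in that argument uses $d=2$, so the adaptation to $p_c(z)=z^3-3c^2z$ is as routine as you claim. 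One small point worth stating explicitly rather than sweeping into ``stays uniformly away from the critical points'': in the inductive pull-back step what you actually need is that no critical \emph{value} of $p_c$ lies on $R_{\si_3^{k}(\ta)}(c)$, so that the inverse branch is single-valued and unbranched along the whole ray; at $c=c_0$ this is exactly the hypothesis that every ray in the forward orbit of $R_\ta(c_0)$ is smooth (a critical value on $R_{\si_3^{k}(\ta)}(c_0)$ would force $R_{\si_3^{k-1}(\ta)}(c_0)$ to be non-smooth), and it is preserved for $c$ near $c_0$ because it is an open condition on a compact piece of the ray between two equipotentials together with convergence of the ray to its landing point.
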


Recall that the lamination $\lam_c$ is defined for all $c$ such that $K(p_c)$ is locally connected.
In particular, if $\H$ is a hyperbolic component, then for any $c\in \H$ the lamination
$\lam_c$ exists and is independent of the choice of $c$.
Denote it by $\lam_\H$; clearly, $\lam_\H$ is a symmetric Fatou lamination.

\begin{definition}[Geometrically finite and sub-hyperbolic \cite{hai00}] A polynomial $f$ is \emph{geometrically finite}
if all its critical points are preperiodic points or attracted by parabolic or attracting
cycles. If, moreover, $f$ has no parabolic points, then it is said to be \emph{sub-hyperbolic}.
\end{definition}

Theorem \ref{t:parattr} based on \cite{hai00} and additional arguments.

\begin{theorem}[\cite{hai00}]\label{t:parattr} The following holds.

\begin{enumerate}

\item A parabolic symmetric polynomial $p_c$  is accessible from a hyperbolic component
$\H$ of $\M^{sy}_3$ such that $\lam_\H=\lam_c$.

\item If $\H'$ is a hyperbolic component and $c'\in \partial \H'$ is such that $\lam_{c'}=\lam_{\H'}$, then
$c'=r_{\H'}$ and $\co^+_{c'}=\co^+_\H$.

\end{enumerate}

\end{theorem}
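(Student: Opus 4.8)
The plan is to establish Theorem~\ref{t:parattr} by combining H\"aini's results on parabolic polynomials in parameter slices with the lamination machinery developed above, arguing that the relevant hyperbolic component is forced to be unique and to carry a prescribed comajor.

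\medskip

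\textbf{Proof of part (1).} First I would invoke the results of \cite{hai00}: a geometrically finite polynomial with a parabolic cycle lies on the boundary of a hyperbolic component $\H$ and is accessible from $\H$ along an internal ray (this is exactly the content of H\"ainen's stability/perturbation theorem applied to our one-parameter slice $\scp$). The only thing to verify is that $\lam_\H=\lam_c$, i.e.\ that perturbing $c$ into $\H$ does not change the combinatorics. For this I would use Theorem~\ref{t:gm} (Lemma~B.1 of \cite{GM93}): the periodic and preperiodic external rays of $p_c$ that are involved in defining $\lam_c$ land stably and move continuously under small perturbation, so the landing pattern (hence the induced laminational equivalence) is preserved as we move from $c$ into $\H$. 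The parabolic domains of $p_c$ become attracting domains of $p_{c'}$ for $c'\in\H$ with the same boundary ray portraits, so the Fatou gaps of $\lam_c$ and $\lam_\H$ coincide. Since a symmetric Fatou lamination is determined by its Fatou gaps and their pullbacks (Lemma~\ref{l:bd} and the discussion of type B/D laminations), we get $\lam_\H=\lam_c$.

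\medskip

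\textbf{Proof of part (2).} Now suppose $\H'$ is a hyperbolic component and $c'\in\partial\H'$ satisfies $\lam_{c'}=\lam_{\H'}$. The map $p_{c'}$ has a non-repelling cycle (it lies on the boundary of a hyperbolic component, so by continuity of multipliers it has a cycle of multiplier on $\uc$); by Lemma~\ref{l:hyper}, Lemma~\ref{l:parab}, and Corollary~\ref{c:multiplier} this cycle is either parabolic, or Cremer/Siegel. The condition $\lam_{c'}=\lam_{\H'}$ forces $K(p_{c'})$ to be locally connected with the combinatorics of a Fatou lamination having genuine Fatou gaps; a Cremer or Siegel point would change the landing pattern of the periodic rays bounding the critical Fatou gaps of $\lam_{\H'}$ (the periodic major edges would fail to land at a common point of the appropriate type), so the cycle must be parabolic. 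Then part (1), applied to $p_{c'}$, says $p_{c'}$ is accessible from a hyperbolic component $\H$ with $\lam_\H=\lam_{c'}=\lam_{\H'}$; by Corollary~\ref{c:major2}, distinct symmetric Fatou laminations have disjoint comajors, hence in particular a symmetric Fatou lamination determines its hyperbolic component uniquely, so $\H=\H'$. Finally, by Lemma~\ref{l:para1} the parabolic point $z$ of $p_{c'}$ has ray (half-)multiplier equal to $1$; since $r\hspace{-2pt}\rho_{\H'}$ (resp.\ $r\hspace{-2pt}\tro_{\H'}$) extends continuously to $\ol{\H'}$ by Theorem~\ref{t:preperiodiccomponents} (resp.\ Theorem~\ref{t:preperiodiccomponents2}) and is injective there, the unique boundary point with value $1$ is $r_{\H'}$ by Definition~\ref{d:center}; thus $c'=r_{\H'}$. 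The equality $\co^+_{c'}=\co^+_\H$ then follows because the marked comajor is read off from the periodic major of the marked critical Fatou gap, which is the same leaf for $\lam_{c'}=\lam_\H$ under the matching marking.

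\medskip

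\textbf{Main obstacle.} I expect the delicate point to be ruling out the Cremer/Siegel alternative in part (2) and, relatedly, showing rigorously that the condition $\lam_{c'}=\lam_{\H'}$ is strong enough to pin down that $p_{c'}$ is parabolic rather than merely having a neutral cycle of some other type; this requires carefully matching the combinatorial data (ray portraits of critical Fatou gaps, Lemma~\ref{l:bd}) with the dynamical landing patterns, using Theorem~\ref{t:gm} and Theorem~\ref{t:43} to control which rays land where. The rest — uniqueness of $\H$ via Corollary~\ref{c:major2}, identification of $c'$ with $r_{\H'}$ via the multiplier extension theorems and Lemma~\ref{l:para1}, and the comajor equality — is comparatively routine.
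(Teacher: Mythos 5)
Your part (2) is essentially the paper's argument (the paper in fact disposes of it in one line, from the hypothesis $\lam_{c'}=\lam_{\H'}$ together with Lemma~\ref{l:para1} and the boundary extensions of Theorems~\ref{t:preperiodiccomponents}--\ref{t:preperiodiccomponents2}), and your extra care in ruling out the Cremer/Siegel alternative is reasonable. Part (1), however, has a genuine gap. The main theorem of \cite{hai00} does \emph{not} say that a parabolic polynomial is accessible from a hyperbolic component of the slice $\scp$; it produces a path $f_t$ of sub-hyperbolic (hence, in the type B/D setting, hyperbolic) \emph{monic centered cubic} polynomials with $f_0=p_c$ and $f_t|_{J(f_t)}$ topologically conjugate to $p_c|_{J(p_c)}$. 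There is no a priori reason this path stays inside the symmetric family, so your phrase ``applied to our one-parameter slice $\scp$'' asserts exactly the point that needs proof. The paper's proof spends almost all of its effort here: it places the path in a hyperbolic component $\widehat\H$ of the full cubic space, uses the Inou--Kiwi straightening of Theorems~\ref{t:preperiodiccomponents} and \ref{t:preperiodiccomponents2} to write the path as $(q_0(t),q_1(t))$, observes that the limit is of the form $(q,q)$ because the two (half-)multipliers have the same limit, and then replaces the path by the symmetric path $(q_0(t),q_0(t))$, which still converges to $p_c$ and lies in $\H=\widehat\H\cap\scp$. Without some such symmetrization step your accessibility claim is unsupported.

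A second, smaller problem is your verification that $\lam_\H=\lam_c$. Theorem~\ref{t:gm} controls only rays landing on \emph{repelling} periodic points and their preimages; the majors of $\lam_c$ land on the parabolic cycle, and these are precisely the rays whose landing pattern is unstable under perturbation (the parabolic point splits into an attracting and a repelling point, and one must argue which of these the rays follow). So ``the landing pattern is preserved as we move from $c$ into $\H$'' does not follow from Theorem~\ref{t:gm} alone; the paper handles the analogous delicate point separately and at length in Theorem~\ref{t:samelam}(1), via Lemmas~\ref{l:repel}, \ref{l:para1} and \ref{l:nogmajor}. In the paper's proof of the present theorem the equality $\lam_\H=\lam_c$ instead comes for free from the J-equivalence in \cite{hai00}: $f_t$ is topologically conjugate to $p_c$ on its Julia set, so all the $f_t$ have the same lamination as $p_c$.
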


\begin{proof}
(1) Let $p_{c}$ be the given parabolic polynomial. It is of type B or D.
By the main theorem of \cite{hai00}, there exists a path $f_t$ of monic centered cubic polynomials such that
$f_0=p_c,$ $f_t$ is sub-hyperbolic for $t>0$, and $f_t|_{J(f_t)}$ is topologically conjugate to $p_c$ on its Julia set.
Since $p_c$ is of type B or D, the polynomial $f_t$ is hyperbolic for any $t>0$ and, hence, there exists a hyperbolic component
$\widehat\H$ in the space of all monic centered cubic polynomials that contains the path $\{f_t\}_{t>0}$.

We now use the description of $\widehat\H$ given in Theorems \ref{t:preperiodiccomponents} and \ref{t:preperiodiccomponents2}.
In both cases (type D or type B), the straightening map ($\rho$ or $\tro$, resp.)  yields a biholomorphic parametrization of $\widehat\H$
by pairs $(q_0,q_1)$ of monic centered quadratic polynomials.
A path in $\widehat\H$ converging to $p_{c}$ can be represented as $(q_0(t),q_1(t))$, where $t\in (0,1]$,
and the latter path converges as $t\to 0$ to some $(q,q)$ since both multipliers or half-multipliers have the same limit.
It follows that the path in $\widehat\H$ represented by $(q_0(t),q_0(t))$ also converges to $p_c$.
On the other hand, this new path consists of symmetric cubic polynomials.
Thus $p_c$ is accessible from $\H=\widehat\H\cap\scp$, as desired.

(2) The claim follows from the assumption that $\lam_{c'}=\lam_{\H'}$ and Lemma \ref{l:para1}.
\end{proof}

The following is a combinatorial description of hyperbolic components of $\M^{sy}_3$.

\begin{theorem}
\label{t:center1}
The map taking a hyperbolic component $\H$ (other than $\H_{main}$) of $\M^{sy}_3$ to the corresponding marked comajor $\co^+_\H$
 is a bijection between the hyperbolic components of $\M^{sy}_3$ (with the exception of $\H_{main}$) and $1$-preperiodic comajors of $C_sCL$.
Two distinct marked parabolic polynomials must have distinct marked comajors.
\end{theorem}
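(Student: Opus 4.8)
The plan is to establish the two assertions separately. For the bijectivity statement, I would first show the map $\H \mapsto \co^+_\H$ is well-defined: for a hyperbolic component $\H \ne \H_{main}$, the lamination $\lam_\H$ is a non-empty symmetric Fatou lamination (type B or D), so by Lemma \ref{l:pre1} it has a $1$-preperiodic comajor, and the marking of the critical point $c \in F^+_c$ singles out $\co^+_\H$ among the two comajors. Injectivity follows from Corollary \ref{c:major2}: distinct symmetric Fatou laminations have disjoint (in particular distinct) comajors, and a hyperbolic component determines its lamination $\lam_\H$, so distinct $\H$'s with distinct laminations give distinct comajors; the only remaining subtlety is that two hyperbolic components could in principle share the same lamination — but for type B or D the straightening results (Theorems \ref{t:preperiodiccomponents}, \ref{t:preperiodiccomponents2}) identify each such $\H$ as a slice of a hyperbolic component $\hat\H$ of the full cubic connectedness locus carrying $\lam$, and that slice is uniquely the diagonal (type D) or the $q_0 = q_1$ slice (type B), so $\lam$ determines $\H$. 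For surjectivity, given a $1$-preperiodic comajor $\bc$ of $C_sCL$, by Theorem \ref{t:ccl}(3) it is the comajor of a Fatou lamination $\lam$; by Lemma \ref{l:pre1} (read backwards) any such $\lam$ with a $1$-preperiodic comajor is Fatou, and one needs to produce a hyperbolic component realizing $\lam$. Here I would use Theorem \ref{t:maingap}/Theorem \ref{t:cardio} together with the Inou–Kiwi picture: the lamination $\lam$ has the structure of $H$ plus rotating Fatou gaps, and the corresponding center — a symmetric cubic polynomial with a superattracting cycle whose lamination is $\lam$ — can be found because the combinatorial data of $\lam$ is realized by some $p_c \in \scp$ (realization of comajor laminations from the prequels \cite{paper1,paper2}), and this $c$ lies in a hyperbolic component $\H$ with $\lam_\H = \lam$ and $\co^+_\H = \bc$.

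For the second assertion — two distinct marked parabolic polynomials have distinct marked comajors — I would argue as follows. Let $p_{c_1}$, $p_{c_2}$ be parabolic with marked comajors $\co^+_{c_1} = \co^+_{c_2} = \bc$. By Theorem \ref{t:parattr}(1), each $p_{c_i}$ is accessible from a hyperbolic component $\H_i$ with $\lam_{\H_i} = \lam_{c_i}$, and by part (2) of that theorem $c_i = r_{\H_i}$ and $\co^+_{c_i} = \co^+_{\H_i}$. Hence $\co^+_{\H_1} = \co^+_{\H_2} = \bc$, so by the bijectivity just proved $\H_1 = \H_2$; call it $\H$. Then $c_1 = r_\H = c_2$, so the polynomials coincide — contradicting distinctness. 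The key input is Theorem \ref{t:parattr}, which pins down a parabolic parameter as the root of the hyperbolic component sharing its lamination and forces agreement of the marked comajors.

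The main obstacle I anticipate is the surjectivity half of the bijection, specifically producing, from a purely combinatorial $1$-preperiodic comajor $\bc$ of $C_sCL$, an actual symmetric cubic polynomial (its center) whose dynamical lamination equals the abstract lamination $\lam$ associated to $\bc$. This requires invoking a realization theorem — that every symmetric cubic comajor lamination is realized by some $p_c$ — which presumably rests on \cite{paper1,paper2} and on the straightening machinery of \cite{IK12}; organizing this cleanly, and making sure the marking of the critical point is tracked correctly through the straightening (so that $\co^+$ rather than $\co^-$ is the one realized by the marked $c \in F^+_c$), is the delicate step. A secondary point requiring care is confirming that no two distinct hyperbolic components carry the same lamination — the straightening theorems handle types B and D, but one should check the type A component $\H_{main}$ is genuinely excluded (its lamination is empty, with no comajor), which the statement already does.
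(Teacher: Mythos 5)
The paper's proof of the bijection is much more direct than your plan: it cites Poirier's classification of postcritically finite polynomials by critical portraits \cite{poi92} (extending Bielefeld--Fisher--Hubbard \cite{BFH92}), with injectivity coming from Poirier's Theorem~1.1 and surjectivity from Theorem~1.3, combined with the observation that marked comajors of $C_sCL$ correspond bijectively to symmetric cubic Fatou critical portraits of period $>1$. Your plan tries to rebuild this from internal tools (Corollary~\ref{c:major2}, the straightening Theorems~\ref{t:preperiodiccomponents}--\ref{t:preperiodiccomponents2}), which is a genuinely different route, but it leaves a real gap precisely at the step you flag as delicate.

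For injectivity: Corollary~\ref{c:major2} gives you $\co^+_{\H_1}=\co^+_{\H_2}\Rightarrow\lam_{\H_1}=\lam_{\H_2}$, but you then want $\lam_{\H_1}=\lam_{\H_2}\Rightarrow\H_1=\H_2$. Be careful: equal \emph{laminations} need not give equal \emph{marked} comajors (the two markings give $\co^+$ and $\co^-=\tau(\co^+)$, corresponding to $\H$ and $-\H$), and your straightening argument implicitly assumes that $\lam$ determines the ambient cubic component $\hat\H$ — a fact which itself essentially is Poirier's rigidity, so the argument risks circularity. For surjectivity: Inou--Kiwi straightening analyzes the structure of already-existing hyperbolic components; it does not by itself produce a polynomial realizing an abstract critical portrait. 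Theorems~\ref{t:maingap} and \ref{t:cardio} are about the gap structure of $C_sCL$ and likewise don't realize laminations by polynomials. The needed realization result is exactly Poirier's Theorem~1.3, not something from \cite{paper1,paper2} or \cite{IK12}; without invoking it, your surjectivity argument does not close.

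Your proof of the second assertion — that distinct marked parabolic polynomials have distinct marked comajors, via Theorem~\ref{t:parattr} identifying each parabolic parameter as the root of a unique hyperbolic component with the given marked comajor — matches the paper's argument essentially verbatim and is correct.
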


\begin{proof}
Bijection follows from results of A. Poirier \cite{poi92}, which
 in turn extend earlier work of Bielefeld--Fisher--Hubbard \cite{BFH92}.
Namely, injectivity is
Theorem 1.1 
and surjectivity is Theorem 1.3. 
More precisely, the map 
defined in \cite{poi92} sends $\H$ to the corresponding \emph{critical portrait}.
On the other hand, there is a 
bijection between the marked comajors of $C_sCL$
 and symmetric cubic Fatou critical portraits of period $>1$.

It remains to prove the last claim.
Suppose now that we have two parabolic marked polynomials $p_c$ and $p_{c'}$ associated with the same marked comajor $\ell$.
By Theorem, \ref{t:parattr}, the parameter $c$ is the root of a
unique hyperbolic domain $\H$ such that $\co^+_\H=\ell$, and, similarly, $p_{c'}$ is the root of a
unique hyperbolic domain $\H'$ such that $\co^+_{\H'}=\ell$.
By the above, $\H=\H'$.
Hence $c=c'$ is the root of $\H$, and $p_c=p_{c'}$ is the same marked polynomial.
\end{proof}

We will now study the place of a parabolic parameter $c$ in the parameter space.
More precisely, given a hyperbolic component $\H$ we show that $\lam_\H=\lam_{r_\H}$.
We also consider parameter rays $\Rc_\ta$ with 1-preperiodic argument $\ta$, show that they
land on a parabolic parameter $c$, and relate $\ta$ and the marked comajor associated with $p_c$.

We will interchangeably use notation $3\ta$ and $\si_3(\ta)$ for any $\ta\in \uc$. Recall also that
$\tau$ denotes the rotation of $\cdisk$ or $\uc$ by $180^{\degree}$.

\begin{lemma}\label{l:nogmajor}
If $\lam'\subset \lam$ are two symmetric laminations such that $\lam'\ne \0$ is Fatou
then $\lam$ cannot contain a finite periodic gap $G$ located inside a Fatou gap $U'$ of $\lam'$
with an edge which is a major of $\lam'$.
\end{lemma}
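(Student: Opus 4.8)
The plan is to argue by contradiction, exploiting the structure of Fatou laminations described in Lemma \ref{l:bd} together with the "no escaping the short strips" principle of Lemma \ref{l:sh}. Suppose $\lam'\subset\lam$ are symmetric laminations, $\lam'\ne\0$ is Fatou, and $\lam$ contains a finite periodic gap $G$ inside a Fatou gap $U'$ of $\lam'$ one of whose edges $M'$ is a major of $\lam'$. The first step is to set up notation: let $U'$ have period $N$ (in the sense appropriate to its type, B or D), let $M'=\ol{\al\be}$ be the periodic major of $\lam'$ that is an edge of $U'$ (the periodic one exists by Lemma \ref{l:bd}), and consider the first (half-)return map $\eta$ of $U'$ from Definition \ref{d:eta} and Lemma \ref{l:eta}. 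By Lemma \ref{l:eta}, $\eta$ is $2$-to-$1$ on $\partial U'$, semiconjugate to $\si_2$ via the edge-collapsing map $\phi$, and its unique fixed edge is the major $M'$.

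The core of the argument is to track $G$ under the relevant iterate. Since $G\subset U'$ is a finite gap of $\lam$ and $\lam'\subset\lam$, the leaves of $\lam$ inside $U'$ form an $\eta$-invariant (in the appropriate sense) sublamination modeled by a $\si_2$-invariant lamination; in fact this is precisely the mechanism used in Theorem \ref{t:cardio}. Then $G$ corresponds, under $\phi$, to a finite $\si_2$-gap, hence to a finite periodic gap of a quadratic lamination. The key point is that $G$ has an edge $\ell$ equal to the major $M'$ of $\lam'$ — but $M'$ maps under $\eta$ to itself, i.e.\ $\phi(M')$ is the $\si_2$-fixed point. A finite periodic gap of a $\si_2$-invariant lamination cannot have the fixed point of $\si_2$ among its collapsed vertices in a way consistent with periodicity: the $\si_2$-fixed point $0$ is a degenerate leaf, and no nondegenerate finite $\si_2$-gap is periodic with $0$ as a vertex while also being strictly inside the Fatou gap — equivalently, pulling back to the cubic picture, an edge of $G$ equal to $M'$ would force $G$ and $U'$ to share the major, contradicting that $G$ lies in the \emph{interior} of (the region bounded by) $U'$'s boundary and is a finite gap distinct from $U'$. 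I would make this precise by observing that if $\ell=M'$ is an edge of the periodic finite gap $G$, then $\si_3^{j}(\ell)$ (for $j$ a multiple of $N$, using the half-return correction in type B) equals $\ell$ again, so the orbit of $G$ stays adjacent to $M'$; but then $\V(G)$ must accumulate on the endpoints of $M'$ or $G$ must actually be an $\eta$-rotational gap with $M'$ an edge — and a symmetric rotational gap attached along $M'$ inside $U'$ is exactly the configuration that generates the Fatou gap $U'$ in the first place (Theorem \ref{t:maingap}), so $G$ cannot be strictly interior, a contradiction.

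Alternatively, and perhaps more cleanly, I would phrase the whole thing through Corollary \ref{c:major2} (distinct symmetric Fatou laminations have disjoint comajors) and the monotonicity of comajors under inclusion of short-strip sets: the gap $G$ with an edge on the major $M'$ of $\lam'$ would be a periodic finite gap of $\lam$ that, when one builds the tuning lamination it determines, produces a comajor located \emph{under} the comajor of $\lam'$ yet sharing an endpoint or crossing it — violating the laminational (non-crossing) property together with Corollary \ref{c:major2}. The main obstacle I anticipate is bookkeeping the type B case correctly: there the return map involves the $\tau$-twist $\tau\circ\si_3^m$, the relevant "fixed edge" statement from Lemma \ref{l:eta} must be invoked with the half-return $\eta$ rather than a genuine power of $\si_3$, and the symmetry means $G$ and $\tau(G)$ interact, so one must check that no finite periodic gap can sit inside $U'$ touching $M'$ without being forced by symmetry to coincide with part of the rotational core. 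I would handle this by reducing both types uniformly to the statement about $\eta$ and $\si_2$ via $\phi$, where the obstruction is simply that $0$ is a degenerate $\si_2$-leaf and cannot be a vertex of a nondegenerate finite periodic $\si_2$-gap interior to the central Fatou gap.
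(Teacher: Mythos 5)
Your proposal follows essentially the same route as the paper's proof: pass to the first (half-)return map $\eta$ of $U'$, use the semiconjugacy $\phi$ with $\si_2$ from Lemma \ref{l:eta}, observe that the (necessarily periodic) major edge of $G$ collapses to the $\si_2$-fixed point, and conclude that $G$ would project to a nondegenerate finite invariant $\si_2$-gap or leaf with a fixed endpoint, which is impossible. The correct obstruction is the one you state at the very end (a fixed vertex forces the rotation on the vertices to be trivial, hence the gap to be degenerate); the intermediate remarks --- that $G$ and $U'$ ``sharing the major'' already contradicts $G$ being inside $U'$ (it does not; that is exactly the hypothesis), the appeal to Theorem \ref{t:maingap}, and the undeveloped alternative via Corollary \ref{c:major2} --- are red herrings and should be dropped.
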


\begin{proof}
Suppose by way of contradiction that the gap $G$ as described in the lemma exists and $M=\ol{xy}$ is a major of $\lam'$ and an edge of $G$.
Consider the first (half-)return map $\eta$ (see Definition \ref{d:eta}) of $U'$ with respect to $\lam'$.
Then $\eta|_{\partial U'}$ is two-to-one. If $G$ exists, it is $\eta$-invariant, which is impossible.
Indeed, $\eta|_{\partial U'}$ is two-to-one and semiconjugate to $\si_2$
by the map collapsing the edges of $U'$ to points; by Lemma \ref{l:bd} all edges of $U'$ are
preimages of its major.
Hence the existence of $G$ implies the existence of an invariant leaf or gap of $\si_2$ with a $\si_2$-fixed endpoint,
 which is absurd.
\end{proof}


\begin{lemma}\label{l:landing1}
Every parameter ray $\Rc_\ta$ at a 1-preperiodic angle $\theta$ lands on a parabolic parameter $c_0$
on the boundary of $\M^{sy}_3$. Moreover, one of the rays $R_{\ta\pm \frac13}(c_0)$
lands on a parabolic periodic point of $p_{c_0}$.
\end{lemma}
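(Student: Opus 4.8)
The plan is to follow the standard Goldberg--Milnor strategy for showing that parameter rays at (pre)periodic angles land, adapted to the symmetric cubic setting via the cocritical coordinate $\Psi$. First I would recall that, by the construction of $\Psi$ in the proof of Theorem~\ref{t:connect}, the parameter ray $\Rc_\ta$ consists exactly of those $c\notin\M^{sy}_3$ for which the marked cocritical point $-2c$ lies on the dynamical ray $R_\ta(c)$; equivalently, since $\si_3(-2c)=2c^3=-p_c(c)$ has the same Green function value as $\pm c$, the dynamical rays $R_{\ta\pm\frac13}(c)$ (the two preimages of $R_{\si_3\ta}(c)$ that can contain $\pm c$) carry the two finite critical values' preimage structure. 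The key point is that $\ta$ being $1$-preperiodic means $\si_3(\ta)$ is $\si_3$-periodic, say of period $n$, so the ray $R_{\si_3\ta}$ is a periodic dynamical ray.

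The core argument is a compactness/stability step. I would take a sequence $c_k\to c_0\in\partial\M^{sy}_3$ along $\Rc_\ta$ (the accumulation set of $\Rc_\ta$ lies in $\partial\M^{sy}_3$ since $\Psi$ is a conformal isomorphism onto $\C\sm\Dbar$ by Theorem~\ref{t:connect}), and show that $p_{c_0}$ cannot have all cycles repelling: if it did, then by Theorem~\ref{t:gm} (the Goldberg--Milnor stability lemma, quoted here as Theorem~\ref{t:gm}) the periodic ray $R_{\si_3\ta}(c)$ and its finitely many preimages through $R_{\ta\pm\frac13}(c)$ would move continuously and land on repelling preperiodic points for all $c$ near $c_0$; but then for $c$ on the ray $\Rc_\ta$ near $c_0$ the cocritical point $-2c\in R_\ta(c)$ is forced to stay bounded away from the Julia set while simultaneously the ray $R_\ta$ must land, a contradiction with $c_k\in\C\sm\M^{sy}_3$ accumulating at $c_0$ (here one uses that a (pre)critical ray is precisely the obstruction to smoothness, so $R_{\ta\pm\frac13}(c_0)$ must hit a critical point of $p_{c_0}$, i.e. $\pm c_0$, forcing $-2c_0$ to be eventually critical and hence $c_0\in\M^{sy}_3$, a contradiction). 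Thus some cycle of $p_{c_0}$ is non-repelling. By Lemma~\ref{l:cs}, Lemma~\ref{l:hyper} and Lemma~\ref{l:parab} this non-repelling cycle is either attracting, parabolic, or a CS-cycle; I would rule out the attracting case (an attracting parameter is interior to $\M^{sy}_3$, but $c_0\in\partial\M^{sy}_3$), and rule out the CS and Cremer possibilities by a stronger version of the same ray-stability argument: the $1$-preperiodic ray $R_\ta(c_0)$ lands, and its landing point's orbit relation forces the associated cycle to be the landing point of rational rays, which a CS-cycle cannot be (Theorem~\ref{t:43}: distinct CS-points/parabolic/attracting domains are separated by rational cuts, and a CS-point is not the landing point of any rational ray). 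Hence $c_0$ is parabolic.

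Once $c_0$ is known to be parabolic, the statement that $\Rc_\ta$ actually \emph{lands} (not merely accumulates) at $c_0$, and that one of $R_{\ta\pm\frac13}(c_0)$ lands on a parabolic periodic point, follows from local analysis near the parabolic parameter: the ray $R_{\ta\pm\frac13}(c_0)$ must crash into a parabolic periodic point of $p_{c_0}$ (it cannot land on a repelling point by Theorem~\ref{t:gm}, and it cannot hit a critical point since that would put $c_0\in\M^{sy}_3$), which is exactly the dynamical manifestation of the parameter ray hitting the root; uniqueness of the landing parameter then follows because the accumulation set of $\Rc_\ta$ is connected and contained in the (discrete in this combinatorial type) set of parabolic parameters with the prescribed combinatorics, forced by Lemma~\ref{l:para1} (which pins down the major $\ol{\ta'\ta''}$ of $\lam_{c_0}$ whose rays land on the parabolic point).

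The main obstacle I anticipate is the landing (as opposed to accumulation) step: proving that the accumulation set of $\Rc_\ta$ is a single point rather than a nondegenerate continuum of parabolic parameters. I expect to handle this exactly as in Goldberg--Milnor: combine the fact that parabolic parameters of a fixed ray period form a discrete (indeed finite) set with the connectedness of the prime-end impression of $\Rc_\ta$; alternatively, one can argue that the landing point of the dynamical ray $R_{\ta\pm\frac13}(c)$ depends holomorphically on $c$ near $c_0$ in the escape region and the equation pinning $-2c$ to a parabolic point is rigid. The rest of the argument is a routine transcription of the standard theory, using the already-established dictionary between $\Psi$-parameter rays and cocritical dynamical rays and the stability Theorem~\ref{t:gm}.
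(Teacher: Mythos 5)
Your overall strategy is the same as the paper's (Goldberg--Milnor stability via Theorem~\ref{t:gm}, then discreteness of parabolic parameters of a given ray period to upgrade accumulation to landing), and the last two steps of your sketch are fine. But the central contradiction step is garbled, and it is precisely the step that makes the whole argument work.

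First, you frame the goal as ``show $p_{c_0}$ cannot have all cycles repelling.'' That is the wrong target: what the lemma needs is that the landing point $z_0$ of the specific periodic ray $R_{3\ta}(c_0)$ is parabolic. Since $R_{3\ta}(c_0)$ is a periodic dynamical ray of a polynomial with connected Julia set, its landing point is automatically repelling or parabolic (never attracting, Siegel, or Cremer), so your detour through Lemmas~\ref{l:hyper}, \ref{l:parab}, \ref{l:cs} and Theorem~\ref{t:43} to exclude attracting and CS cases is unnecessary and suggests the dichotomy you are working with is off. The only thing to rule out is $z_0$ repelling.

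Second, and more seriously, your proposed contradiction is wrong as stated. You write that stability would force ``$-2c_0$ to be eventually critical and hence $c_0\in\M^{sy}_3$, a contradiction'' --- but $c_0\in\partial\M^{sy}_3\subset\M^{sy}_3$ is \emph{true}, not a contradiction, so the argument collapses. The actual mechanism (which the paper uses) is: for $c\in\Rc_\ta$, the periodic ray $R_{3\ta}(c)$ is \emph{not smooth}. This follows from the observation that, since $\ta$ is $1$-preperiodic and $3\ta$ is periodic of period $N$, we have $\si_3^{N-1}(3\ta)\in\{\ta+\tfrac13,\ta-\tfrac13\}$ (it cannot be $\ta$ itself, as $\ta$ is not periodic), and on $\Rc_\ta$ the rays $R_{\ta\pm\frac13}(c)$ hit the critical points $\pm c$; hence the cycle of rays containing $R_{3\ta}(c)$ crashes into a critical point. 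If $z_0$ were repelling, Theorem~\ref{t:gm} would instead make $R_{3\ta}(c)$ smooth for all $c$ near $c_0$ --- and \emph{that} is the contradiction. Your sketch gestures at ``the cocritical ray structure'' but never isolates the fact that some forward image of $3\ta$ equals $\ta\pm\frac13$, which is the whole point; without it, there is no reason a repelling $z_0$ would be incompatible with $c$ escaping along $\Rc_\ta$.

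Once the contradiction is set up correctly, your closing observations are fine: $z_0$ is parabolic, of period dividing $N$ with $(p_{c_0}^N)'(z_0)=1$, so $c_0$ lies in a finite algebraic set; connectedness of the accumulation set then gives landing; and since $\si_3^{N-1}(3\ta)=\ta\pm\frac13$, the ray $R_{\ta\pm\frac13}(c_0)$ lands at $p_{c_0}^{N-1}(z_0)$, a point of the parabolic cycle. Please repair the contradiction step and drop the unnecessary attracting/Cremer/Siegel discussion.
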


\begin{proof}
We claim that every parameter ray $\Rc_\ta$ at a 1-preperiodic angle $\theta$ lands on a parabolic parameter $c_0\in \partial
\M^{sy}_3$, and the dynamical ray $R_{3\ta}(c_0)$ lands on a $p_{c_0}$-parabolic point.
Indeed, let $c_0\in\partial \M^{sy}_3$ be in the accumulation set of $\Rc_\theta$. Recall that for $c\in \Rc_\ta$,
the dynamical external ray $R_\ta(c)$ passes through the cocritical point $-2c$ escaping to infinity
under the action of $p_c$. Thus, the dynamical external ray $R_{3\ta}(c)$ has a periodic argument $3\ta$
and, therefore, is not smooth because an eventual $\si_3$-image of $3\ta$ is the argument of an external ray that
terminates at the critical point $c$. In particular, $R_{3\ta}(c)$ is not smooth.

On the other hand, the dynamical external ray $R_{3\ta}(c_0)$ of $p_{c_0}$ is periodic and
lands on a repelling or parabolic periodic point $z_0$ of $p_{c_0}$. Hence $R_\theta(c_0)$
lands on a non-periodic point $z_1$ such that $p_{c_0}(z_1)=z_0$.
Since $c_0\in \M^{sy}_3$, all dynamical external rays of $p_{c_0}$ are smooth.
If $z_0$ is a repelling point of $p_{c_0}$, then, by Theorem \ref{t:gm},
the dynamical external ray $R_{3\ta}(c)$ is smooth and
lands on a repelling periodic point for all $c$ close
to $c_0$. However, by the previous paragraph, if $c\in \Rc_\theta$ then
$R_{3\ta}(c)$ is not smooth. This shows that $z_0$ is a parabolic point of $p_{c_0}$.

Since the dynamical ray $R_{3\theta}(c_0)$ lands on $z_0$, the period of 
 $z_0$ divides the $\si_3$-period $N$ of $3\theta$.
Moreover, the multiplier of $z_0$ with respect to $p_{c_0}^N$ is equal to 1.
That leaves finitely many candidates for an accumulation parameter $c_0$.
Indeed, parabolic parameters $c_0$, for which there exists an $N$-periodic point of multiplier 1,
form an algebraic subset of $\scp$; this algebraic set is either finite or the entire plane, the latter option being clearly nonsensical.
As  the accumulation set of a ray is connected, it consists of exactly one such parabolic parameter.
It follows also that one of the dynamical external rays $R_{\ta\pm \frac13}(c_0)$ lands on a parabolic point of $p_{c_0}$.
\end{proof}

Recall that if $J(f)$ is locally connected, then there is a well-defined $\si_d$-invariant lamination $\lam_f$.

\begin{definition}[Repelling leaves]
  \label{d:repleaf}
Let $f$ be a degree $d>1$ monic polynomial.
For such $f$, define \emph{repelling} (or \emph{$f$-repelling}) leaves of $\lam_f$ as leaves corresponding
 to pairs of rays landing on the same repelling periodic point of $f$ or an iterated preimage thereof.
Similarly, we can talk of \emph{parabolic} (or $f$-\emph{parabolic}) leaves of $\lam_f$.
\end{definition}

Repelling leaves and related laminations were used in \cite{botw23} in the proof of continuity of the constructed there
monotone (except for one point) model of the entire cubic connectedness locus.

\begin{lemma}\label{l:repel}
Suppose that a sequence of monic degree $d>1$ polynomials $f_n$ converges to a polynomial $f$
 (necessarily monic of degree $d$) as $n\to\infty$.
Then the following holds.

\begin{enumerate}

\item Let the dynamical external rays of $f_n$ with periodic arguments $\ta_1,$ $\dots,$ $\ta_m$ land on the same point
 (neither $m$ nor the angles $\ta_i$ depend on $n$).
If the landing points of $R_{\ta_1}(f)$, $\dots$, $R_{\ta_m}(f)$ are repelling, then
 they coincide.

\item If 
$J(f)$ and all $J(f_n)$ are locally connected, $\lam_{f_n}=\lam$ for some lamination $\lam$,
 and no critical point of
 $f$ is mapped to a repelling periodic point, then all repelling leaves of $\lam_f$ belong to $\lam$.

\end{enumerate}

\end{lemma}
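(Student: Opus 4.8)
The plan is to prove both parts using the Grötzsch–Lehto style stability of external rays landing on repelling periodic points, i.e.\ Theorem \ref{t:gm} and its standard consequences; here we only have it stated for symmetric cubics, but the general monic-polynomial version (Lemma B.1 of \cite{GM93}) is what is actually invoked, and I will use that. For part (1), fix a point $w_n$ on which the rays $R_{\ta_1}(f_n),\dots,R_{\ta_m}(f_n)$ all land; since the $\ta_i$ are periodic, $w_n$ is a periodic point, of period dividing a common multiple $N$ of the $\si_d$-periods of the $\ta_i$. Let $w$ be the (repelling, by hypothesis) landing point of $R_{\ta_1}(f)$. First I would show $w_n\to w$: by Theorem \ref{t:gm} applied to the repelling periodic point $w$ of $f$ and the rational angle $\ta_1$, for $n$ large the ray $R_{\ta_1}(f_n)$ is smooth and lands on a repelling periodic point close to $w$; but a polynomial has only finitely many periodic points of period dividing $N$, and only one of them is close to $w$, so that point must be $w_n$ for large $n$, and $w_n\to w$. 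Running the same argument with $\ta_2,\dots,\ta_m$ in place of $\ta_1$ shows each of the landing points $R_{\ta_i}(f)$ is the limit of the same sequence $w_n$, hence they all equal $w$. This is the crux; everything else is bookkeeping.

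For part (2) I would argue as follows. A repelling leaf of $\lam_f$ is, by Definition \ref{d:repleaf}, either a leaf $\ol{\al\be}$ with $R_\al(f)$, $R_\be(f)$ landing on a common repelling periodic point, or an iterated $\si_d$-preimage of such a leaf. Consider first the periodic case: let $\ol{\al\be}$ be such a leaf, so $R_\al(f)$ and $R_\be(f)$ land on a repelling periodic point $z$ of period dividing some $N$; then $\al,\be$ are $\si_d$-periodic with period dividing $N$. Since $\lam_{f_n}=\lam$ for all $n$, it suffices to show $\ol{\al\be}\in\lam$, i.e.\ that $\ol{\al\be}$ is a leaf of $\lam_{f_n}$ for some (equivalently every) $n$. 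By part (1) applied to the angles $\al,\be$ (both periodic, and their $f$-landing points repelling by hypothesis), for large $n$ the rays $R_\al(f_n)$ and $R_\be(f_n)$ land on the same point of $J(f_n)$; hence $\ol{\al\be}$ is identified under $\sim_{f_n}$, so $\ol{\al\be}$ lies in $\lam_{f_n}=\lam$. For the non-periodic (preimage) case, suppose $\si_d^k(\ol{\al\be})=\ol{\al'\be'}$ with $\ol{\al'\be'}$ a periodic repelling leaf of $\lam_f$ and $k$ minimal; by the part just proved, $\ol{\al'\be'}\in\lam$. Now I use the hypothesis that no critical point of $f$ is mapped to a repelling periodic point: this guarantees that the rays $R_\al(f),\dots$ along the backward orbit of $z$ do not pass through (pre)critical points, so the landing point $y$ of $R_\al(f)$ and $R_\be(f)$ is the unique preimage of the landing point of $R_{\al'}(f)$ lying in the appropriate preimage sector, and in particular $R_\al(f)$ and $R_\be(f)$ land at the same point. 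The same reasoning for $f_n$ (whose laminations all equal $\lam\ni\ol{\al'\be'}$, and for which the rays along this backward orbit are smooth by Theorem \ref{t:gm} since they move continuously from those of $f$) shows $\ol{\al\be}$ is a leaf of $\lam_{f_n}=\lam$.

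The step I expect to be the main obstacle is the non-periodic case of part (2): making precise that ``no critical point of $f$ maps to a repelling periodic point'' is exactly what is needed to take clean preimages of the repelling landing relation, both for $f$ and simultaneously for all $f_n$ near $f$. One has to rule out that some $R_\al(f)$ with $\al$ in the backward orbit of a periodic angle runs into a precritical point — but if it did, its landing point would be a preimage of a repelling periodic point under some iterate, and following it forward would exhibit a critical point mapping to a repelling periodic point, contradicting the hypothesis. A secondary technical point is the uniform control over the finite backward orbit as $n\to\infty$: this is handled by finitely many applications of Theorem \ref{t:gm} (one for each ray in the orbit of $R_\al$), exactly as in the cited argument from \cite{botw23}. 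Part (1) and the periodic case of part (2) are then essentially immediate from the isolatedness of periodic points together with the stability theorem.
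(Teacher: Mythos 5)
Your proposal is correct and takes essentially the same route as the paper, whose entire proof is the one-line remark that the lemma follows from Theorem \ref{t:gm}; you have simply supplied the standard details (stability of rays landing on repelling periodic points, plus local uniqueness of the holomorphically continued periodic point and of its non-critical iterated preimages). One small bookkeeping slip: in the periodic case of part (2) you invoke ``part (1)'' to conclude that $R_\al(f_n)$ and $R_\be(f_n)$ land together, but part (1) is the converse implication --- what you actually need (and in fact use) is Theorem \ref{t:gm} together with the isolatedness argument from your proof of part (1), so this does not affect correctness.
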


\begin{proof}
The lemma follows from 
Theorem \ref{t:gm}.
\end{proof}

We are going to apply Lemma \ref{l:repel} to the situation where $f$ is on the boundary of a hyperbolic domain $\H$ in some
parameter space of polynomials and $f_n\in \H$ for each $n$.

\begin{lemma}\label{l:forcing}
Suppose that a parameter $c\in\scp\sm\M_3^{sy}$ is such that the rays $R_c(\ta\pm\frac{1}{3})$ hit a critical point.
Consider any $\si_3$-periodic polygon $T$ whose iterated forward $\si_3$-images are disjoint from
 the critical leaves connecting $\ta+\frac 13$ with $\ta-\frac 13$
 and $\ta-\frac 16$ with $\ta+\frac 16$.
Then all dynamical external rays of arguments that are vertices of $T$ land on the same point.
\end{lemma}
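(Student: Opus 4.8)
The plan is to realize $p_c$ as a limit of polynomials from a hyperbolic component and then transport the landing structure back. First I would observe that since the rays $R_c(\ta\pm\frac13)$ hit a critical point, the angle $\ta$ is $1$-preperiodic (its image $3\ta$ is $\si_3$-periodic), so $c$ lies on the parameter ray $\Rc_\ta$; by Lemma \ref{l:landing1} this ray lands at a parabolic parameter $c_0\in\partial\M^{sy}_3$, one of whose dynamical rays $R_{\ta\pm\frac13}(c_0)$ lands on a parabolic periodic point. By Theorem \ref{t:parattr}(1) the parameter $c_0$ is accessible from a hyperbolic component $\H$ with $\lam_\H=\lam_{c_0}$, and by Theorem \ref{t:center1}/Theorem \ref{t:parattr}(2) we may write $c_0=r_\H$. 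Pick a sequence $c_n\in\H$ with $c_n\to c_0$; then $\lam_{c_n}=\lam_\H$ for all $n$. The critical leaves $\ol{(\ta-\frac13)(\ta+\frac13)}$ and $\ol{(\ta-\frac16)(\ta+\frac16)}$ are precisely the two critical leaves (major/comajor type) of the Fatou lamination $\lam_\H$, since $3\ta$ is the periodic argument at which the dynamical rays land on the parabolic (for $c_0$) or attracting (for $c_n$) cycle; the comajor edge is $\ol{(\ta-\frac16)(\ta+\frac16)}$ up to relabeling, and the polygon $T$ is assumed to have forward orbit disjoint from both.

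Next I would set up the lamination $\lam_\H$ restricted to the critical Fatou gap and push $T$ through it. Since no iterate of $T$ meets either critical leaf of $\lam_\H$, the polygon $T$ never meets the boundary of any critical gap of $\lam_\H$ along its forward orbit — more precisely $T$ and all $\si_3^k(T)$ lie in a single complementary region at each step and the restriction of $\si_3$ to the relevant gaps is, by Lemma \ref{l:eta}, semiconjugate to $\si_2$ via the edge-collapsing map $\phi$. The key point is that a $\si_3$-periodic polygon whose orbit avoids the critical leaves corresponds, under this structure, to a genuine periodic polygon of $\lam_\H$ (one pulls $T$ back along the grand orbit of the Fatou gaps as in the tuning construction described after Lemma \ref{l:bd}): its vertices are $1$-preperiodic or periodic and its edges are leaves of $\lam_\H$. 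Concretely I would argue that $T$ is a periodic gap of the lamination $\lam'$ obtained from $\lam_\H$ by the tuning/insertion construction, or simply that each edge of $T$ is a repelling leaf of $\lam_{c_n}=\lam_\H$ for large $n$ — the disjointness-from-critical-leaves hypothesis is exactly what guarantees that the corresponding periodic points are repelling (not eventually critical, not parabolic) for $c_n\in\H$.

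Finally I would invoke Lemma \ref{l:repel}. Apply part (2) with $f=p_c$ (wait — here one must be careful: $c\notin\M^{sy}_3$, so instead apply the argument on the dynamical side of $c$ directly): the dynamical rays of $p_c$ with arguments the vertices of $T$ have periodic or preperiodic arguments, and since their forward orbits avoid the critical leaves $\ol{(\ta\pm\frac13)}$ and $\ol{(\ta\pm\frac16)}$, none of these rays hits a (pre)critical point, so by Theorem \ref{t:gm} they are all smooth and land. To see they land at a common point, take $c_n\in\H$ with $c_n\to c_0$ and also a path within $\scp\sm\M^{sy}_3$ from such parameters out to $c$ along which the combinatorics is constant; along $\H$ the edges of $T$ are repelling leaves of $\lam_\H$ so by Lemma \ref{l:repel}(1) the corresponding rays of $p_{c_n}$ co-land, and by Theorem \ref{t:gm} this co-landing persists under the perturbation to $c$ (the landing points stay repelling and move continuously). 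Hence all dynamical external rays of $p_c$ at the vertices of $T$ land at one point. The main obstacle I expect is the bookkeeping in the middle step: verifying rigorously that ``orbit of $T$ disjoint from the two named critical leaves'' forces the vertices of $T$ to be non-(pre)critical and their landing points repelling for the nearby hyperbolic parameters — i.e., translating the purely combinatorial disjointness hypothesis into the dynamical statement needed to feed Theorem \ref{t:gm} and Lemma \ref{l:repel}.
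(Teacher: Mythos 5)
Your proposal takes a genuinely different route from the paper, and it has a real gap.

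The paper's argument is entirely dynamical, carried out in the plane of $p_c$ itself: take the external rays of $p_c$ whose arguments are the vertices of $T$, join them by an arbitrary continuum disjoint from the forward critical orbits, and pull this configuration back along the combinatorial orbit of $T$. Because all critical points of $p_c$ escape, $J(p_c)$ is a hyperbolic Cantor set; the pullbacks of the connecting continuum therefore shrink and converge into $J(p_c)$, and since $J(p_c)$ is totally disconnected the limiting continuum is a point. All rays of $T$ must land there. No parameter-space machinery, no hyperbolic components, no laminations of connected Julia sets.

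Your plan instead goes to the landing parameter $c_0$ of $\Rc_\ta$, moves into a hyperbolic component $\H$ with root $c_0$, tries to show the vertices of $T$ form a $\lam_\H$-class consisting of repelling leaves, and then transfers the co-landing back to $c$. The first obstruction is at the combinatorial step, not (as you suspect) in the ``bookkeeping'': the hypothesis that the orbit of $T$ avoids the two critical leaves does \emph{not} prevent $T$ from lying entirely inside the critical Fatou gap $U^+_\H$. In that case the edges of $T$ are not leaves of $\lam_\H$ at all (they belong only to a finer tuning $\lam'$), so for $c_n\in\H$ the rays at the vertices of $T$ land at \emph{distinct} points of the Jordan curve $\partial F^+_{c_n}$; there is no co-landing to transfer. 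The co-landing you want is precisely a feature of the escaped situation (Cantor Julia set) and is destroyed when the critical point is captured, so ``establish it for $c_n\in\H$ and perturb'' cannot work. A secondary issue: even if the co-landing did hold near $c_0$, Theorem \ref{t:gm} is a local stability statement and does not by itself propagate the landing pattern from a neighborhood of $c_0$ to an arbitrary point $c$ further out on the same parameter ray; one would need a holomorphic-motion/constancy-of-combinatorics argument along the ray, which you do not supply. The paper's direct pullback argument sidesteps all of this.
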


This statement is not new; it can be deduced, e.g., from a much more
 general Theorem 7.1 of \cite{dMM}, which gives a model for the landing pattern
 of all external rays of $p_c$ (see also \cite[Theorem 5.4]{Cal} for a restatement of this result in the
 language of invariant laminations).
For completeness, we give a sketch in our specific situation.

\begin{proof}[Sketch of a proof]
  Consider all external rays in the dynamical plane of $p_c$ whose arguments correspond to vertices of $T$.
All these rays land, perhaps at different points.
Let $\hat T$ be the union of these rays with some continuum so that $\hat T$ is connected and disjoint from
 the forward orbits of the critical points of $p_c$. Denote by $p$ the period of $T$.
For every $n=0,1,\dots$, let $\hat T_n$ be the $p_c^{pn}$-pullback of $\hat T$ that contains the same collection of external rays as $\hat T$ and
 the $\si_3^{pn}$-pullback of the connecting continuum along the orbit of $T$.

Since $p_c$ is hyperbolic, the sequence $\hat T_n$ converges to a connected set, comprising the original periodic rays and a continuum containing their landing points. However, all points of the limiting continuum must be inside of the Julia set, so this continuum has to be a singleton since the Julia set is totally disconnected.
\end{proof}

We can now prove a key theorem describing the limit transition of laminations in our situation.

\begin{theorem}\label{t:samelam}
If $c$ is parabolic then the following holds.

\begin{enumerate}

\item If $c=r_\H$ for a hyperbolic component $\H\ne \H_{main}$ then $\lam_c=\lam_\H$
and $\co^+_{r_\H}=\co^+_\H$.

\item If $c$ is the landing point of a parameter ray $\Rc_\ta$ and $\ta$ is 1-pre\-pe\-riodic
then $\ta$ is an endpoint of the marked comajor $\ol{\al\be}$ of $\lam_c$.

\end{enumerate}

\end{theorem}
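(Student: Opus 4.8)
The plan is to prove (1) and (2) more or less in parallel, since both boil down to the same principle: a parabolic parameter $c$ is approached (either through a hyperbolic component $\H$, or along a parameter ray $\Rc_\ta$) by polynomials all carrying a fixed lamination, and the limiting lamination $\lam_c$ must contain all the repelling leaves of that fixed lamination by Lemma \ref{l:repel}(2), while not being larger for combinatorial reasons. For (1): pick a sequence $c_n\in\H$ with $c_n\to r_\H$. All $c_n$ carry $\lam_\H$, which is a symmetric Fatou lamination of type B or D. By Theorem \ref{t:parattr}(1), $p_c$ is accessible from a hyperbolic component $\H''$ with $\lam_{\H''}=\lam_c$; I want to show $\H''=\H$. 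First apply Lemma \ref{l:repel}(2) along $c_n\to r_\H$: since $r_\H$ is parabolic of type B or D, no critical point of $p_{r_\H}$ lands on a repelling periodic point, so every repelling leaf of $\lam_{r_\H}=\lam_c$ belongs to $\lam_\H$. In particular the repelling periodic and preperiodic leaves of $\lam_c$ sit inside $\lam_\H$. Conversely, $\lam_\H$ is a Fatou lamination whose Fatou gaps correspond (Theorem \ref{t:gm}) to bounded Fatou domains of $p_{c_n}$ that become parabolic domains in the limit, so the periodic majors of $\lam_\H$ become parabolic leaves of $\lam_c$ by Lemma \ref{l:para1}, and all other edges of the Fatou gaps of $\lam_\H$ are iterated preimages of majors (Lemma \ref{l:bd}), hence they land on parabolic or preparabolic points, hence they are leaves of $\lam_c$ as well. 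This gives $\lam_\H\subset\lam_c$. For the reverse inclusion I use Lemma \ref{l:nogmajor}: $\lam_c$ cannot have a finite periodic gap inside a Fatou gap $U$ of $\lam_\H$ touching a major of $\lam_\H$; combined with the fact (from Theorem \ref{t:ccl} / the structure of $C_sCL$) that the only way to enlarge a Fatou lamination keeping it a symmetric lamination is by tuning inside the critical Fatou gaps, and that such tunings push the comajors strictly, one concludes $\lam_c=\lam_\H$. Finally $\co^+_{r_\H}=\co^+_\H$ follows from Theorem \ref{t:parattr}(2) applied to $c'=r_\H\in\partial\H$ with $\lam_{c'}=\lam_\H=\lam_{\H}$.

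For (2): let $c$ be the landing point of $\Rc_\ta$ with $\ta$ $1$-preperiodic. By Lemma \ref{l:landing1}, $c$ is parabolic on $\partial\M^{sy}_3$ and one of the rays $R_{\ta\pm\frac13}(c)$ lands on a parabolic periodic point $z_0$ of $p_c$; say $R_{3\ta}(c)$ lands on $z_0$, and $\ta$ itself is $1$-preperiodic with $\si_3(\ta)=3\ta$ periodic. I want to identify $\ta$ as an endpoint of the marked comajor $\ol{\al\be}$ of $\lam_c$. The key point is that for $c'\in\Rc_\ta$ (not in $\M^{sy}_3$) the cocritical point $-2c'$ lies on the dynamical ray $R_\ta(c')$, so the two rays $R_{\ta\pm\frac13}(c')$ land at the critical point $c'$; applying Lemma \ref{l:forcing} to $p_{c'}$, all the combinatorics forced by the critical cut $\{\ta+\frac13,\ta-\frac13\}$ and $\{\ta-\frac16,\ta+\frac16\}$ is rigid and passes to the limit $c$ via Lemma \ref{l:repel}. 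In the limit, $p_c$ being parabolic of type B or D, the leaf $\ol{(\ta-\frac13)(\ta+\frac13)}$ becomes (a sibling of) a major of $\lam_c$: indeed, its image $\ol{(3\ta)(3\ta)}$ is degenerate at the periodic argument $3\ta$, which by Lemma \ref{l:para1} is associated with the periodic major of the critical Fatou gap $F_c$ attached to $z_0$; so $\ol{(\ta-\frac13)(\ta+\frac13)}$ is the critical edge of the cocritical gap $\co(F_c)$, i.e.\ is exactly the marked comajor $\co^+_c=\ol{\al\be}$ of $\lam_c$, up to the marking convention that makes $\si_3(\co^+_c)=\si_3(M_c)$ with $3\ta$ periodic. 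Hence $\ta\in\{\al,\be\}$, as claimed.

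The main obstacle I anticipate is the ``reverse inclusion'' $\lam_c\subset\lam_\H$ in part (1): showing that the limiting lamination is not strictly bigger than $\lam_\H$. The containment $\lam_\H\subset\lam_c$ is the soft direction (continuity of repelling rays, Lemma \ref{t:gm}, Lemma \ref{l:repel}), but ruling out that $\lam_c$ acquires extra leaves requires knowing that $p_c$, being on the boundary of $\H$ and parabolic, has its critical points in the closure of the same Fatou domains $F^\pm$ that were critical for $\lam_\H$ — i.e.\ the critical Fatou gaps do not ``split.'' This is where Theorem \ref{t:parattr}(1) is essential: it provides the hyperbolic component $\H''$ with $\lam_{\H''}=\lam_c$, and then the task reduces to the combinatorial statement that two hyperbolic laminations one containing the other, with the larger one obtained without changing the short strips set, must be equal — which follows from Corollary \ref{c:major2} (distinct symmetric Fatou laminations have disjoint comajors) together with Lemma \ref{l:nogmajor}. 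I would spend the bulk of the write-up making this reduction precise, and the matching of markings ($\co^+$ rather than $\co^-$) at the very end, citing Theorem \ref{t:parattr}(2).
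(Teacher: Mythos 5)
Your proposal assembles the right toolkit (Lemmas \ref{l:repel}, \ref{l:para1}, \ref{l:nogmajor}, \ref{l:forcing}, Theorem \ref{t:gm}) and the correct general strategy of passing landing patterns to the limit, but both parts have genuine gaps precisely where the real work lies. In part (1) you never use the hypothesis that $c$ is the \emph{root} of $\H$, i.e.\ that the ray (half-)multiplier equals $1$; you only use that $c\in\partial\H$ is parabolic. But for non-root parabolic parameters on $\partial\H$ the conclusion is false: by Theorem \ref{t:paraland} such parameters satisfy $\lam_c\supsetneqq\lam_\H$. So any correct argument must invoke the multiplier condition, and the paper does so at the step you omit: Lemma \ref{l:repel} places the \emph{repelling} leaves of $\lam_c$ inside $\lam_\H$, but the \emph{parabolic} leaves of $\lam_c$ (which by Lemma \ref{l:para1} are exactly the majors and their pullbacks) could a priori lie strictly inside $U^+_\H$; if they did, the first (half-)return map $\eta$ of $U^+_\H$ would rotate the corresponding finite gap nontrivially, which is incompatible with ray (half-)multiplier $1$ at the root. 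Your proposed substitute --- that two nested symmetric Fatou laminations must coincide, via Corollary \ref{c:major2} and Lemma \ref{l:nogmajor} --- is false as stated: tunings produce strictly nested symmetric Fatou laminations whose comajors are disjoint (one under the other), so Corollary \ref{c:major2} does not exclude this.

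In part (2) you assert that the critical leaf joining $\ta-\frac13$ to $\ta+\frac13$ ``is exactly the marked comajor'' because its image is the periodic angle $3\ta$ associated with the periodic major. That assertion is the entire content of the claim. Lemmas \ref{l:landing1} and \ref{l:para1} only give that (say) $R_{\ta+\frac13}(c)$ lands on a parabolic point $z_c$ on which the two major rays $R_A(c)$, $R_B(c)$ also land; they do not exclude that $\ta+\frac13\notin\{A,B\}$ is a \emph{third} argument landing on $z_c$, so that $A$, $B$, $\ta+\frac13$ span a finite gap and $\ta\notin\{\al,\be\}$. Excluding this is where the paper's proof works hardest: it constructs the region $V^+_c$ bounded by repelling leaves, models its degree-two return map on $\si_2$, produces a $\si_2$-invariant gap disjoint from the projected critical leaf, lifts it to an $\eta$-invariant gap $T$ with vertices disjoint from those of $G$, and then combines Lemma \ref{l:forcing} along the parameter ray with Theorem \ref{t:gm} and Lemma \ref{l:repel} to show that the rays at the vertices of $T$ land on a common repelling point of $p_c$, contradicting $T\subset V^+_c$. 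None of this appears in your sketch, so part (2) is asserted rather than proved.
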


\begin{proof}
Consider cases corresponding to claims (1) and (2) from the theorem.

(1) 
By Lemma \ref{l:repel}, all repelling leaves of $\lam_c$ are contained in $\lam_\H$.
On the other hand, by Lemma \ref{l:para1} the only parabolic leaves of $\lam_c$ are the majors and their
iterated pullbacks.
If these parabolic leaves are not contained in $\lam_\H$, then the leaf or finite gap $G$  whose vertices correspond to the external rays of $p_c$ landing on the  parabolic point is in the interior of the
 marked Fatou
 gap $U^+_\H$ of $\lam_\H$.
Note that the first (half)return map $\eta$ of $U^+_\H$ (see Definition \ref{d:eta})
 gives a nontrivial combinatorial rotation of $G$.
This contradicts the definition of the root point, namely, that
 $r\hspace{-2pt}\rho_\H(c)=1$ for $\H$ of type D or $r\hspace{-2pt}\tro_\H(c)=1$ for $\H$ of type B.
Thus, $\lam_c\subset\lam_\H$.

Suppose that $\lam_c\subsetneqq \lam_\H$, and $\lam_\H$ has
a finite leaf or gap $G$ in the interior of $U^+_c$ where $U^+_c$ is the marked critical Fatou gap of $\lam_c$.
By Lemma \ref{l:nogmajor}, the set $G$ is disjoint from the majors of $\lam_c$.
By Lemma \ref{l:para1}, there is a repelling periodic point corresponding to $G$,
 a contradiction with Lemma \ref{l:repel}.

(2) Let $V^+_c$ be the closure of the maximal open subset of $\D$ containing $U^+_c$ and disjoint
from all repelling leaves of $\lam_c$. Since there are no fixed return triangles of $\si_3$ by
Lemma 4.4 of \cite{paper1}, the boundary of $V^+_c$ consists
of a Cantor subset of $\uc$ and countably many pairwise disjoint leaves of $\lam_c$.
Clearly, $U^+_c\subset V^+_c$. We claim that if $V^+_c\ne U^+_c$ then there exists
a gap $G\subset V^+_c$ such that all images of $U^+_c$ inside $V^+_c$ share an edge
with $G$. Indeed, suppose otherwise. Then it follows from \cite{Kiw00} that there are repelling
cutpoints of $J(p_c)$ such that the convex hulls of the arguments of dynamical external rays
of $p_c$ landing on them separate $V^+_c$, a contradiction with the definition of $V^+_c$.
This proves the existence of $G$ with the listed  properties. Moreover, it follows that the
edges of $G$ are the periodic leaves of $\lam_c$ inside $V^+_c$.

By Lemma \ref{l:repel}, the angles $\ta \pm \frac13$ are vertices of $V^+_c$.
By Lemma \ref{l:landing1}, one of the rays $R_{\ta \pm \frac13}(c)$ (say, it is
$R_{\ta+1/3}(c)$) lands on a parabolic periodic point $z_c$ of $p_c$. By way of contradiction,
suppose that $\ta\notin \{\al, \be\}$. Set $\ol{AB}$ to be the marked major of $\lam_c$; by assumption $\theta+\frac{1}{3} \notin \{A,B\}$. Then $U^+_c\ne V^+_c$ and we can consider the gap $G$
defined in the previous paragraph. Since at least two rays land on each parabolic point of $p_c$,
the dynamical external rays whose arguments are vertices of $G$ land on $z_c$. Thus, the three angles
$A, B$ and $\ta+\frac{1}{3}$ are vertices of $G$ and the dynamical external
rays of arguments $A, B$ and $\ta+\frac{1}{3}$ land on $z_c$.
Note that the major edge of $G$ coincides with the marked major $\ol{AB}$ of $\lam_c$.

Similarly to the construction of the degree two first (half-)return map $\eta$ (see Definition \ref{d:eta})
we can define a degree two map $\eta':V^+_c\to V^+_c$ semiconjugate to $\si_2$ by collapsing edges of $V^+_c$ to points.
Under this semiconjugacy $G$ maps to a finite $\si_2$-invariant gap (or leaf) $G'$. Moreover,
the critical leaf $\ol{\ta-\frac{1}{3} ,\ta+\frac{1}{3}}$ projects to a critical leaf $\ell$ in $\cdisk$ with a $\si_2$-periodic endpoint
which is a vertex of $G'$ but not an endpoint of the Thurston major of $G'$ (see \cite{Th}). Therefore there
exists a finite $\si_2$-invariant gap $T'$ disjoint from $\ell$. Then the lifting of $T'$ gives a finite $\eta$-invariant
gap $T\subset V^+_c$. Note that $G$ and $T$ have disjoint sets of vertices.

Now consider a point $c_t$ on the parameter ray $\Rc_\ta$,
 where the parameter $t$ corresponds to the value of the Green function for $\M_3^{sy}$ at $c_t$
 so that $c_t$ converges to $c$ as $t\to 0$.
Then the rays $R_{\ta\pm\frac{1}{3}}(c_t)$ both hit the marked critical point $c_t$.
By Lemma \ref{l:forcing}, the dynamical external rays of $p_{c_t}$ whose arguments
 correspond to the vertices of $T$ land on the same repelling periodic point $w_t$.
The point $w_t$ has a well-defined limit $w_0$ as $t\to 0$.
On the other hand, for every angle $\gamma$ corresponding to a vertex of $T$,
 the ray $R_\gamma(c)$ lands on a repelling periodic point of $p_c$.
 By Theorem \ref{t:gm} and by
 Lemma \ref{l:repel} this point is close to $w_t$ for small $t$, hence it must coincide with $w_0$.
Thus, $w_0$ is repelling for $p_c$, and the gap or leaf $T$ corresponding to $w_0$ must belong to $\lam_c$,
 a contradiction.
\end{proof}

Observe that, by Lemma \ref{l:landing1}, there is a dense set of 1-preperiodic angles such that the corresponding
parameter rays land on parabolic parameters in $\M^{sy}_3$.

\begin{theorem}\label{t:paraland}
Let $\ol{\al\be}$ be the marked comajor of a symmetric Fatou lamination $\lam$.
Then there exists a unique hyperbolic component $\H\ne \H_{main}$ such that
the parameter rays $\Rc_\al,$ $\Rc_\be$ land on the root point $r_\H$. Moreover,
let $c\in \partial \H, c\ne r_\H$ be a parabolic point. Then $\lam_c\supsetneqq \lam_\H$, and
the marked comajor of $\lam_c$ is located under the marked comajor of $\lam_\H$.
\end{theorem}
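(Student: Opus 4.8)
The plan is to deduce the landing statement from the combinatorial dictionary already established (Theorems \ref{t:center1}, \ref{t:parattr}, \ref{t:samelam}), and the ``moreover'' statement from a perturbation argument along $\ol{\H}$. Since $\lam$ is a symmetric Fatou lamination it is non-empty, so by Lemma \ref{l:pre1} its marked comajor $\ol{\al\be}$ is $1$-preperiodic, and by Theorem \ref{t:ccl} it is a leaf of $C_sCL$; recall also that a $1$-preperiodic angle is the endpoint of a \emph{unique} comajor of $C_sCL$. By Lemma \ref{l:landing1} the rays $\Rc_\al$ and $\Rc_\be$ land on parabolic parameters $c_\al,c_\be\in\partial\M^{sy}_3$. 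Applying Theorem \ref{t:samelam}(2) with $\ta=\al$ and with $\ta=\be$, the angle $\al$ is an endpoint of the marked comajor of $\lam_{c_\al}$ and $\be$ is an endpoint of the marked comajor of $\lam_{c_\be}$; each of these marked comajors is a leaf of $C_sCL$ through a $1$-preperiodic angle, hence each equals $\ol{\al\be}$. Thus $p_{c_\al}$ and $p_{c_\be}$ are marked parabolic polynomials with the same marked comajor, and the last claim of Theorem \ref{t:center1} yields $c_\al=c_\be=:c_0$. Now $p_{c_0}$ is parabolic, so by Theorem \ref{t:parattr} it is accessible from a hyperbolic component $\H$ with $\lam_\H=\lam_{c_0}$ (in particular $c_0\in\partial\H$), and parts (1)--(2) of that theorem give $c_0=r_\H$ and $\co^+_\H=\co^+_{c_0}=\ol{\al\be}$. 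Uniqueness of $\H$ follows from Theorem \ref{t:center1}: a parameter ray has at most one landing point, and distinct hyperbolic components have distinct marked comajors, hence distinct roots. Finally $\ol{\al\be}=\co^+_\H$ is non-degenerate, so $\H\ne\H_{main}$, and $\lam=\lam_\H$ by Corollary \ref{c:major2}.

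For the ``moreover'' statement, let $c\in\partial\H$, $c\ne r_\H$, be parabolic. Then $p_c$ is parabolic, so Theorem \ref{t:parattr} gives a unique hyperbolic component $\H'$ with $\lam_{\H'}=\lam_c$, $c=r_{\H'}$, and $\co^+_{\H'}=\co^+_c$; since $c\ne r_\H$ we have $\H'\ne\H$, whence $\co^+_{\H'}\ne\co^+_\H$ by the bijection of Theorem \ref{t:center1}, and thus $\lam_c=\lam_{\H'}\ne\lam_\H$. It therefore suffices to prove the inclusion $\lam_\H\subseteq\lam_c$: granting it, $\lam_\H\subsetneqq\lam_c$, so by the observation in Subsection \ref{ss:symlam} (if $\lam_\H\subset\lam_c$ then $\sh(\lam_c)\subset\sh(\lam_\H)$, hence the comajors of $\lam_c$ lie under the comajors of $\lam_\H$) the leaf $\co^+_c$ lies under $\co^+_\H=\ol{\al\be}$ or under $\co^-_\H=\tau(\ol{\al\be})$. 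To see it is the former, note that every edge of $U^+_\H$ is a leaf of $\lam_\H\subseteq\lam_c$, so $\lam_c$ subdivides $U^+_\H$; the marked critical point lies in one of the pieces, which is precisely the marked critical Fatou gap $U^+_c$, so $U^+_c\subseteq U^+_\H$. Since $\co^+_\H$ and $\co^+_c$ are the comajors sharing their $\si_3$-image with the periodic majors of $U^+_\H$ and $U^+_c$ respectively, a direct check of the marking conventions (the marked critical Fatou gaps being nested, with matching marked sides) places $\co^+_c$ under $\co^+_\H$.

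It remains to prove $\lam_\H\subseteq\lam_c$, and this is the main obstacle. Choose $c_n\in\H$ with $c_n\to c$, so $\lam_{c_n}=\lam_\H$ for every $n$. Each leaf of $\lam_\H$ is a $p_{c_n}$-repelling leaf: the bounded Fatou domains of $p_{c_n}$ are attracting, so the periodic points on their boundaries are repelling, and by Lemma \ref{l:bd} every edge of a Fatou gap of $\lam_\H$ eventually maps to the periodic major orbit. Because $c\in\M^{sy}_3$ and the critical points of $p_c$ are attracted to its parabolic cycle, none of the dynamical rays of $p_c$ occurring here is (pre)critical; hence, for every leaf $\ell$ of $\lam_\H$ whose associated $p_{c_n}$-periodic point does not converge to a parabolic point of $p_c$, Theorem \ref{t:gm} together with Lemma \ref{l:repel}(1) shows that the two rays of $p_c$ at the endpoint-arguments of $\ell$ co-land, so $\ell\in\lam_c$. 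This accounts for all leaves of $\lam_\H$ outside the grand orbit of the periodic major $M_\H$ (and, in type D, of $\tau(M_\H)$), i.e. outside the grand orbit of the cycle that collides with a new cycle at $c=r_{\H'}$ and becomes parabolic. The remaining, delicate, case is $M_\H$ and its iterated pullbacks: as $c_n\to c$ the repelling point carrying $M_\H$ tends to that parabolic point, and one must verify that the two rays at the endpoints of $M_\H$ still land at a common point (equivalently, that the Fatou gap $U^+_\H$ pinches at $M_\H$ rather than the rays spreading apart). I expect this to be the principal difficulty and plan to settle it by a limiting argument in the spirit of the proofs of Lemma \ref{l:forcing} and Theorem \ref{t:samelam}(2): the set $\lam_c=\lam_{\H'}$ is already known to be a genuine (closed, non-crossing) lamination containing all the nearby non-major leaves produced above, and its closure is then forced to contain $M_\H$ together with its pullbacks. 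Once $\lam_\H\subseteq\lam_c$ is in hand, the ``moreover'' statement follows as in the previous paragraph.
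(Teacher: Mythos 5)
Your first paragraph follows the paper's argument almost verbatim (Lemma \ref{l:landing1} $\to$ Theorem \ref{t:samelam}(2) $\to$ disjointness of $1$-preperiodic comajors $\to$ Theorem \ref{t:parattr} $\to$ uniqueness via Theorem \ref{t:center1}), and it is correct. The problem is in the ``moreover'' part, where you correctly isolate the inclusion $\lam_\H\subseteq\lam_c$ as the crux but then leave its essential case unproved: you write that the fate of $M_\H$ and its pullbacks is ``the principal difficulty'' and offer only a plan (``I expect\dots and plan to settle it by a limiting argument''). As submitted, that is a genuine gap, not a proof.

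More importantly, the difficulty you flag rests on a misreading of the geometry at $c\ne r_\H$. You assert that as $c_n\to c$ the repelling point carrying $M_\H$ tends to the parabolic point of $p_c$. That happens only at the root: at $r_\H$ the ray multiplier based on $\H$ equals $1$, i.e.\ the attracting cycle collides with the period-$n$ repelling cycle whose rays form $M_\H$. For a parabolic $c\in\partial\H$ with $c\ne r_\H$, the ray multiplier is a root of unity different from $1$, so the parabolic cycle of $p_c$ absorbs a repelling cycle of strictly larger ray period; the period-$n$ cycle carrying $M_\H$ converges to a cycle of $p_c$ of the same period, which must be repelling (it cannot be non-repelling by Corollary \ref{c:multiplier}, since the parabolic cycle already exhausts the quota). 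This is exactly the one-line observation the paper's proof is built on: since that cycle stays repelling, Theorem \ref{t:gm} (equivalently Lemma \ref{l:repel}) applies to $M_\H$ and all edges of $U^\pm_\H$ uniformly, every edge of $U^\pm_\H$ persists as a leaf of $\lam_c$, hence the critical sets of $\lam_c$ sit inside $U^\pm_\H$ and $\lam_c\supset\lam_\H$. No delicate pinching/limiting argument for the major is needed. Once you insert this observation, your remaining deductions (strictness via Theorem \ref{t:center1}, and the nesting of comajors via the monotonicity of $\sh$) go through and match the paper.
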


\begin{proof}
Consider the marked comajor $\ol{\al\be}$ of a symmetric Fatou lamination $\lam$.
Then $\al$ and $\be$ are 1-preperiodic.
By Lemma \ref{l:landing1}, the parameter ray $\Rc_\al$ lands on a parabolic parameter $c$.
The angle $\al$ is an endpoint of the marked comajor $\co^+_c$ of the lamination $\lam_c$,
 by Theorem \ref{t:samelam}.
Since distinct 1-preperiodic comajors are
disjoint, $\co^+_c=\ol{\al\be}$.
By Theorem \ref{t:parattr}, there exists a hyperbolic component $\H$ such that $c=r_\H$ is the
root of $\H$.
By Theorem \ref{t:samelam}, the lamination $\lam_\H$ coincides with $\lam_c$.
By Theorem \ref{t:center1}, the set $\H$ is a unique hyperbolic component such that
$\co^+_\H=\ol{\al\be}$. It follows that $\Rc_\be$ lands on $c$, too.

To prove the last claim of the theorem, let
$c\in \partial \H, c\ne r_\H$, be a parabolic parameter.
Since $c\ne r_\H$, the repelling periodic points of polynomials $p_{c^*}\in \H$
associated with the marked major $M_\H$ of $\lam_\H$ converge to a repelling periodic point of $p_c$
 of the same period as $c^*\to c$.
Hence all the edges of the gaps $U^\pm_\H$ of $\lam_\H=\lam_{c^*}$ remain edges of $\lam_c$.
This implies that the critical gaps of $\lam_c$ are contained in $U^\pm_\H$,
and, hence, $\lam_c\supset \lam_\H$, and the comajors of $\lam_c$ are located under those of $\lam_\H$
(this yields the claim of the theorem
about the marked comajors).
The hyperbolic component with the same marked comajor as $\lam_c$
 cannot coincide with $\H$, since $c$ is not a root point of $\H$.
Therefore,
$\lam_c\ne \lam_\H$, which completes the proof.
\end{proof}

Call the parameter rays from Theorem \ref{t:paraland} \emph{characteristic rays of a hyperbolic component $\H\ne \H_{main}$}.
Recall that by an arc $(a, b)\subset \uc$ we always mean the \emph{positively oriented circle arc}
with endpoints $a, b\in \uc$.

\begin{lemma} \label{c:only2} The characteristic rays are the only two strictly preperiodic
rays that accumulate on a parabolic parameter $c$.
\end{lemma}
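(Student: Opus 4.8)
I would prove Lemma \ref{c:only2} by arguing that any strictly preperiodic parameter ray $\Rc_\ta$ accumulating on a parabolic parameter $c\in\partial\H$ must, by Theorem \ref{t:samelam}(2) and Lemma \ref{l:landing1}, have $\ta$ as an endpoint of the marked comajor of the lamination $\lam_c$, and then show that this forces $c=r_\H$ and $\ta\in\{\al,\be\}$. So first I would fix $c$ and observe that it is parabolic of type B or D, hence (by the description before this lemma and by Theorem \ref{t:paraland}) $\lam_c$ has a well-defined pair of comajors $\{\co^+_c,\tau(\co^+_c)\}$, where $\co^+_c=\ol{\al\be}$; by Theorem \ref{t:paraland} either $c=r_\H$ (in which case $\lam_c=\lam_\H$ and $\co^+_c=\co^+_\H$) or $\lam_c\supsetneqq\lam_\H$ with $\co^+_c$ strictly under $\co^+_\H$, and in the latter case $c$ is still the root of some \emph{other} hyperbolic component $\H'$ with $\co^+_{\H'}=\co^+_c$.

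Next I would treat the accumulating ray $\Rc_\ta$. If $\ta$ is strictly preperiodic, then by Lemma \ref{l:landing1} the ray $\Rc_\ta$ actually \emph{lands} on a parabolic parameter, and that parameter is $c$ since $c$ lies in the accumulation set (a ray's accumulation set is connected and, by the argument in Lemma \ref{l:landing1}, reduces to a single parabolic parameter). Then Theorem \ref{t:samelam}(2) says $\ta$ is an endpoint of the marked comajor $\ol{\al\be}$ of $\lam_c$. Now the symmetry enters: because $\lam_c$ is symmetric, $\tau(\ol{\al\be})$ is also a comajor (with the \emph{other} major marked), and the two associated marked polynomials are $p_c$ read with the two choices of marked critical point; correspondingly there are exactly two strictly preperiodic angles that can arise this way, namely the endpoints of $\ol{\al\be}$ — I should note that the endpoints of $\tau(\ol{\al\be})$ are $\al+\tfrac12,\be+\tfrac12$, and that the comajor $\ol{\al\be}$ has length $<\tfrac12$ so these four angles are distinct unless $\ol{\al\be}$ is degenerate (excluded, since a parabolic parameter has a nondegenerate comajor). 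This gives at most two candidate angles. Finally I would invoke Theorem \ref{t:paraland} together with Theorem \ref{t:center1}: the component $\H'$ with $\co^+_{\H'}=\co^+_c$ is \emph{unique}, so $c=r_{\H'}$, and the two rays $\Rc_\al,\Rc_\be$ (the characteristic rays of $\H'$) are precisely the two that land on $c$; since $c\in\partial\H$ was arbitrary, any strictly preperiodic ray accumulating on $c$ is one of these two, establishing the claim (with the understanding, from Theorem \ref{t:paraland}, that if $c\ne r_\H$ then the relevant component is $\H'\ne\H$, but the statement is only about which rays accumulate, so this causes no problem).

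**Main obstacle.** The delicate point is ruling out a preperiodic ray that \emph{accumulates} on $c$ without \emph{landing} on it, or that lands on $c$ from the ``wrong side'' combinatorially — i.e., making sure Lemma \ref{l:landing1} and Theorem \ref{t:samelam}(2) really pin down the angle to be an endpoint of $\co^+_c$ rather than merely some preperiodic angle whose dynamical ray hits a parabolic point. I expect the cleanest route is: a preperiodic parameter ray that accumulates on $c$ must land (by Lemma \ref{l:landing1} applied to the one-preperiodic case, or by the connectedness-of-accumulation-set argument for higher preperiod, reducing to the one-preperiodic case by iterating the combinatorics), and the one-preperiodic case is exactly Theorem \ref{t:samelam}(2). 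Care is needed for strictly preperiodic angles of preperiod $\ge 2$: one must check that such a ray cannot accumulate on a parabolic parameter at all, or if it can, that the argument still forces it into the comajor — I would handle this by noting that for the landing to occur at a parabolic point, the dynamical counterpart forces a periodic ray into the picture after one more pullback, which reduces to the preperiod-one analysis already carried out. I would present this reduction carefully since it is where a gap could hide.
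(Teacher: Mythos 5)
Your treatment of the $1$-preperiodic case is sound and close in spirit to what the paper needs (Lemma \ref{l:landing1} gives landing, Theorem \ref{t:samelam}(2) pins the angle to an endpoint of $\co^+_c$, and Theorems \ref{t:parattr} and \ref{t:center1} identify $c$ as the root of the unique component with that marked comajor). But the case you flag yourself as delicate --- strictly preperiodic angles of preperiod $\ge 2$ --- is a genuine gap, and the fix you sketch does not work. There is no dynamics on the parameter plane, so a preperiod-$k$ parameter ray cannot be ``iterated'' to a preperiod-one one; and the dynamical-plane reduction fails for a structural reason: the preperiod-one argument of Lemma \ref{l:landing1} hinges on the periodic image ray $R_{3\ta}(c)$ being non-smooth for $c\in\Rc_\ta$ (because the periodic cycle of rays passes through the critical value, hence returns through a ray crashing on the critical point), which forces the landing point to be parabolic. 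For $k\ge 2$ the periodic ray $R_{3^k\ta}(c)$ need not be non-smooth, the landing point at an accumulation parameter is in fact repelling, and the accumulation parameter is Misiurewicz, not parabolic. That statement is Theorem \ref{t:mis} in the paper --- which is proved \emph{after} Lemma \ref{c:only2} and explicitly uses it (to conclude that $z_0$ is repelling), so importing it here would be circular.

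The paper closes this gap by an entirely different, global argument: since $1$-preperiodic comajors are dense in $C_sCL$ and each gives rise to a genuine cut in the parameter plane (two parameter rays landing at a common parabolic root, by Theorem \ref{t:paraland}), any angle $\ga$ separated from $\ol{\al\be}$ in $\cdisk$ by some comajor has $\Rc_\ga$ separated from $c$ by the corresponding parameter cut, hence cannot accumulate on $c$ --- regardless of the preperiod of $\ga$, or indeed of whether $\ga$ is preperiodic at all. The only surviving candidates are vertices of the infinite gap of $C_sCL$ with edge $\ol{\al\be}$, and Theorem \ref{t:cardio} shows a preperiodic vertex of such a gap is automatically $1$-preperiodic, reducing everything to the case you did handle. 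If you want to keep your route, you must supply an independent argument excluding accumulation of preperiod-$\ge 2$ rays on parabolic parameters (e.g.\ a stability argument via Theorem \ref{t:gm} for the finite forward orbit of $R_\ta$), or adopt the separation argument; as written, the proposal proves only the $1$-preperiodic half of the lemma.
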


\begin{proof}
By Theorem \ref{t:parattr}, the point $c$ is the landing point of the parameter rays $\Rc(\al)$ and $\Rc(\be)$
where $\ol{\al\be}$  is a 1-preperiodic comajor.
By Theorem \ref{t:paraland}, all 1-preperiodic comajors give rise to true cuts in the parameter plane.
Hence, if a comajor separates $\ol{\al\be}$ from an angle $\ga$ in $\cdisk$, then $\Rc_\ga$ cannot accumulate on $c$.
Since, by
Theorem \ref{t:ccl}, the 1-preperiodic comajors are dense in $C_sCL$ and disjoint from all other comajors,  it follows that the only way
a parameter ray $\Rc_\ga$ can accumulate on $c$ is when $\ga$ is a vertex of an infinite gap $G$ with an edge $\ol{\al\be}$.
However, by Theorem \ref{t:cardio},  the fact that $\ga$ is preperiodic implies that $\ga$ is actually 1-preperiodic.
Again by Theorem \ref{t:parattr}, this implies that $\Rc_\ga$ cannot land on $c$, as desired.
\end{proof}


\begin{theorem}\label{t:mainpoly}
Each parabolic parameter $c\in \partial \H_{main}$ is associated to a comajor $\ol{\al\be}$
which is an edge of $G_{main}$ and, accordingly, to a point of $\cdisk/C_sCL$. The parameter rays $\Rc_\al$ and
$\Rc_\be$ land on $c$. For every hyperbolic domain $\H\ne \H_{main}$ the corresponding marked comajor $\co_\H^+=\ol{\ta_1\ta_2}$
is associated to the parameter rays $\Rc_{\ta_1}$ and $\Rc_{\ta_2}$ that land on the root
$r_\H$ of $\H$ and separate $\H$ from $\H_{main}$.
\end{theorem}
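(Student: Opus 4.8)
The plan is to establish the two assertions separately; in both, the landing of the relevant parameter rays comes from Theorem~\ref{t:paraland}, and for the second assertion the word ``separate'' is obtained by a planar separation argument exploiting the symmetry $c\mapsto-c$.

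\emph{Parabolic parameters on $\partial\H_{main}$.} Here $\partial\H_{main}=\{|c|=\sqrt{1/3}\}$, so $0$ is a neutral fixed point of $p_c$ (multiplier $-3c^2$, of modulus $1$); if $p_c$ is parabolic then by Corollary~\ref{c:multiplier} its only non-repelling cycle is the symmetric cycle $\{0\}$, so $0$ is the parabolic point and $-3c^2$ is a root of unity. Such $p_c$ is geometrically finite, so $K(p_c)$ is locally connected and $\lam_c$ is defined, and the $\sim_c$-class of $0$ is a $\si_3$-invariant, $\tau$-symmetric finite rotational gap $H'$ (the rays landing on $0$ are combinatorially rotated by $\si_3$; $\tau$-symmetry follows from $B_c(-z)=-B_c(z)$; $H'$ degenerates to the diameter $\ol{0\,\tfrac12}$ precisely when $-3c^2=1$, i.e.\ $c=\pm i/\sqrt3$). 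By Theorem~\ref{t:parattr} there is a hyperbolic component $\widehat\H$ with $c=r_{\widehat\H}$ and $\lam_{\widehat\H}=\lam_c$; since $\lam_c\ne\0$ we have $\widehat\H\ne\H_{main}$, so by Theorem~\ref{t:center1} $\co^+_c=\co^+_{\widehat\H}$ is a $1$-preperiodic comajor, and because $\lam_c$ carries a $\si_3$-invariant symmetric rotational gap, Theorem~\ref{t:maingap} identifies $\co^+_c=\ol{\al\be}$ with an edge of $G_{main}$, hence with the point $\pr_{comb}(\al)=\pr_{comb}(\be)$ of $\cdisk/C_sCL$. Finally Theorem~\ref{t:paraland} applied to $\lam=\lam_c$ shows that $\Rc_\al$ and $\Rc_\be$ land on the root of the unique hyperbolic component carrying this marked comajor, i.e.\ on $r_{\widehat\H}=c$.

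\emph{Hyperbolic components $\H\ne\H_{main}$.} That $\Rc_{\ta_1},\Rc_{\ta_2}$ land on $r_\H$ is Theorem~\ref{t:paraland} applied to $\lam=\lam_\H$. It remains to show that the Jordan curve $\Gamma_\H\subset\Cbar$ made of these two rays, their common landing point $r_\H$, and $\infty$, separates $\H$ from $\H_{main}$. Since the rays avoid $\M^{sy}_3$ and $r_\H\in\partial\M^{sy}_3$, both of the open sets $\H,\H_{main}$ are disjoint from $\Gamma_\H$, hence each lies in a single complementary Jordan domain; let $W_\H$ be the domain cut off by the short arc $(\ta_1,\ta_2)$ (short because $|\ol{\ta_1\ta_2}|\le\tfrac16$). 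To place $\H$ in $W_\H$: the boundary $\partial\H$ contains a parabolic parameter $c'\ne r_\H$ (by Theorems~\ref{t:preperiodiccomponents} and~\ref{t:preperiodiccomponents2} the extended ray (half-)multiplier is a homeomorphism $\ol\H\to\ol\D$ taking root-of-unity values $\ne1$); by the last part of Theorem~\ref{t:paraland} the marked comajor $\co^+_{c'}$ lies strictly under $\ol{\ta_1\ta_2}$, and by Theorem~\ref{t:parattr} $c'$ is the root of its hyperbolic component, so Theorem~\ref{t:paraland} shows the two parameter rays landing on $c'$ have angles in the open arc $(\ta_1,\ta_2)$; thus $c'\in\ol{W_\H}\sm\Gamma_\H=W_\H$, and since $\H$ is connected, meets $W_\H$, and avoids $\Gamma_\H$, we get $\H\subset W_\H$. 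To place $\H_{main}$ in the other domain: $B_c(-z)=-B_c(z)$ yields $\Psi(-c)=-\Psi(c)$, so $c\mapsto-c$ carries $\Rc_\ta$ to $\Rc_{\ta+\frac12}$; hence $-\Gamma_\H$ is a Jordan curve disjoint from $\Gamma_\H$ whose two rays are at angles $\ta_1+\tfrac12,\ta_2+\tfrac12$ in the arc $(\ta_2,\ta_1)$ complementary to $(\ta_1,\ta_2)$, so the domain $-W_\H$ it cuts off by the short arc $(\ta_1+\tfrac12,\ta_2+\tfrac12)$ is disjoint from $W_\H$. Since $\H_{main}=\D_{\sqrt{1/3}}=-\H_{main}$, the inclusion $\H_{main}\subset W_\H$ would force $\H_{main}=-\H_{main}\subset-W_\H$, hence $\H_{main}\subset W_\H\cap(-W_\H)=\0$, a contradiction; so $\H_{main}$ lies in the domain complementary to $W_\H\supset\H$, which is the required separation.

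\emph{Anticipated main difficulty.} The heart of the matter is identifying which side of $\Gamma_\H$ contains $\H$, i.e.\ the step $\H\subset W_\H$; everything else is bookkeeping with the cited theorems. The argument above routes this through a parabolic boundary point $c'\ne r_\H$ of $\H$; alternatively one may use the standard dynamical description of the wake $W_\H$ as the set of parameters for which the dynamical rays at the endpoints of the major $M_\H$ co-land at a repelling periodic point (cf.\ \cite{dMM,Cal}), which contains $\H$ by construction. A secondary point to watch is that Theorem~\ref{t:maingap} (with its proof) really does characterize the edges of $G_{main}$ as exactly the comajors of symmetric Fatou laminations carrying a $\si_3$-invariant rotational gap.
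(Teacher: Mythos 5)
Your proof is essentially correct and follows the same strategy as the paper for the first assertion (identify the lamination of the parabolic polynomial with $0$ parabolic, read off its marked comajor, and invoke the parameter-ray landing results), while the paper simply says of the separation claim that it ``easily follows''; so your detailed argument for the second assertion is a genuine supplement.

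A few points worth flagging. For the first assertion, the paper avoids the implicit converse of Theorem~\ref{t:maingap} by constructing the lamination $\lam$ directly from the invariant gap $T$ over $0$ (``there is a unique symmetric lamination associated with $T$\ldots'') and then observing that such laminations account for exactly the edges of $G_{main}$; your route through Theorem~\ref{t:parattr} to a hyperbolic component $\widehat\H$ and then back to Theorem~\ref{t:maingap} is fine, but you are right to flag that Theorem~\ref{t:maingap} as stated describes edges of $G_{main}$ rather than characterizing them, so a one-line justification of the converse (e.g.\ two symmetric Fatou laminations each with a $\si_3$-invariant rotational gap through $\Oc$ with nested comajors would force the gaps to coincide, hence the laminations and comajors to coincide) would tighten the argument. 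For the separation step, one small inaccuracy: $\Gamma_\H$ and $-\Gamma_\H$ are \emph{not} disjoint --- both pass through $\infty$, so their union is a wedge of two circles in $\Cbar$, not two disjoint Jordan curves. This does not damage the conclusion: since $\Gamma_\H\sm\{\infty\}$ and $-\Gamma_\H\sm\{\infty\}$ are disjoint connected sets and each lies on the ``long-arc'' side of the other near $\infty$, the Jordan domains $W_\H$ and $-W_\H$ are indeed disjoint, and the rest of your symmetry argument ($\H_{main}=-\H_{main}$ forcing $\H_{main}\subset W_\H\cap(-W_\H)=\0$) goes through. Finally, your existence of a parabolic $c'\ne r_\H$ on $\partial\H$ via root-of-unity values of the extended (half-)multiplier is the standard continuation argument and is legitimate; it is worth remembering, though, that Theorems~\ref{t:preperiodiccomponents}--\ref{t:preperiodiccomponents2} only give that the \emph{extended} function takes these values, so one needs the (routine) observation that at such boundary points the persisting periodic cycle has that same multiplier and is therefore parabolic.
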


\begin{proof}
Let $c\in \partial \H_{main}$ be a parabolic parameter. Then $0$ is a parabolic point associated to
a finite invariant gap $T$ whose vertices are the arguments of dynamical external rays of $p_c$ landing on $0$.
It follows that there is a unique symmetric lamination $\lam$ associated with $T$ which has the gap $T$,
Fatou gaps of degree greater than 1 attached to $T$ and ``rotating'' around $T$, and pullbacks of all these gaps
(this fully describes $\lam$). Let $\ol{\al\be}$ be the marked comajor of $\lam$ associated with the marked
cocritical point $-2c$ of $p_c$;
 by Theorem \ref{t:parattr}, the parameter rays $\Rc_\al$ and $\Rc_\be$ land on $c$.
Thus, all parabolic parameters $c\in \partial \H_{main}$ correspond to edges of the main gap $G_{main}$ of $C_sCL$ and in the end
map to the corresponding points of $\Dbar / C_sCL$ that belong to the main domain $D_{main}$ of $\Dbar / C_sCL$. The rest
of the theorem easily follows.
\end{proof}

\section{Misiurewicz parameters}

A number $c\in \C$ is a \emph{Misiurewicz parameter} if the $p_c$-orbits of critical values are
strictly preperiodic. If $c$ is a Misiurewicz parameter, then $K_c$ is connected (i.e., $c\in \M^{sy}_3$), all $p_c$-periodic points
are repelling (see Theorem \ref{t:43}), and $J(p_c)$ is a dendrite (in particular, $J(p_c)$ is locally connected).
Recall that a cubic symmetric lamination $\lam$ is called a \emph{Misiurewicz} lamination if its critical
sets are strictly preperiodic. Such laminations and their comajors are discussed in detail right after
Lemma \ref{l:pre1}.
By Theorem \ref{t:ccl}(2), Misiurewicz cocritical leaves (gaps) are leaves (gaps) of $C_sCL$
approached from all sides by 1-preperiodic comajors.
Since for each polynomial from $\scp$ a critical point is marked, then 
each Misiurewicz lamination is considered twice, with either cocritical set marked. Finally,
recall that the lamination $C_sCL$ defines a laminational equivalence relation $\sim_{sy}$.
For brevity by ``$\sim_{sy}$-class'' we will mean 
 an equivalence class of $\sim_{sy}$.

\begin{lemma}\label{l:dendr}
Let $p_c$ be a cubic symmetric polynomial with a dendritic Julia set, and
 $T$ be the marked critical set of $\lam_c$.
If $c_n\in\scp\sm\M_3^{sy}$ converge to $c$ as $n\to\infty$, then
 the arguments of the external rays of $p_{c_n}$ hitting the marked critical point $c_n$
converge to vertices of $T$. 
\end{lemma}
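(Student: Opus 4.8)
The plan is to combine the fact that $p_c$ has a dendritic (hence locally connected) Julia set on which every periodic point is repelling with the stability of repelling cuts under perturbation (Theorem \ref{t:gm}), together with the fact that for $c_n\notin\M_3^{sy}$ the marked critical point $c_n$ is hit by a pair of dynamical external rays whose arguments are of the form $\ta_n\pm\frac13$ for the parameter-ray angle $\ta_n$, and $-2c_n$ lies on $R_{\ta_n}(c_n)$. First I would record that the set of arguments hitting $c_n$ is exactly $\{\ta_n-\frac13,\ta_n+\frac13\}$, so the claimed convergence is really a statement about where accumulation points of these angle pairs can lie; by passing to a subsequence we may assume $\ta_n\pm\frac13\to\xi^\pm$ for some $\xi^\pm\in\uc$ with $\si_3(\xi^+)=\si_3(\xi^-)$, i.e.\ $\ol{\xi^-\xi^+}$ is a (possibly degenerate) critical chord, and it suffices to show $\xi^\pm\in\V(T)$.

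The heart of the argument is a forcing/continuity step in the spirit of the proof of Theorem \ref{t:samelam}(2). Suppose for contradiction that $\xi^+$ (say) is \emph{not} a vertex of $T$. Since $T$ is the marked critical set of $\lam_c$ and $\lam_c$ is a Misiurewicz-type lamination (its critical sets are strictly preperiodic), $\xi^+$ then lies strictly inside one of the complementary arcs of $\V(T)$, or inside a gap/leaf of $\lam_c$ on one side of $T$; in any case one can choose a $\si_3$-periodic polygon $S$ whose vertices are landing angles of repelling periodic external rays of $p_c$, whose forward orbit is disjoint from the two critical chords $\ol{(\xi^-)(\xi^+)}$ and its sibling $\ol{(\xi^--\frac16)(\xi^-+\frac16)}$ built from $\xi^\pm$ — this is exactly the separation hypothesis needed to invoke Lemma \ref{l:forcing}. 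Because the $\xi^\pm$ are limits of the $\ta_n\pm\frac13$, for large $n$ the critical leaves of $p_{c_n}$ also avoid the forward orbit of $S$, so by Lemma \ref{l:forcing} all dynamical external rays of $p_{c_n}$ with arguments the vertices of $S$ land at a common repelling periodic point $w_n$ of $p_{c_n}$. The points $w_n$ have a limit $w_0$ as $n\to\infty$. On the other hand, since $J(p_c)$ is locally connected and every periodic point of $p_c$ is repelling, the rays of $p_c$ with those same arguments land at a common repelling periodic point of $p_c$; by Theorem \ref{t:gm} (and Lemma \ref{l:repel}) this point is close to $w_n$ for large $n$, hence equals $w_0$, so $w_0$ is repelling for $p_c$ and the polygon $S$ is a leaf/gap of $\lam_c$. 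But by construction $S$ was chosen on the ``far'' side of $T$ from $\ol{(\xi^-)(\xi^+)}$ in a way incompatible with $T$ being the critical set that the limiting critical chord collapses — more precisely, the critical leaf $\ol{(\xi^-)(\xi^+)}$ would have to cross $S$ or separate $S$ from $T$, contradicting that $S$ and $T$ are both leaves/gaps of the lamination $\lam_c$ and that no non-degenerate critical chord of $\lam_c$ is disjoint from $T$ on the side containing $S$. This contradiction forces $\xi^+\in\V(T)$, and symmetrically $\xi^-\in\V(T)$.

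The main obstacle I expect is the bookkeeping in the previous paragraph: choosing the periodic polygon $S$ correctly so that (i) its forward orbit genuinely avoids both relevant critical leaves of $p_c$, (ii) its vertices are all repelling landing angles of $p_c$, and (iii) its position relative to $T$ yields an honest contradiction when it is shown to survive in $\lam_c$. One clean way to organize (iii): since $J(p_c)$ is a dendrite, $\lam_c$ has no Fatou gaps, so the critical set $T$ is a finite gap or a critical leaf and it is the \emph{unique} critical set of $\lam_c$ on ``its side''; if $\xi^+\notin\V(T)$ then the critical leaf $\ell_\infty=\ol{(\xi^-)(\xi^+)}$ is a non-degenerate chord disjoint from $T$ (it cannot cross $T$, being a limit of critical chords of the $p_{c_n}$ which, by Lemma \ref{l:repel} applied along the sequence, cannot cross repelling leaves of $\lam_c$), and then the region cut off by $\ell_\infty$ not containing $T$ must contain a repelling leaf of $\lam_c$ — but such a leaf, pulled back through the argument above, would have to be separated from $T$ by the critical leaf $\ell_\infty$, which is impossible for two elements of a single invariant lamination once one uses that $\si_3$ is injective off $\ell_\infty$ and $T$ is the full critical set. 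Once this topological picture is pinned down the rest is routine, and I would present it in that order: identify the angle set hitting $c_n$; pass to a subsequential limit critical chord; build $S$ and apply Lemma \ref{l:forcing}; take the limit of the $w_n$ and apply Theorem \ref{t:gm}/Lemma \ref{l:repel}; derive the contradiction with the uniqueness of the critical set $T$.
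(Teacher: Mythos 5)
Your overall strategy (pass to a subsequential limit of the critical chords $\ol{(\ta_n-\frac13)(\ta_n+\frac13)}$, show the limit chord cannot cross repelling leaves of $\lam_c$, and conclude it must sit inside a critical set) is reasonable and close in spirit to the paper's argument, which also rests on Theorem \ref{t:gm} and on the approximation of $\lam_c$ by (pre)periodic leaves. But there is a genuine gap at the point where you try to close the contradiction. First, the claim in your last paragraph that a repelling leaf ``separated from $T$ by the critical leaf $\ell_\infty$'' is impossible is false as stated: since $J(p_c)$ is a dendrite, $\lam_c$ has \emph{two} critical sets, $T$ and $\tau(T)$, and every leaf lying behind a critical chord contained in $\tau(T)$ is separated from $T$ by that chord. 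What the non-crossing argument actually yields is only that $\ell_\infty$ lies in the closure of a single gap whose boundary map is non-injective, i.e.\ $\xi^\pm\in\V(T)\cup\V(\tau(T))$. Second, and this is the essential missing step, nothing in your argument distinguishes the marked critical set $T$ from $\tau(T)$: the lamination $\lam_c$ is $\tau$-invariant, and the rays crashing at the unmarked critical point $-c_n$ produce exactly the $\tau$-image of your chord, so a purely laminational/chord-linking argument cannot tell which critical gap the limit lands in. To pin down $T$ one must track the \emph{planar} position of the marked critical point: surround $T$ by finitely many (pre)periodic leaves approximating its edges from outside, note that by Theorem \ref{t:gm} the corresponding cuts persist for $c^*$ near $c$, and observe that the marked critical point $c^*$ stays in the complementary region corresponding to $T$, so the rays crashing at $c^*$ have arguments trapped in the small circle arcs of that region. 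That trapping argument is precisely the paper's (shorter) proof; it also handles at once all rays hitting $c_n$, not just the two principal ones.

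Two smaller points. The middle paragraph with the periodic polygon $S$ and Lemma \ref{l:forcing} does not actually produce a contradiction: showing that $S$ survives as a gap of $\lam_c$ contradicts nothing, since a periodic gap whose orbit avoids the two critical chords is perfectly admissible; you seem to sense this, and the detour can be dropped. Also, the second critical chord you write, $\ol{(\xi^--\frac16)(\xi^-+\frac16)}$, is not the sibling critical leaf associated with $-c_n$: in terms of the parameter angle $\ta$ that leaf is $\ol{(\ta-\frac16)(\ta+\frac16)}$, i.e.\ $\ol{(\xi^-+\frac16)(\xi^-+\frac12)}$ in your notation.
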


\begin{proof}
Every leaf of $\lam_c$ that is not an edge of a gap is approximated by (pre)periodic leaves from both
sides, and every edge of a gap $G$ in $\lam_c$ is approximated by preperiodic leaves from outside of $G$.
Choose a neighborhood $W$ of $T$ in $\D$ whose boundary is formed by (pre)periodic leaves chosen close to the
edges of $T$, and appropriate circle arcs.
By Theorem \ref{t:gm}, there is a neighborhood $\Wc$ of $c$ in $\scp$ such that for $c^*\in \Wc$,
the leaves forming the boundary of $W$ in $\D$ are associated with cuts formed by the dynamical external rays of $p_{c^*}$
(it does not matter whether $J(p_{c^*})$ is connected or not).
The part of the dynamical plane of $p_{c^*}$ bounded by all these cuts contains the point $c^*$.
This implies the desired.
\end{proof}

\begin{theorem}\label{t:mis}
Let $\lam$ be a Misiurewicz lamination. 
Parameter rays whose arguments are vertices of the marked cocritical set of $\lam$
land on a Misiurewicz parameter $\hat c$ such that $\lam_{\hat c}=\lam$.
\end{theorem}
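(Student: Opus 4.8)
The plan is to follow the strategy of Lemma~\ref{l:landing1} and Theorem~\ref{t:samelam}(2), now for a cocritical set of preperiod at least two. Write $T$ for the marked critical set of $\lam$ and $H=\co(T)$ for its marked cocritical set, and let $\ta_1,\dots,\ta_m$ be the vertices of $H$; since $\lam$ is Misiurewicz these angles are strictly preperiodic of preperiod at least $2$. First I record a combinatorial point: because $\si_3$ is three--to--one and the three $\si_3$-preimages of $3\ta_j$ are $\ta_j$ and $\ta_j\pm\frac13$, of which two lie in the critical set $T$ and one (namely $\ta_j$) lies in $\co(T)=H$, we get $\ta_j\pm\frac13\in\V(T)$; the identical computation with $\tau(H)$ and $\tau(T)$ gives $\ta_j\pm\frac16\in\V(\tau(T))$. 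Hence for every $c\in\Rc_{\ta_j}$ the two critical leaves of $p_c$, with endpoints $\ta_j\pm\frac13$ and $\ta_j\pm\frac16$ respectively, are chords of the gaps $T$ and $\tau(T)$ of $\lam$. Since $\lam$ is Misiurewicz, $T$ and $\tau(T)$ are not periodic, so the orbit of any periodic $\sim_\lam$-class is disjoint from $T\cup\tau(T)$ and thus from both critical leaves.

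Fix $j$ and let $A_j\subseteq\partial\M^{sy}_3$ be the nonempty connected accumulation set of $\Rc_{\ta_j}$. The core step is to show each $c_0\in A_j$ is a Misiurewicz parameter with $\lam_{c_0}=\lam$ and marked cocritical set $H$. Being in $\partial\M^{sy}_3$, the parameter $c_0$ is not hyperbolic; by Lemma~\ref{c:only2} it is not parabolic either, since the only strictly preperiodic rays accumulating on a parabolic parameter are the characteristic rays, whose angles are $1$-preperiodic, while $\ta_j$ has preperiod at least $2$. Take a periodic $\sim_\lam$-class $T'$ whose $\si_3$-orbit contains an eventual image of $\ta_j$ (e.g.\ $T'=\si_3^kT$ for the preperiod $k$). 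By Lemma~\ref{l:forcing}, for every $c_n\in\Rc_{\ta_j}$ the dynamical rays of $p_{c_n}$ at the vertices of $T'$ land at a common point, necessarily a repelling periodic point since $c_n\notin\M^{sy}_3$. By Lemma~\ref{l:repel}(1), and because periodic rays of $p_{c_0}$ land on repelling or parabolic points while $c_0$ is not parabolic, the rays of $p_{c_0}$ at the vertices of $T'$ co-land at a repelling periodic point $w_0$. With $w_0$ available, Theorem~\ref{t:gm} applies to the rational angle $\si_3(\ta_j)$ (an eventual $\si_3$-image of which lies in $\V(T')$): \emph{if} the rays in the $p_{c_0}$-orbit of $R_{\si_3(\ta_j)}(c_0)$ were all smooth and avoided the critical points, then $R_{\si_3(\ta_j)}(c)$ would be smooth for all $c$ near $c_0$. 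But for $c\in\Rc_{\ta_j}$ the ray $R_{\si_3(\ta_j)}(c)=p_c\bigl(R_{\ta_j+\frac13}(c)\bigr)$ carries the escaping critical value $-2c^3=p_c(c)$, because $R_{\ta_j+\frac13}(c)$ crashes at the marked critical point $c$; hence it is not smooth, and since such $c$ approach $c_0$ this is a contradiction. Therefore some ray in the forward $p_{c_0}$-orbit of $R_{\si_3(\ta_j)}(c_0)$ lands on a critical point of $p_{c_0}$; having rational argument and $c_0\in\M^{sy}_3$, that critical point is preperiodic, so the critical orbit of $p_{c_0}$ is strictly preperiodic and $p_{c_0}$ is Misiurewicz. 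Its Julia set is then a dendrite and $\lam_{c_0}$ is defined; applying Lemma~\ref{l:dendr} to $c_n\in\Rc_{\ta_j}\to c_0$ shows that the angles $\ta_j\pm\frac13$ of the two rays crashing at the marked critical point are vertices of the marked critical set $T_{c_0}$ of $\lam_{c_0}$, so $\ta_j\in\V(\co(T_{c_0}))$. Since $\ta_j$ belongs to a single $\sim_{sy}$-class, this identifies $\co(T_{c_0})$ with $H$ and $\lam_{c_0}$ with $\lam$.

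To finish, by the rigidity of post-critically finite polynomials (Thurston's uniqueness theorem; cf.\ the use of \cite{poi92} in Theorem~\ref{t:center1}) there is at most one Misiurewicz parameter in $\scp$ with marked lamination $\lam$ and marked cocritical set $H$. Hence $A_j$ reduces to a single point $\hat c$, and $\hat c$ is independent of $j$ because all the $\ta_j$ are vertices of $H$ — equivalently, $-2\hat c$ is the common landing point of $R_{\ta_1}(\hat c),\dots,R_{\ta_m}(\hat c)$. Thus every parameter ray $\Rc_{\ta_j}$ lands at $\hat c$, and $\lam_{\hat c}=\lam$, as required.

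I expect the principal obstacle to be the middle paragraph: turning the limiting coincidence of periodic rays supplied by Lemma~\ref{l:forcing} and Lemma~\ref{l:repel} into the genuine conclusion that $p_{c_0}$ is Misiurewicz — in particular, ruling out that $c_0$ is a Cremer or Siegel parameter. This rests on a careful treatment of the escaping critical value $-2c^3$ along $\Rc_{\ta_j}$ as $c\to c_0$ (via continuity of the Böttcher coordinate $\Psi$ on $\C\sm\M^{sy}_3$ together with Theorem~\ref{t:gm}) and on the fact that, for $c_0\in\M^{sy}_3$, a non-smooth rational ray must land on a (pre)critical point.
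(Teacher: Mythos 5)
The heart of your argument — the claim that for $c\in\Rc_{\ta_j}$ the ray $R_{\si_3(\ta_j)}(c)$ is \emph{not smooth} because it carries the escaping critical value $-2c^3$ — is false, and this breaks the contradiction you derive. Passing through a critical \emph{value} does not make a ray non-smooth: $-2c^3$ is a regular point of the Green function $g_c$ (the critical points of $g_c$ are exactly the critical points of $p_c$ and their iterated $p_c$-preimages, none of which equals $-2c^3$ since $c$ escapes). A ray fails to be smooth only when it hits such a (pre)critical point, i.e.\ when some forward $\si_3$-image of its argument equals one of $\ta_j\pm\frac13,\ \ta_j\pm\frac16$. Here that never happens: each of $\ta_j\pm\frac13,\ \ta_j\pm\frac16$ has preperiod exactly $k$ (since $\si_3$ of each equals an angle of preperiod $k-1\ge1$, hence not periodic), whereas every angle $\si_3^m(\ta_j)$ with $m\ge1$ has preperiod at most $k-1$. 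So $R_{\si_3(\ta_j)}(c)$ \emph{is} smooth for all $c\in\Rc_{\ta_j}$, and no contradiction arises. Note the contrast with Lemma~\ref{l:landing1}, where $\ta$ is $1$-preperiodic: there $\si_3(\ta)$ is periodic, its cycle does contain one of $\ta\pm\frac13$, and that ray really does crash at $c$; that argument does not transfer to preperiod $\ge2$, which is precisely the case Theorem~\ref{t:mis} is about.

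Because the dichotomy does not collapse, you cannot conclude that ``some ray in the forward $p_{c_0}$-orbit of $R_{\si_3(\ta_j)}(c_0)$ lands on a critical point.'' The correct move — and essentially what the paper's proof does — is to use Theorem~\ref{t:gm} positively rather than by contradiction: either some ray in that forward orbit does land on a critical point of $p_{c_0}$ (then, having rational argument, that critical point is preperiodic and $p_{c_0}$ is Misiurewicz), or the hypotheses of Theorem~\ref{t:gm} hold, $R(c)$ varies continuously near $c_0$, and tracking the cocritical point $-2c$ on $R_{\ta_j}(c)$ (whose Green level $g_c(c)\to0$ as $c\to c_0$) forces $R_{\ta_j}(c_0)$ to land on $-2c_0$; then $p_{c_0}^k(c_0)=p_{c_0}^k(-2c_0)$ is the landing point $z_0$ of $R_{3^k\ta_j}(c_0)$, repelling and periodic, so again $p_{c_0}$ is Misiurewicz. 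Everything else in your write-up is sound and in the spirit of the paper: the identification of the critical leaves as chords of $T$ and $\tau(T)$, the applications of Lemmas~\ref{l:forcing} and \ref{l:repel}, the exclusion of parabolic $c_0$ via Lemma~\ref{c:only2}, and the use of Lemma~\ref{l:dendr} at the end. Your uniqueness step via Poirier/Thurston rigidity is a valid alternative to the paper's countability-of-Misiurewicz-parameters argument, which deduces single-point landing from the fact that the accumulation set of a ray is connected.
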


\begin{proof}
Let $\ta$ be a $k$-preperiodic angle with $k>1$.
Choose $c_0\in\partial \M^{sy}_3$ in the accumulation set of $\Rc_\theta$.
Then the periodic dynamical external ray $R_{3^k\theta}(c_0)$ lands on a periodic point $z_0$.
By Lemma \ref{c:only2}, the point $z_0$ is repelling.
For a parameter $c$, consider
the union $R(c)$ of the rays from the forward orbit of the closure of $R_\ta(c)$;
it consists of finitely many rays and their landing points.
By Lemma \ref{l:dendr},
there are no critical points among their landing points provided that $c$ is close to $c_0$.

By Theorem \ref{t:gm}, for some neighborhood $\Wc$ of $c_0$ in $\scp$, 
 the set $R(c)$ depends continuously on $c\in\Wc$,
consists only of smooth rays and their landing points,
and contains no critical points of $p_c$. In particular, $R_\ta(c_0)$ lands on the cocritical point $-2c_0$.
Thus, $p_{c_0}$ is a symmetric Misiurewicz polynomial, and $\lam_{c_0}$ is a symmetric Misiurewicz lamination.
Since there are countably many symmetric Misiurewicz polynomials,
 and the accumulation set of $\Rc_\ta$ is either a non-degenerate
continuum (hence uncountable) or a point, it follows that $\Rc_\ta$ lands on $c_0$ and that $\ta$ is
 a vertex of a cocritical set of $\lam_{c_0}$.

By Theorem \ref{t:ccl}, all preperiodic angles of preperiod $>1$ are partitioned into
 vertex sets of various Misiurewicz cocritical sets.
In particular, for a preperiod $>1$ angle $\ta$ 
there exists a unique symmetric lamination $\lam$ such that $\ta$ is a vertex of a cocritical set of $\lam$.
Taking into account the fact that each polynomial is counted in $\scp$ twice (depending on the choice of the marked critical point),
and choosing marked cocritical sets accordingly, we see that if $\lam$ is a Misiurewicz lamination
 whose marked cocritical set has vertices $\ta_1$, $\dots$, $\ta_m$, then the parameter rays
$\Rc_{\ta_1},$ $\dots,$ $\Rc_{\ta_m}$ land on a Misiurewicz parameter $\hat c$ such that $\lam_{\hat c}=\lam$.
\end{proof}

\section{The Structure of $\M^{sy}_3$}

Recall that $\M^{sy}_{3, comb}$ is the factor space of $\cdisk$ under $\sim_{sy}$ and
$\pr_{comb}:\cdisk\to \M^{sy}_{3, comb}$ is the corresponding quotient map (see Definition \ref{d:ccl}).
Given a continuum $K\subset \C$, define the \emph{topological hull of $K$} as the complement to the unbounded
complementary component of $K$. Recall also, that a \emph{monotone} map is a map whose point-preimages (fibers) are connected.

\begin{theorem}\label{t:main}
There exists a monotone continuous surjective map $\pi: \M^{sy}_3\to \M^{sy}_{3, comb}$.
If $\M^{sy}_3$ is locally connected, then $\pi$ is a homeomorphism.
\end{theorem}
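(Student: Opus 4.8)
The plan is to build $\pi$ ray-by-ray, using the landing results already established, and then verify monotonicity and continuity via the standard Carathéodory-loop / fiber argument, following the template for the quadratic Mandelbrot set. First I would define $\pi$ on the complement of $\M^{sy}_3$ implicitly: for $c\notin\M^{sy}_3$ we have the conformal coordinate $\Psi$ (Theorem~\ref{t:connect}), so radial parameter rays $\Rc_\ta$ foliate $\C\sm\M^{sy}_3$, and their landing behaviour on $\partial\M^{sy}_3$ is controlled by the theorems of Section~6 and Section~7: periodic/$1$-preperiodic arguments land on parabolic parameters (Lemmas~\ref{l:landing1}, \ref{c:only2}, Theorems~\ref{t:paraland}, \ref{t:mainpoly}), and preperiod $>1$ arguments land on Misiurewicz parameters (Theorem~\ref{t:mis}). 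In each of these cases the landing parameter $c$ carries a lamination $\lam_c$, and the arguments landing together are exactly the vertices of the marked comajor or marked cocritical set of $\lam_c$, i.e. they project to a single point of $\M^{sy}_{3,comb}=\cdisk/\sim_{sy}$ under $\pr_{comb}$. This is the crucial compatibility: the combinatorial identifications made by $\sim_{sy}$ are precisely those forced by coincidence of parameter-ray landing points.

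Next I would define $\pi$ on all of $\M^{sy}_3$. The standard device is to define, for $w\in\M^{sy}_3$, the \emph{impression} / fiber of $w$ as the set of angles $\ta$ such that $w$ lies in the accumulation set of $\Rc_\ta$, together with angles in ``gaps'' between them, and to show this collection of angles is a union of $\sim_{sy}$-classes; then $\pi(w):=\pr_{comb}$ of that set of angles. Concretely, I would argue: (i) for every $w\in\partial\M^{sy}_3$ the set of preperiodic/periodic angles whose rays accumulate on $w$ is closed under the forcing relations built into $C_sCL$, because Theorems~\ref{t:paraland} and \ref{t:mis} say each $1$-preperiodic comajor and each Misiurewicz cocritical set gives a genuine cut of the parameter plane (two rays landing at a common point), so these cuts separate the plane consistently with $\sim_{sy}$; (ii) hence the angle-fiber of $w$ lies inside a single $\sim_{sy}$-class or inside the closure of a single gap of $C_sCL$, giving a well-defined point $\pi(w)\in\M^{sy}_{3,comb}$; (iii) on the interior (hyperbolic components, via the multiplier maps $r\hspace{-2pt}\rho_\H$, $r\hspace{-2pt}\tro_\H$ of Theorems~\ref{t:preperiodiccomponents}, \ref{t:preperiodiccomponents2}, and the main component $\H_{main}$ via Theorem~\ref{t:maingap}/\ref{t:mainpoly}) each hyperbolic component maps to the interior of the corresponding gap-domain of $\cdisk/C_sCL$, matching the boundary identifications already described. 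Surjectivity of $\pi$ then follows because every $\sim_{sy}$-class is of one of the three types in Theorem~\ref{t:ccl} (plus the infinite gaps of Theorems~\ref{t:maingap}, \ref{t:cardio}), and each such combinatorial object has been matched to an actual parabolic, Misiurewicz, or hyperbolic parameter.

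For continuity and monotonicity I would use the compactness of $\M^{sy}_3$ (Theorem~\ref{t:connect}) together with the density statements: $1$-preperiodic comajors are dense in $C_sCL$ (Theorem~\ref{t:ccl}(3)) and the corresponding parameter rays land densely on $\partial\M^{sy}_3$ (remark after Theorem~\ref{t:samelam}). Monotonicity $\pi^{-1}(x)$ connected: a fiber is the topological hull of an impression bounded by finitely-or-countably-many landing cuts, and by the stability results (Theorem~\ref{t:gm}) these cuts vary consistently, so each fiber is a nested-intersection of connected full sets, hence connected; equivalently, the complement of $\pi^{-1}(x)$ in $\M^{sy}_3$ is a union of pieces each cut off by a parameter-ray pair, so $\pi^{-1}(x)$ is connected. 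Continuity then follows formally: a monotone surjection onto a metrizable space whose point-preimages are the fibers of an upper-semicontinuous decomposition is continuous, and upper semicontinuity here is exactly the statement that landing points move continuously as angles vary outside the exceptional (precritical) set, which is Theorem~\ref{t:gm} again. Finally, if $\M^{sy}_3$ is locally connected, then every external parameter ray lands, the impressions are single $\sim_{sy}$-classes (no room for nondegenerate fibers beyond what $\sim_{sy}$ prescribes), so $\pi$ is a continuous bijection between compacta, hence a homeomorphism.

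The main obstacle I expect is step (ii): proving that the angle-fiber of an arbitrary $w\in\M^{sy}_3$ — not just a parabolic or Misiurewicz parameter — is contained in a single $\sim_{sy}$-class (or the closure of one gap). For ``nice'' $w$ this is Theorems~\ref{t:paraland}, \ref{t:mis}, \ref{c:only2}; but for a general boundary point (e.g. a non-locally-connected spot of $\M^{sy}_3$, or an infinitely-renormalizable parameter) one must show the only way two parameter rays can co-accumulate on $w$ is if they are separated by \emph{no} $1$-preperiodic comajor-cut, and then invoke density of those cuts (Theorem~\ref{t:ccl}(3)) to conclude the co-accumulating angles lie in the closure of one gap of $C_sCL$. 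The delicate point is handling the periodic (non-preperiodic) angles on the boundary of such a gap: they are degenerate comajors, and one must show their parameter rays accumulate exactly where Theorem~\ref{t:cardio} and Theorem~\ref{t:maingap} predict. I would isolate this as a separate lemma — ``parameter-ray cuts at $1$-preperiodic angles separate $\M^{sy}_3$ consistently with $\sim_{sy}$, and are dense'' — and derive both well-definedness of $\pi$ and its monotonicity from it.
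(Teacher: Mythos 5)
Your proposal is in the same spirit as the paper's proof --- both build the model from parameter-ray impressions and reduce to the combinatorial landing theorems of Sections 5--7 --- but it omits the key black box that makes the ``soft'' topological part go through: the general construction of Blokh--Curry--Oversteegen \cite{bco11}. In the paper, one does not try to define $\pi$ point-by-point and then verify that the fibers are nice. Instead, one defines on $\uc$ the equivalence relation $\approx$ generated by ``impressions of parameter rays intersect'', takes its closure, and invokes Theorem~1 and Lemma~13 of \cite{bco11} to conclude that $\approx$ is laminational, that the union of impressions over a $\approx$-class is a continuum, and that collapsing the topological hulls of those continua yields a monotone surjection $\Psi$ with $\Psi(\M^{sy}_3)=\Upsilon(\cdisk)=\cdisk/\approx$. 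This package delivers monotonicity, continuity, and the optimality statement (hence the local connectivity corollary) for free, once one also proves $\approx\,=\,\sim_{sy}$ by a case analysis on the types of $\sim_{sy}$-classes, using exactly Theorems~\ref{t:ccl}, \ref{t:cardio}, \ref{t:center1}, \ref{t:paraland}, and \ref{t:mainpoly}.

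The ``separate lemma'' you correctly identify as the crux of your approach is not easily supplied by hand. For an arbitrary $w\in\partial\M^{sy}_3$ that is neither parabolic nor Misiurewicz (say, infinitely renormalizable, or sitting on a hypothetical non-locally-connected piece), you do not know a priori that the union of impressions of the co-accumulating angles is connected, or that the resulting ``fiber'' projects to a single point of $\M^{sy}_{3,comb}$; your plan to deduce this from density of $1$-preperiodic cuts gives that the fiber is squeezed between such cuts, but not that it is a single $\sim_{sy}$-class nor that the resulting decomposition is upper semi-continuous. Likewise, your monotonicity argument (``the fiber is a nested intersection of connected full sets'') tacitly assumes connectedness of the union of impressions over a class, which is one of the nontrivial outputs of \cite{bco11}. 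So the proposal is not wrong in outline, but the gap you flag is real, and the paper fills it by appeal to \cite{bco11} rather than by a direct argument; without that citation your proof is incomplete at exactly the step you predicted would be hard.
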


\begin{proof}
We use a general construction from \cite{bco11} (for basic continuum theory facts
see, e.g., \cite{nad92}). Associate to each angle $\ta$ the \emph{impression} of the parameter ray $\Rc_\ta$
(abusing the terminology we will call them \emph{impressions of angles $\ta$}).
Declare two angles equivalent if their impressions are non-disjoint, extend this equivalence relation on $\uc$
by transitivity, and then consider the  closure of  the thus constructed equivalence relation. The resulting equivalence relation
$\approx$ on $\uc$ is laminational. Extend it onto the entire $\cdisk$ and then on $\C$ by declaring all points in convex hulls of
$\approx$-classes equivalent, and otherwise ---
 i.e., for all points $z$ not in convex hulls of $\approx$-classes --- declaring
that $z$ is only equivalent to $z$. Keeping the notation $\approx$ for the new equivalence relation (now defined on $\C$)
we obtain a monotone quotient map $\Upsilon:\C\to \C/\approx$, where $\C/\approx$ is homeomorphic to $\C$
by Moore's Theorem.

Define another map $\Psi:\C\to \C$ as follows.
For every $\approx$-class $\mathbf g$, take the union of all impressions
 of angles from that class; by Theorem 1 (see also Lemma 13) of \cite{bco11} 
 this is a continuum, say, $K_{\mathbf g}$.
Define $\Psi:\C\to\C$ as a monotone surjective map whose fibers are
 the topological hulls $\thl(K_{\mathbf g})$ and singletons.
In other words, topological hulls of the sets $K_{\mathbf g}$ are collapsed to points and the rest of the plane is
 mapped forward homeomorphically.
By Theorem 1 of \cite{bco11}, 
 the map $\Psi$ 
 is well defined, monotone and onto;
 moreover, we will show that there is a homeomorphic identification between the range of $\Psi$ and the range of $\Upsilon$,
 under which $\Psi(\M^{sy}_3)=\Upsilon(\cdisk)$, and so
$\Upsilon(\cdisk)$ can be viewed as a monotone model of $\M^{sy}_3$.
This model is optimal in the sense that any other monotone image of $\M^{sy}_3$ is a monotone image of $\Upsilon(\cdisk)$. 
In particular, if $\M^{sy}_3$ 
is locally connected, 
 then $\M^{sy}_3$ and $\Upsilon(\cdisk)$ are homeomorphic.

Thus, it suffices to show that $\Upsilon(\cdisk)$ and $\M^{sy}_{3, comb}$ are the same, i.e., that
the equivalence relation $\approx$ on $\uc$ coincides with $\sim_{sy}$.
By Theorem \ref{t:ccl}, if $\mathbf g$ is a $\sim_{sy}$-class whose convex hull does not intersect an infinite gap of $C_sCL$,
 then $\mathbf{g}$ is in fact a $\approx$-class.
It remains to consider $\sim_{sy}$-classes $\hb$ whose convex hulls intersect infinite gaps of $C_sCL$. In what follows we use notation and terminology
from right before Theorem \ref{t:cardio}. In particular, recall that $\Oc$ is the center of $\cdisk$.

Let $\hb$ be a $\sim_{sy}$-class whose convex hull $\ch(\hb)$ intersects $\partial G$,
 where $G$ is an infinite gap of $C_sCL$.
Suppose that $G$ does not contain $\Oc$.
By Theorems \ref{t:ccl} and \ref{t:cardio}, there is a Fatou lamination
$\lam$ with critical Fatou gap $U$ and 
 the (half-)return map $\eta:\partial U\to \partial U$
semiconjugate to $\si_2:\uc\to \uc$ by a map $\phi$ collapsing all edges of $U$.
The gap $G$ lies in $U$, and the $\phi$-images of the edges of $G$ are
 the majors of laminations from the Main Cardioid of $\si_2$.

Let $\ell$ be the edge of $G$ separating all other edges of $G$ from $\Oc$.
By the properties of the Main Cardioid,
 another infinite gap of $C_sCL$ is attached to $G$ at any edge of $G$ different from $\ell$
 while the points of $\partial G$ that are not endpoints of edges of $G$ are themselves $C_sCL$-classes.
By Theorem \ref{t:ccl}, the leaf $\ell$ may be an edge of another infinite gap of $C_sCL$
or it may be non-isolated in $C_sCL$. 

By Theorem \ref{t:center1}, there is a unique hyperbolic component $\Hc$ such that $\co^+_\H=\ell$.
A dense subset of $\partial \Hc$ consists of parabolic parameters $c$
 with two parameter rays landing on $c\in \partial \Hc$ corresponding to the endpoints of an edge of $G$,
 by Theorem \ref{t:paraland}.
Properties of impressions and the fact that $\partial \H$ is a Jordan curve imply that for all edges of $G$ other than $\ell$
the desired claim (that the corresponding $\uc/\approx$-class and $\sim_{sy}$-class coincide) holds.

The situation with $\ell$ is different.
First, it may be that $\ell$ is an edge of another infinite gap of $C_sCL$ not containing $\Oc$.
Then the above arguments show that the $\uc/\approx$-class associated with $\ell$
consists only of the endpoints of $\ell$,
as desired.
Second, $\ell$ may be the limit of other edges of $C_sCL$ converging to $\ell$ from outside of $G$.
In the latter case, let $\Gamma_\ell$ be the parameter cut corresponding to $\ell$.
By Theorem \ref{t:ccl}, 
 the cut $\Gamma_\ell$ is approximated by 1-preperiodic parabolic cuts.
These cuts separate $\Gamma_\ell$ from impressions of other parameter rays in
 the complementary component of $\Gamma_\ell$ not containing $\H$,
 and imply that in this case, too, the $\uc/\approx$-class and the corresponding $\sim_{sy}$-class coincide.

The remaining case is when $\ell$ is the shared edge of $G$ and the gap $G_{main}$ of $C_sCL$ corresponding to $\Hc_{main}$.
Recall that $\partial \Hc_{main}$ is the circle of radius $\frac{\sqrt{3}}{3}$ centered at the origin.
For $c\in \partial \Hc_{main}$,
the multiplier at the neutral point $0$ is $-3c^2$. Hence there is a dense in $\partial \Hc_{main}$ set of parabolic parameters
$c$ associated to the comajors like $\ell$ above. The arguments from above show that for all edges of $G_{main}$ as well as for
all points of $\partial G_{main}$ that are not endpoints of an edge of $G_{main}$ the same conclusion holds: the $\uc/\approx$-classes and the $\sim_{sy}$-classes
are the same.
\end{proof}


\begin{thebibliography}{A}

\bibitem{BFH92}
 B. Bielefeld, Y.Fisher, J. Hubbard, \emph{The Classification of Critically Preperiodic
Polynomials as Dynamical Systems}, Journal of the AMS \textbf{5} (1992), 721--762.

\bibitem{bco11} A. Blokh, C. Curry, L. Oversteegen,
\emph{Locally connected models for Julia sets},
Advances in Mathematics, \textbf{226} (2011), 1621--1661.

\bibitem{BCLOS16} A. Blokh, D. Childers, G. Levin, L. Oversteegen, D. Schleicher,
An Extended Fatou-Shishikura inequality and wandering branch continua for polynomials,
\emph{Advances in Mathematics}, \textbf{288} (2016), 1121--1174.

\bibitem{bmov13} A. Blokh, D. Mimbs, L. Oversteegen, K. Valkenburg,
    \emph{Laminations in the language of leaves}, Trans. Amer. Math. Soc.,
    \textbf{365} (2013), 5367--5391.

\bibitem{bopt16} A. Blokh, L. Oversteegen, R. Ptacek, V. Timorin,
\emph{Laminations from the Main Cubioid,}
Discrete and Continuous Dynamical Systems - Series A, \textbf{36} (2016), 4665--4702.

\bibitem{paper1} A. Blokh, L. Oversteegen, N. Selinger, V. Timorin, and S. Vejandla, \emph{Symmetric cubic laminations}, arXiv:2201.11434 (2022).

\bibitem{paper2} A. Blokh, L. Oversteegen, N. Selinger, V. Timorin, and S. Vejandla, \emph{Lavaurs algorithm for cubic symmetric polynomials}, arXiv:2202.06734 (2022),
to appear in Conformal Geometry and Dynamics.

\bibitem{botw23} A. Blokh, L. Oversteegen, V. Timorin, Y. Wang, \emph{A model of the cubic connectedness locus},
	arXiv:2304.11516 (2023)

\bibitem{slices}
{A.~Blokh, L.~Oversteegen, and V.~Timorin},
\emph{Slices of parameter space of cubic polynomials}, 
 Trans. Amer. Math. Soc. \textbf{375}:8 (2022), 5313--5359.

\bibitem{Cal}
D. Calegari, \emph{Saucages and Butcher Paper}, arxiv: 2105.11265 (2022).

\bibitem{dMM}
L. DeMarco, C. McMullen, \emph{Trees and the dynamics of polynomials},
 Ann. Sci. \'Ecole Norm. Sup. (4) \textbf{41} (2008), no. 3, 337--383.

\bibitem{DH} A. Douady, J. H. Hubbard, \emph{\'Etude dynamique des polyn\^omes complexes. Partie I.}
Publications Mathématiques d'Orsay, 84-2. 1984. 75 pp.

\bibitem{dh93} A. Douady, J. Hubbard, \emph{A proof of Thurston's topological characterization of rational functions,} Acta Math.
\textbf{171} (1993), no. 2, 263--297.

\bibitem{GM93} L. Goldberg, J. Milnor, \emph{
Fixed points of polynomial maps. II. Fixed point portraits},
Ann. Sci. \'Ecole Norm. Sup. (4) \textbf{26} (1993), no. 1, 51--98.

\bibitem{hai00} P. Ha\"{i}ssinsky, \emph{D\'{e}formation J-\'{e}quivalente de polyn\^{o}mes g\'{e}ometriquement finis,}
Fund. Math. \textbf{163} (2000), no. 2, 131--141.

\bibitem{IK12}
H. Inou, J. Kiwi, \emph{Combinatorics and topology of straightening maps I: Compactness and bijectivity},
 Adv. Math. \textbf{231} (2012), 2666--2733. 

\bibitem{Kiw00}  J. Kiwi, \emph{Non-accessible critical points of Cremer polynomials}, Ergodic Theory Dynam. Systems
\textbf{20} (2000), no. 5, 1391--1403.

\bibitem{kiwi02} J. Kiwi, \emph{Wandering orbit portraits}, Trans. Amer. Math. Soc., \textbf{354} (2002), 1473--1485.

\bibitem{TL-root}  Tan Lei, \emph{Local properties of the Mandelbrot set at parabolic points.}
The Mandelbrot set, theme and variations,
133--160, London Math. Soc. Lecture Note Ser., 274, Cambridge Univ. Press, Cambridge,  2000.

\bibitem{Lyubich} M. Lyubich, \emph{Conformal Geometry and
Dynamics of Quadratic Polynomials, vol I-II}, in preparation. http://www.math.stonybrook.edu/~mlyubich/book.pdf

\bibitem{MSS83} R. Ma\~n\'e, P. Sad, D. Sullivan, \emph{On the dynamics of rational maps},
Ann. Sci. \'Ecole Norm. Sup. (4) \textbf{16} (1983), no. 2, 193--217.

\bibitem{mil93} J. Milnor, \emph{Geometry and dynamics of quadratic rational maps,} Experimental Math., \textbf{2} (1993),
37--83.

\bibitem{Milnor-exp} J. Milnor, \emph{Periodic orbits, externals rays and the Mandelbrot set:
an expository account.} G\`eom\`etrie complexe et syst\`emes dynamiques (Orsay, 1995).
 Ast\`erisque  No. 261,  (2000), xiii, 277--333.

\bibitem{Milnor} J. Milnor, \emph{Dynamics in One Complex Variable},
Annals of Mathematical Studies \textbf{160}, Princeton (2006).

\bibitem{mil09} J. Milnor, \emph{Cubic polynomial maps with periodic critical orbit I,} in: Complex Dynamics,
Families and Friends, ed. D. Schleicher, A.K. Peters (2009), 333--411.

\bibitem{poi92} A. Poirier, \emph{On postcritically finite polynomials},  Fund. Math.
\textbf{203} (2009), no. 2, 107--163.


\bibitem{nad92} S. Nadler,\emph{Continuum theory. An introduction,} Monographs and Textbooks in Pure and Applied Mathematics,
\textbf{158}, Marcel Dekker, Inc., New York, 1992.

\bibitem{Roesch} P.~Roesch, Y.~Yin,
\emph{Bounded critical {F}atou components are {J}ordan domains for polynomials},
Sci. China Math. \textbf{65} (2022), 331--358.

\bibitem{DSrays} D. Schleicher, \emph{Rational parameter rays of the Mandelbrot set.}
G\`eom\`etrie complexe et syst\`emes dynamiques (Orsay, 1995). Ast\`erisque  No. 261
(2000), xiv--xv, 405--443.

\bibitem{DSfibers} D. Schleicher, \emph{On fibers and local connectivity of Mandelbrot
and Multibrot sets.}  Fractal geometry and applications: a jubilee of Benoît Mandelbrot. Part 1,
477--517, Proc. Sympos. Pure Math., \textbf{72}, Part 1, Amer. Math. Soc., Providence, RI,  2004.

\bibitem{Sul85} D. Sullivan, \emph{Quasiconformal homeomorphisms and dynamics. I. Solution of the Fatou-Julia problem on wandering domains.},
Ann. of Math. (2) \textbf{122} (1985), no. 3, 401--418.

\bibitem{Th} W. P. Thurston, \emph{On the geometry and dynamics of iterated rational maps.}
Edited by Dierk Schleicher and Nikita Selinger and with an appendix by Schleicher.
 Complex dynamics, 3--137, A K Peters, Wellesley, MA,  2009.

\bibitem{Vej21} S. Vejandla, \emph{Cubic symmetric laminations}, PhD
Thesis (2021), University of Birmingham at Alabama.

\end{thebibliography}
\end{document}